\newcommand{\Z}{{\mathbb Z}}
\newcommand{\R}{{\mathbb R}}
\newcommand{\C}{{\mathbb C}}
\newcommand{\sn}{\ensuremath\operatorname{sn}}
\newcommand{\cn}{\ensuremath\operatorname{cn}}
\newcommand{\dn}{\ensuremath\operatorname{dn}}
\newtheorem{theorem}{Theorem}[section]
\newtheorem{proposition}[theorem]{Proposition}
\newtheorem{lemma}[theorem]{Lemma}
\newtheorem{definition}[theorem]{Definition}
\newenvironment{remark}{\par\noindent{\em Remark.} 
\normalfont}{\par}
\newlength{\subfigureheight}
\title[Spherical and hyperbolic orthogonal ring patterns]{Spherical and hyperbolic orthogonal ring patterns: integrability and variational principles}
\author{Alexander I.~Bobenko}
\address{Institute of Mathematics, Secr. MA 8-3, TU Berlin, 10623 Berlin, Germany\vspace{-.5em}}
\address{Email: {\normalfont bobenko@math.tu-berlin.de}}
\keywords{
  circle patterns,
  variational principles,
   discrete differential geometry,
  discrete integrable equations
  }
  \date{\today}
\begin{document}
\begin{abstract}
  We introduce orthogonal ring patterns in the 2-sphere and in the hyperbolic plane, consisting of pairs of concentric circles, which generalize circle patterns. We show that their radii are described by a discrete integrable system. This is a special case of the master integrable equation Q4. The variational description is given in terms of elliptic generalizations of the dilogarithm function. They have the same convexity principles as their circle-pattern counterparts. This allows us to prove existence and uniqueness results for the Dirichlet and Neumann boundary value problems. Some examples are computed numerically. In the limit of small smoothly varying rings, one obtains harmonic maps to the sphere and to the hyperbolic plane. A close relation to discrete surfaces with constant mean curvature is explained.
  \end{abstract}
\maketitle


\section{Introduction}
\label{sec:introduction}


The notion of discrete conformality is quite well developed. Circle patterns, discrete conformal maps, hyperbolic polyhedra, and their relationship are now a well-established and flourishing theory. Modern interest in this subject began with Thurston’s 
conjecture that circle packings could be used to approximate the classical Riemann map. Subsequently,  Rodin’s and Sullivan’s proof \cite{RoSu87} set off a flurry of research on discrete analytic functions and analytic maps based on packings and patterns of circles \cite{St05}. In particular, it was shown that on regular grids they are described by integrable systems \cite{Sch97}, and converge with all derivatives \cite{HeSch98}. Circle patterns on surfaces can be described variationally by convex functionals given by volumes of ideal hyperbolic polyhedra \cite{BSp04}. The corresponding theory of discrete minimal surfaces based on a discrete Weierstrass representation was developed in \cite{BHoSp06}.

In parallel, another approach to discrete conformal equivalence of triangulated piecewise Euclidean surfaces appeared: the discrete conformal metric change consists in multiplying all edge lengths by scale factors associated with the vertices \cite{Lu04}. Similar to the circle patterns, this theory also possesses convex variational principles \cite{SpSchPi08}, hyperbolic interpretation \cite{BPiSp15}, allowing investigation of global properties. It culminated in discrete uniformization theorems  \cite{GLSW18, GGLSW18, Sp20}. 

Conformal mappings are angle preserving, and are used for texture mappings in computer graphics because of this geometric property \cite{SpSchPi08, GiSpCr21, Cr20}. On the other hand, they are too rigid and do not allow local variations. Therefore, there has been a search for possible practical generalizations of conformal maps, see for example  \cite{CKPPS21}. In particular, there have been attempts to include anisotropy by using quasiconformal maps for this purpose. Since the theory of discrete quasiconformal maps is still missing, only numerical discretizations  have been applied \cite{WeMyZo12, LiKiFu09}.

There are numerous indications that the problems of structure-preserving discretizations of all these and other similar systems can be successfully studied in the framework of discrete differential geometry. For some of them, structured discrete models have recently been found. 
 By replacing the vertices of a polyhedral surface by circles, one obtains decorated discrete conformal maps \cite{BLu23, BLu24}. The classical theory then appears in the limit of infinitesimal circles.
The notion of decorated discrete conformal maps is closely related to inversive distance circle patterns \cite{BoSt04}, duality structures \cite{Gl11}, and unified discrete Ricci flow \cite{ZGZLYX14}. 
 Characteristic properties of all these theories that make them accessible for research are:
\begin{itemize}
\item a geometric description, often with a more transparent structure than in the smooth case,
\item a local analytic description in terms of geometric variables, which is usually an integrable system on a quad-graph, 
\item a variational (Lagrangian) description by a functional with similar convexity properties as in the smooth case,
\item  global existence and uniqueness results.
\end{itemize} 

Recently orthogonal ring patterns in the plane with the combinatorics of the square grid were introduced in \cite{BHoRo24}. The generalization from circles to rings introduces a new parameter into the theory, which is the area of a ring. Circle patterns appear then in a limit of ring patterns, when this parameter goes to zero. 

In this paper, we study orthogonal ring patterns in the sphere and in the hyperbolic plane. While the analytic descriptions of orthogonal ring and circle patterns in the plane are quite close, the situation with ring patterns in the sphere and in the hyperbolic plane turns out to be much more intriguing. We develop a theory of these ring patterns that has all the characteristic properties of an integrable geometric theory discussed above.

 A ring is a pair of two concentric circles that form an annulus. Two rings intersect orthogonally if the larger circle of one ring orthogonally intersects the smaller circle of the other and vice versa. In Section~\ref{sec:ring_patterns} based on these simple geometric properties we obtain a uniformization of the inner and outer radii of the rings by elliptic Jacobi functions and derive fundamental equations describing the ring patterns, (Theorem~\ref{thm:beta-equations}). Note that in contrast to the conformal character of orthogonal circle patterns, orthogonal ring patterns are not preserved by M\"obius transformations. Moreover, they are anisotropic: inner and outer circles of the rings touch in two different combinatorial directions.

The corresponding results for orthogonal ring patterns in the hyperbolic plane are presented in Section~\ref{sec:hyperbolic_ring_patterns}.  The circle pattern limit is described in Section~\ref{sec:cp_limit}.

The variational description plays a crucial role in the theory of circle patters \cite{CdV91, BSp04}. It is given, analytically, in terms of dilogarithm functions, and, geometrically, in terms of volumes of ideal hyperbolic polyhedra, the circle patters being closely related to \cite{Ri96}. 
A variational description of ring patterns  in terms of elliptic generalizations of the dilogarithm function is given in Section~\ref{sec:variational}. The functionals have the same convexity properties as their circle pattern counterparts. This allows us to study classical boundary value problems in Section~\ref{sec:computation}. In particular, we obtain the existence and uniqueness results (Theorem~\ref{thm:BVP_hyperbolic}) in the hyperbolic plane case using the convexity of the corresponding functional. In the spherical case the functional is not convex, which makes the treatment more complicated. For the computations we use a min-max principle, and prove the existence for ring patterns closed to the circle patterns, see Theorem~\ref{thm:circle-ring_deformation}.

In Section~\ref{sec:integrable} we show that spherical and hyperbolic orthogonal ring patterns are described by an integrable system, which is a special case of the Q4-equation \cite{Ad98}. The latter is an equation with an elliptic spectral parameter, and is the master discrete integrable equation in the classification of \cite{AdBSu03}. Finally, in Section~\ref{sec:smooth} we study the smooth limit of infinitesimal rings and obtain harmonic maps to the sphere and hyperbolic plane and the elliptic sinh-Gordon equations $\Delta u\pm \sinh u=0$ in the limit.   
 Note that finding an integrable discretization for these harmonic maps, as well as a geometric interpretation of the Q4-equation, have been long-standing problems in the theory of integrable systems.
 
Orthogonal ring patterns in the sphere and in the hyperbolic plane can be treated as the Gauss map of discrete cmc surfaces in $\R^3$ and $\R^{2,1}$. The corresponding theory was developed in a recent joint work with Tim Hoffmann and Nina Smeenk \cite{BHoSm24}. Discrete minimal surfaces of \cite{BHoSp06} appear here in the limit of circle patterns, see also \cite{BNeTe19}.

Learning from the history of the theory of circle patterns and discrete conformality, the following developments of the theory presented here seem to be quite promising. The first one is a generalization to ring patterns with arbitrary intersection angles. As a guiding principle here we are going to use the analytic description by the general Q4-equation. The second one is a generalization to ring patterns with arbitrary combinatorics, in particular, to triangulations. In both cases the goal is a theory that satisfies all the requirements of discrete differential geometry, with perfectly matching geometric and analytic descriptions. 

\subsection*{Acknowledgements} 
I am very grateful to Tim Hoffmann. This paper grew out of research on discrete surfaces with constant mean curvature, a project we have been working on together for years. The corresponding results are published in the recent companion paper \cite{BHoSm24}.  
I would like to thank Nina Smeenk for creating all the images used in this article and the accompanying software. Special thanks go to Yuri Suris for helpful discussions on the $Q4$ equation, to Thilo R\"orig for helpful discussions on ring pattern parametrizations and to Boris Springborn for helpful comments.

This research was supported by the DFG Collaborative Research Center TRR 109 ``Discretization in Geometry and Dynamics''.

\section{Orthogonal ring patterns in the sphere}
\label{sec:ring_patterns}

We consider patterns of rings on the sphere such that
neighboring rings intersect orthogonally, i.e., the larger circle of one ring
intersects the smaller ring of the other and vice versa.

Moreover these orthogonal ring patterns have the combinatorics of the square grid. Let $\Pi\subset \Z^2$ be a cell complex defined by a subset of elementary squares of the $\Z^2$ lattice in $\R^2$. We assume that $\Pi$ is simply connected, and identify it with the set of its vertices indexed by $(m,n)\in \Z^2$. The main example is a combinatorial rectangle
$$
\Pi=\{ (m,n)\in \Z^2 | 1\le m\le  M, 1\le n \le N \}. 
$$
We distinguish even and odd sublattices $\Pi=G \cup G^*$, denoting the corresponding subsets by
$$
G=\{ (m,n)\in \Pi | m+n=0 \ ({\rm mod} \ 2) \}, G^*=\{ (m,n)\in \Pi | m+n=1 \ ({\rm mod} \ 2) \}.
$$
They are dual to each other. $G$ will be combinatorially associated to the rings, and $G^*$ to their touching (intersection) points.

A \emph{spherical ring}  is a pair of two concentric circles on~$S^2$ that form a
ring (annulus). We identify the vertices of $G$ with the centers $k\in S^2$ of the rings and denote the
inner circle and its radius by small letters~$c$ and $r$, and the outer circle
and its radius by capital letters $C$ and $R$. Here the spherical radii $r$ and $R$ are not allowed to be larger then $\pi/2$. 
We assign an orientation to the
ring by allowing $r$ to be negative: positive radius corresponds to
counter-clockwise and negative radius to clockwise orientation.  The outer
radius will always be positive. Subscripts are used to associate circles and radii to vertices
of the complex, e.g., $c_{m,n}$ is the inner circle of the ring with the center $k_{m,n}$ associated with the
vertex~$v_{m,n}\in G$.

\begin{definition}[Orthogonal spherical ring patterns]
  \label{def:ring_pattern}
  Let $\Pi=G \cup G^*$ be a simply connected subcomplex of the $\Z^2$ lattice defined by its squares. 
  An \emph{orthogonal ring pattern} with the combinatorics of the square grid consists of rings $(C,c)$, 
  associated to even vertices $G$, and points $\ell\in S^2$,
  associated to odd vertices $G^*$ satisfying the following properties:
  \begin{enumerate}
    \item \label{def:rp_intersection}
      The rings associated to neighboring vertices $v_i$ and $v_j$
      \emph{intersect orthogonally}, i.e., the outer circle $C_i$ of the one
      vertex intersects the inner circle $c_j$ of the other vertex orthogonally
      and vice versa (see Fig.~\ref{fig:circle_ring_pattern}, left).
    \item \label{def:rp_touch}
      For any four consequently (neighboring) orthogonal rings associated to 
      $v_{m,n}, v_{m+1,n-1}, v_{m+2,n}, v_{m+1,n+1}$, (note $m+n=0 \ ({\rm mod} \ 2) $), 
      the inner circles $c_{m,n}$ and $c_{m+2,n}$ and the outer circles 
      $C_{m+1, n-1}$ and $C_{m+1,n+1}$ pass through one point $\ell_{m+1,n}$. 
      (Then orthogonality implies that the two inner and the two outer circles
      touch in this point. see Fig.~\ref{fig:circle_ring_pattern}, center).
    \item \label{def:rp_orientation}
    For any ring $(C_{m,n}, c_{m,n})$ the four touching points 
    $\ell_{m+1,n}=c_{m,n}\cap c_{m+2,n}$, $\ell_{m,n+1}=C_{m,n}\cap C_{m,n+2}$,  $\ell_{m-1,n}=c_{m,n}\cap c_{m-2,n}$ 
    and $\ell_{m,n-1}=C_{m,n}\cap C_{m,n-2}$ have the same orientation as~$c_{m,n}$, i.e., are in
      counter-clockwise order if $r_{m,n}$ is positive and in clockwise order if $r_{m,n}$ is negative. 
  \end{enumerate}
\end{definition}

We will call the vertices $v_{m\pm 1, n\pm 1}$ \emph{neighboring} to the vertex $v_{m,n}$, and the vertices $v_{m,n\pm 2}, v_{m\pm 2, n}$ \emph{next-neighboring} to $v_{m,n}$. 
\begin{remark}
Let us note that in contrast to conformal character of circle patterns, orthogonal ring patterns are \emph{anisotropic}. The next-neighboring rings touch by inner circles in the direction of the (horizontal) $m$-axis, and by outer circles in the direction of the (vertical) $n$-axis, see Fig.~\ref{fig:circle_ring_pattern}, center. So the distances between the ring centers are smaller in the horizontal direction, than in the vertical direction. 
\end{remark}

\begin{figure}[t]
 \centering
  \includegraphics[width=.9\linewidth]{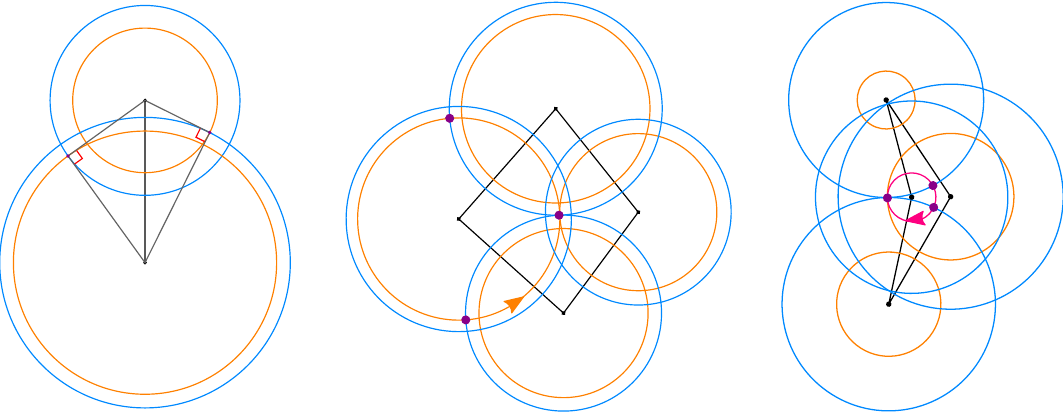}
  \caption{Left: Two orthogonally intersecting rings.
    Center: The inner circles touch along one diagonal of a quadrilateral and the
    outer circles along the other diagonal. The touching point coincides.
    Right: If the orientation (i.e., signed radii) of the inner circles differ,
    then the centers lie on the same side of the common tangent.
  }
  \label{fig:circle_ring_pattern}
\end{figure}

Applying the spherical Pythagoras' Theorem to two orthogonally intersecting rings we obtain: 
\begin{align} 
  \label{eq:q}
  \cos R \cos r_k = \cos r \cos R_k,   
\end{align}
where $r$ and $R$ are spherical radii of  the smaller and larger circles respectively. As an implication we obtain 

\begin{lemma}
There exists a global constant $q\le 1$ such that the radii of inner and outer circles of all rings of an orthogonal ring pattern are related by 
\begin{equation}
\label{eq:q-spherical}
q  \cos r  =  \cos R.
\end{equation}  
\end{lemma}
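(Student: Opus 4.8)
The plan is to read the pointwise identity \eqref{eq:q} as saying that the ratio $\cos R/\cos r$ is preserved when one moves from a ring to any of its neighbours, and then to use the connectedness of the index set $G$ to upgrade this to: the ratio is one and the same global constant $q$ for the whole pattern.

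First, fix two rings associated with neighbouring vertices, with circles $(C,c)$, $(C_k,c_k)$ and radii $(R,r)$, $(R_k,r_k)$. For a genuine (non-degenerate) ring the inner radius satisfies $|r|<R\le\pi/2$, so $\cos r>0$, and likewise $\cos r_k>0$; dividing \eqref{eq:q} by $\cos r\,\cos r_k$ gives
\begin{equation*}
  \frac{\cos R}{\cos r}=\frac{\cos R_k}{\cos r_k}.
\end{equation*}
Thus the function $v\mapsto \cos R_v/\cos r_v$ defined on the even sublattice $G$ takes the same value on any two neighbouring vertices. Two vertices of $G$ are neighbouring iff they differ by one of $(\pm1,\pm1)$; since $\Pi$ is a simply connected subcomplex of $\Z^2$ assembled from elementary squares, and every such square has exactly two vertices of $G$ sitting on one of its diagonals, walking from square to square shows that $G$ is connected with respect to this adjacency. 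Hence $v\mapsto \cos R_v/\cos r_v$ is constant on $G$; denote its value by $q$. By construction $q\cos r=\cos R$ for every ring, which is \eqref{eq:q-spherical}.

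Finally, to see $q\le 1$, use once more that $0\le |r|<R\le\pi/2$ for every ring: since the cosine is even and strictly decreasing on $[0,\pi/2]$, this yields $0\le\cos R<\cos|r|=\cos r$, and dividing by $\cos r>0$ gives $q<1$. (The borderline value $q=1$ corresponds to the degenerate case $R=|r|$ of all annuli collapsing to circles, i.e.\ the circle-pattern limit, and is included in the statement for convenience.) The argument is short; the only steps that call for any care are the combinatorial propagation — verifying that the neighbouring-adjacency really does make $G$ connected once $\Pi$ is simply connected — and the preliminary remark that all cosines appearing in the denominators are strictly positive for honest rings, so that passing to the invariant ratio is legitimate. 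I expect this to be the main, if mild, obstacle.
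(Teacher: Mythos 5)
Your argument is correct and is exactly the one the paper intends (the paper leaves it implicit, stating the lemma as an immediate implication of the Pythagoras identity \eqref{eq:q}): rewrite \eqref{eq:q} as the invariance of the ratio $\cos R/\cos r$ across neighbouring rings and propagate by connectedness of $G$. Your additional care about the positivity of $\cos r$ and the bound $q\le 1$ is consistent with the paper's standing assumption $|r|<R\le\pi/2$, so nothing further is needed.
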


\begin{remark}
In the limit of small circles
$$
R=\epsilon \tilde{R}, r=\epsilon\tilde{r}, q=(1-\epsilon^2\delta+o(\epsilon^2))
$$
we obtain orthogonal ring patterns in the euclidean plane introduced in \cite{BHoRo24}. Identity (\ref{eq:q-spherical}) implies that they have constant area $\pi(\tilde{R}^2-\tilde{r}^2)=2\pi\delta$.
\end{remark}

We parametrize spherical rings satisfying (\ref{eq:q-spherical}) using Jacobi elliptic functions of modulus $q$. These are doubly periodic functions on the rectangular torus with the lattice given by the real  $4K$ and imaginary $4iK'$ periods. 

\begin{proposition}
\label{prop:sphere_ring_uniformization}
The formulas 
\begin{equation}
\label{eq:sphere_ring_uniformization}
\cos r=\sn (\beta,q) ,\ \sin r=\cn (\beta,q), \ \sin R=\dn (\beta, q)
\end{equation}
describe a one to one correspondence between the radii $(r,R)\in [-\pi, \pi] \times [R_0,\pi-R_0]$ of a spherical ring with the parameter $q\le 1$ given by (\ref{eq:q-spherical}) and the variable $\beta\in [0, 4K] $ of Jacobi functions of modulus $q$. Here $R_0=\arccos q$.
\end{proposition}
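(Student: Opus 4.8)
The plan is to deduce the proposition from the two ``Pythagorean'' identities satisfied by the Jacobi functions together with relation~(\ref{eq:q-spherical}), reducing the bijectivity to the elementary fact that the amplitude function winds once around the unit circle over a real period. First I would record the standard facts about the Jacobi elliptic functions of modulus $q\in[0,1)$ that will be used: the identities $\sn^2\beta+\cn^2\beta=1$ and $\dn^2\beta+q^2\sn^2\beta=1$; the positivity $\dn\beta>0$ for all real $\beta$; and the fact that the amplitude $\varphi=\operatorname{am}(\beta,q)$, fixed by $d\varphi/d\beta=\dn(\beta,q)$ and $\operatorname{am}(0,q)=0$, is a strictly increasing continuous bijection of $[0,4K]$ onto $[0,2\pi]$ (with half-period shift $\operatorname{am}(\beta+2K)=\operatorname{am}(\beta)+\pi$), satisfying $\sn\beta=\sin\varphi$, $\cn\beta=\cos\varphi$, $\dn\beta=\sqrt{1-q^2\sin^2\varphi}$. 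In particular $\beta\mapsto(\sn\beta,\cn\beta)$ runs once, monotonically, around the unit circle as $\beta$ increases from $0$ to $4K$, hence restricts to a bijection of $[0,4K)$ onto the unit circle, while $\dn\beta$ ranges over $[\sqrt{1-q^2},1]$.

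Next I would verify that the formulas~(\ref{eq:sphere_ring_uniformization}) are compatible with the constraints on a spherical ring. Since $(\cos r,\sin r)$ lies on the unit circle, the first two equations determine $r$ uniquely as an angle modulo $2\pi$. The second Jacobi identity gives $\sin^2 R=\dn^2\beta=1-q^2\sn^2\beta=1-q^2\cos^2 r$, i.e.\ $\cos^2 R=q^2\cos^2 r$; combined with $\dn\beta>0$ this yields $\sin R\in[\sqrt{1-q^2},1]$, so $R\in[R_0,\pi-R_0]$ with $R_0=\arccos q=\arcsin\sqrt{1-q^2}$, and choosing the sign of $\cos R$ to agree with that of $q\cos r$ recovers exactly~(\ref{eq:q-spherical}). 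Conversely, for $(r,R)$ with $R\in[R_0,\pi-R_0]$ satisfying~(\ref{eq:q-spherical}) one has $\sin R>0$ and $R=\arccos(q\cos r)$, so $\sin R=\sqrt{1-q^2\cos^2 r}$.

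The bijectivity is then short. Let $\Phi$ send $\beta$ to $(r,R)$ via~(\ref{eq:sphere_ring_uniformization}), with $r$ taken in a fixed fundamental interval of length $2\pi$ and $R=\arccos(q\sn\beta)\in[R_0,\pi-R_0]$. If $\Phi(\beta_1)=\Phi(\beta_2)$ for $\beta_1,\beta_2\in[0,4K)$, then $(\sn\beta_1,\cn\beta_1)=(\cos r,\sin r)=(\sn\beta_2,\cn\beta_2)$, and injectivity of $\beta\mapsto(\sn\beta,\cn\beta)$ on $[0,4K)$ forces $\beta_1=\beta_2$. Conversely, given an admissible pair $(r,R)$, choose the unique $\beta\in[0,4K)$ with $(\sn\beta,\cn\beta)=(\cos r,\sin r)$; then $\dn^2\beta=1-q^2\sn^2\beta=1-q^2\cos^2 r=1-\cos^2 R=\sin^2 R$, and positivity of both $\dn\beta$ and $\sin R$ forces $\dn\beta=\sin R$, so $\Phi(\beta)=(r,R)$. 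Hence $\Phi$ is a bijection of $[0,4K)$ onto the set of admissible radii; identifying $\beta=0$ with $\beta=4K$, which give the same ring, yields the stated one-to-one correspondence over $\beta\in[0,4K]$.

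The step I expect to be the main obstacle --- indeed the only one requiring care --- is that $\sin R=\dn\beta$ does not by itself determine $R\in(0,\pi)$: the sign of $\cos R$ is ambiguous and is recovered only from~(\ref{eq:q-spherical}), using $q\ge0$ to conclude $\operatorname{sign}(\cos R)=\operatorname{sign}(\cos r)=\operatorname{sign}(\sn\beta)$. The remaining points are bookkeeping: the endpoint identification $\beta=0\sim\beta=4K$ (and, correspondingly, the two representatives $r=\pm\pi$ of one angle), which is what ``one-to-one'' means here, and keeping track of the half-period shift of $\operatorname{am}$ to be certain it is the claimed monotone bijection onto $[0,2\pi]$. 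Beyond these, everything reduces to elementary manipulations of the identities for $\sn,\cn,\dn$ recalled above.
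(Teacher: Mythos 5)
Your proof is correct and follows essentially the same route as the paper's: use the first two formulas to pin down $r$ (via the unit circle / amplitude function), use the identity $\dn^2\beta+q^2\sn^2\beta=1$ to get $\cos^2R=q^2\cos^2r$, and resolve the resulting ambiguity $R\leftrightarrow\pi-R$ by the sign condition in~(\ref{eq:q-spherical}). You simply spell out the bijectivity of $\beta\mapsto(\sn\beta,\cn\beta)$ on $[0,4K)$ and the endpoint identifications, which the paper leaves implicit.
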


\begin{proof}
The first two equations relate $r$ and $\beta$, we note that $r$ may be negative. 
Then the last formula in (\ref{eq:sphere_ring_uniformization}) determines $R$ up to the ambiguity $R\leftrightarrow \pi -R$.
The identity $\dn^2\beta+q^2\sn^2\beta=1$ is equivalent to $q^2\cos^2 r=\cos^2 R$. Finally the identity (\ref{eq:q-spherical}) fixes the sign of $\cos R$ and thus the choice of $R$. 
\end{proof}

The correspondence of the intervals of this parametrization is shown in the Table.
\begin{center}
\begin{tabular}{l c r}
  \label{eq:spherical_angles}
$\beta$  & $ r $ & $R$ \\ 
 $ [0,K] $ &$ [0,\frac{\pi}{2}]$ & $[R_0,\frac{\pi}{2}]$\\
  $ [K,2K] $ &$ [-\frac{\pi}{2},0]$ & $[R_0,\frac{\pi}{2}]$\\ 
  $ [2K,3K] $ &$ [-\pi,-\frac{\pi}{2}]$ & $[\frac{\pi}{2}, \pi -R_0]$\\ 
  $ [3K,4K] $ &$ [\frac{\pi}{2},\pi]$ & $[\frac{\pi}{2}, \pi -R_0]$
\end{tabular}
\end{center}

\begin{figure}[t]
	\centering
	\begin{overpic}[scale=.2]
		{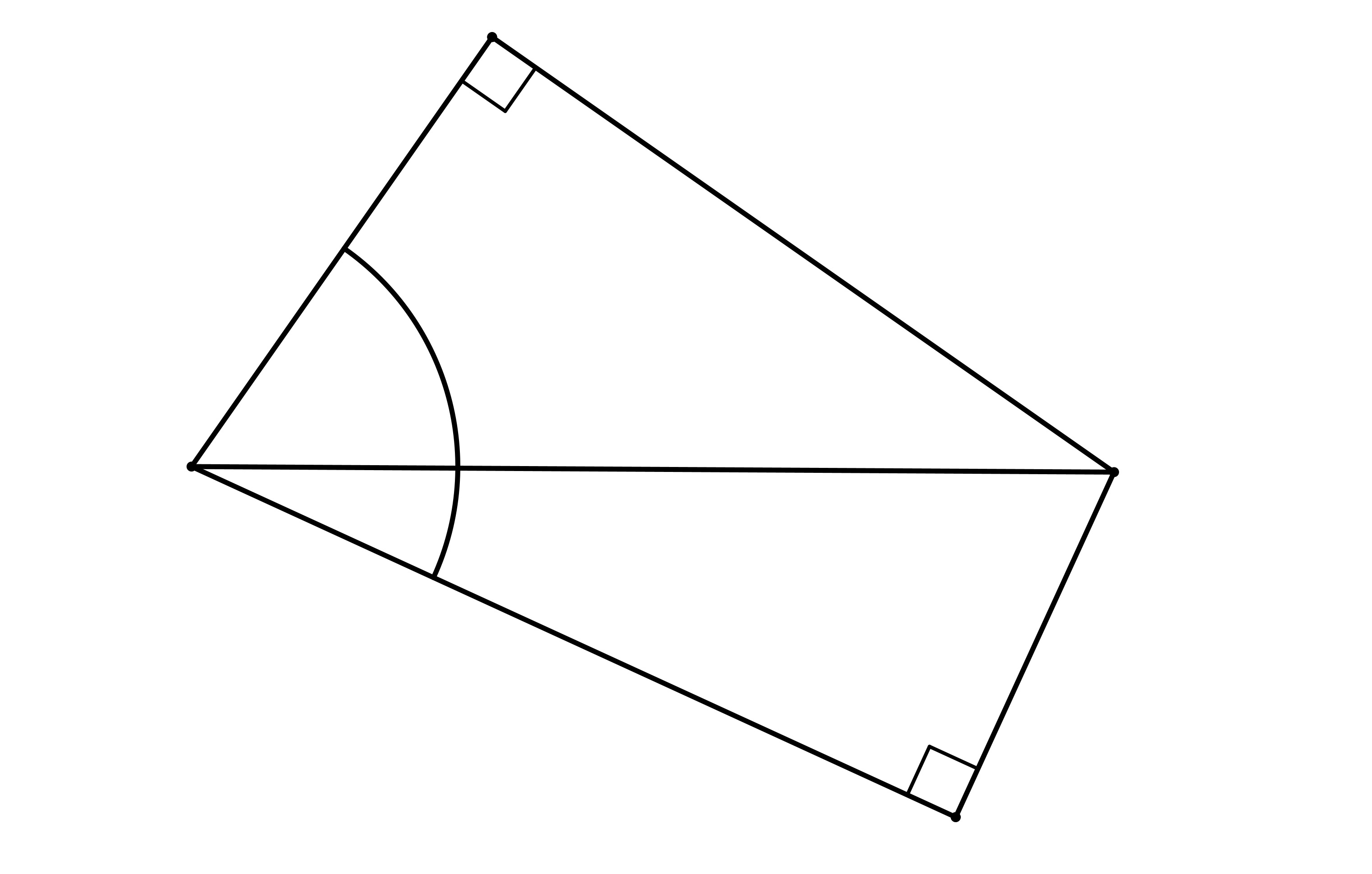}
		\put(22,34){$\varphi_k$}
		\put(24,25){$\psi_k$}
		
		\put(8,28){$v$}
		\put(83,28){$v_k$}
		
		\put(23,48){$r$}
		\put(35,10){$R$}
		
		\put(57,48){$R_k$}
		\put(77,14){$r_k$}
	\end{overpic}
	\caption{An orthogonal quadrilateral spanned by two rings.}
	\label{fig:kite}
\end{figure}
Consider an edge $(v, v_k)$ in the pattern. 
The angles in an orthogonal quadrilateral spanned by two rings with radii $(r,R)$ and $(r_k, R_k)$, see Fig.~\ref{fig:kite}, can be computed using Napier's rule:
\begin{align*}
 \varphi_k = \arctan \frac{\tan R_k}{\sin r}
 \quad \text{and} \quad   
\psi_k  = \arctan \frac{\tan r_k}{\sin R}.
\end{align*}

In terms of the elliptic functions parametrization the angles are given by
\begin{align}
  \label{eq:spherical_intervals}
  \varphi_k &= 
  \arctan\left( 
    \frac{\dn \beta_k}{q \sn \beta_k \cn \beta}
  \right)
	    &\text{and}&&
  \psi_k &=
  \arctan\left( 
  \frac{\cn \beta_k}{\sn \beta_k \dn \beta}
  \right).
\end{align}

They can be transformed to the following form
\begin{eqnarray*}
  \varphi_k &=& \arg \left(1+i \frac{\dn \beta_k}{q \sn \beta_k \cn \beta}\right) = \arg \left(1- \frac{\cn (\beta_k+iK')}{\cn \beta}\right),\\
  \psi_k &= &\arg \left(1+i\frac{\cn \beta_k}{\sn \beta_k \dn \beta}\right)=\arg \left(1-\frac{\dn (\beta_k+iK')}{\dn \beta}\right). 
\end{eqnarray*}

Note that $ \cn (\beta_k+iK')$ and $\dn (\beta_k+iK')$ are pure imaginary. For the sum this implies
$$
\theta_k:=\varphi_k+\psi_k= \arg \left(  \frac{1}{\cn\beta \dn\beta} \frac{\cn\beta -\cn (\beta_k+iK') }{\dn\beta +\dn (\beta_k+iK')}\right).
$$

We compute a product representation using the following addition formulas for
Jacobi elliptic functions
\begin{eqnarray}
\label{eq:addition_formulas}
  \cn(x+y) - \cn(x-y) &= -
  \frac
  {2 \sn (x) \sn (y) \dn (x) \dn (y)} 
  {1 - q^2 \sn^2 (x) \sn^2 (y)}
  \,, \\
  \dn(x+y) + \dn(x-y) &= 
  \frac
  {2 \dn (x) \dn (y)} 
  {1 - q^2 \sn^2 (x) \sn^2 (y)}. 
  \nonumber
\end{eqnarray}
So with $x = \frac{\beta + \beta_k + i K'}{2}$ and $y = \frac{\beta - \beta_k - i K'}{2}$ we obtain:
\begin{eqnarray*}
 \theta_k = &\arg\left(-{\rm sign}(r) \sn\frac{\beta+\beta_k+ iK'}{2}\sn\frac{\beta-\beta_k- iK'}{2}\right)= \\
 &\arg\left(-{\rm sign}(r) \frac{\sn\frac{\beta+\beta_k+ iK'}{2}}{\sn\frac{\beta-\beta_k+ iK'}{2}}\right).
\end {eqnarray*}
Here we used that $\dn\beta >0$ and ${\rm sign}(\cn\beta)={\rm sign}(r)$.
 
Finally the sum of the angles is given by
\begin{eqnarray}
  \theta_k &=&\pi + \arg\left(\frac{\sn\frac{\beta+\beta_k+ iK'}{2}}{\sn\frac{\beta-\beta_k+ iK'}{2}}\right), 
  \quad \text{if}\  r>0  \Leftrightarrow  \beta\in [-K,K], 
  \label{eq:angle_positive_r}\\
  \theta_k &=&\arg\left(\frac{\sn\frac{\beta+\beta_k+ iK'}{2}}{\sn\frac{\beta-\beta_k+ iK'}{2}}\right), 
  \quad \text{if}\ r<0  \Leftrightarrow  \beta\in [K,3K].
   \label{eq:angle_negative_r}
  \end{eqnarray}
 Note that 
$$
\sn(\beta-\frac{iK'}{2})=\frac{1}{q\sn(\beta+\frac{iK'}{2})},\quad  |\sn(\beta+\frac{iK'}{2})|=\frac{1}{\sqrt{q}}.
$$

An internal ring of an orthogonal ring pattern has four neighboring rings, which consequently touch. We call this configuration of five rings {\em a ring flower}. 
The existence of a ring flower for any internal ring is equivalent to conditions (1) and (2) of Definition ~\ref{def:ring_pattern}. We define {\em generalized orthogonal ring patters} as sets of rings with the combinatorics of the square grid, such that every internal ring possesses a ring flower. Generalized ring patterns do not necessarily satisfy condition (3) of Definition ~\ref{def:ring_pattern}.

We arrive at the following conclusion.
\begin{theorem} 
\label{thm:spherical}
Rings build a generalized orthogonal ring pattern in a sphere if and only if they are given by the variables $\beta$ satisfying
  \begin{equation}  
  \label{eq:spherical}
    \prod_{v_k \sim v} 
    \dfrac
    { \sn (\dfrac{\beta - \beta_k + i K'}{2})}
    { \sn (\dfrac{\beta + \beta_k + i K'}{2})}=1
  \end{equation}
  for every internal vertex.
\end{theorem}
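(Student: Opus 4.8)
The plan is to localize the statement: I claim that a ring flower exists around an internal vertex $v$ if and only if \eqref{eq:spherical} holds at $v$. Granting this local equivalence, the theorem is immediate — for the ``only if'' direction every internal ring of a generalized pattern carries a ring flower, while for the ``if'' direction one develops the pattern inductively over the simply connected complex $\Pi$, at each step using the local equivalence to close up the ring flower currently being completed, simple connectedness ruling out monodromy.

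For the local equivalence, fix the central ring with parameter $\beta$ and its four neighbours with parameters $\beta_1,\dots,\beta_4$; their radii, and by \eqref{eq:q} their geodesic distances from the centre $v$, are thereby determined, while the angular position of each neighbour around $v$ is still free. The essential point is that the orthogonal quadrilateral (``kite'') spanned by the central ring and the $k$-th neighbour — with consecutive sides $r$, $R_k$, $r_k$, $R$ and right angles at the two vertices where the circles meet — is \emph{rigid}: it is determined up to congruence by $\beta$ and $\beta_k$, and in particular its angle at $v$ equals the quantity $\theta_k=\varphi_k+\psi_k$ already computed above. A ring flower is the cyclic union of these four kites around $v$, glued in the combinatorial order prescribed by Definition~\ref{def:ring_pattern}, consecutive kites sharing an edge joining $v$ to a common touching point. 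This gluing is essentially unconstrained: the shared edges automatically have equal length, because the two $v$-edges of every kite have lengths $r$ and $R$ (the radii of the central ring) and these alternate correctly around the flower; the angle at each touching point, being an intersection angle of two circles orthogonal to a third, is automatically $\pi/2$ on each side, so the touching points sit correctly on the inner circle $c$ and on the outer circle $C$; and the next-neighbouring rings demanded by condition~(2) can always be adjoined afterwards, since each of them only has to pass through a single, already determined, touching point. Hence the one and only closure condition is that the four angles at $v$ sum to a full turn,
\begin{equation*}
\theta_1+\theta_2+\theta_3+\theta_4 = 2\pi .
\end{equation*}

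It remains to rewrite this as \eqref{eq:spherical}. By \eqref{eq:angle_positive_r}--\eqref{eq:angle_negative_r}, $\theta_k=\varepsilon\pi+\arg S_k$ with $S_k=\sn\frac{\beta+\beta_k+iK'}{2}\big/\sn\frac{\beta-\beta_k+iK'}{2}$ and $\varepsilon\in\{0,1\}$ the same for all $k$ (it records only the sign of $r$). Each $S_k$ has modulus one: for real $\beta,\beta_k$, $\overline{\sn(\tfrac{\beta\pm\beta_k}{2}+\tfrac{iK'}{2})}=\sn(\tfrac{\beta\pm\beta_k}{2}-\tfrac{iK'}{2})=\big(q\,\sn(\tfrac{\beta\pm\beta_k}{2}+\tfrac{iK'}{2})\big)^{-1}$, so $\big|\sn(\tfrac{\beta\pm\beta_k}{2}+\tfrac{iK'}{2})\big|^2=1/q$ and the numerator and denominator of $S_k$ have the same modulus. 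Therefore $\prod_k S_k$ has modulus one, and the angle sum being $2\pi$ forces $\prod_k S_k=1$; passing to reciprocals yields $\prod_{v_k\sim v}\sn\frac{\beta-\beta_k+iK'}{2}\big/\sn\frac{\beta+\beta_k+iK'}{2}=1$, which is \eqref{eq:spherical}. Conversely \eqref{eq:spherical} forces the angle sum into $2\pi\mathbb{Z}$, and the geometric ranges of $\varphi_k,\psi_k$ select the value $2\pi$.

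The main obstacle is precisely the reduction of ``a ring flower exists'' to the single scalar equation $\sum_k\theta_k=2\pi$: one has to verify that assembling the four rigid kites imposes no condition beyond the angle balance at $v$ — in particular that the touching points produced by neighbouring kites actually coincide and land on the correct circle of the central ring — and one must keep track of branches and signs once condition~(3) of Definition~\ref{def:ring_pattern} is dropped, so that the bookkeeping of the $\varepsilon\pi$ terms and of the arctangent ranges really pins the angle sum to $2\pi$ in the generalized setting. Once this geometric reduction is secured, the elliptic-function part (the addition formulas already recorded and the modulus computation above) is routine.
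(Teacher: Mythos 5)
Your overall strategy is the same as the paper's: reduce the existence of a ring flower at an internal vertex to a condition on the angle sum $\sum_k\theta_k$, then translate that into the product formula using the fact that each factor has modulus one (the paper's proof is a two-line version of exactly this, and your careful discussion of why gluing the four rigid kites imposes no condition beyond the angle balance at $v$ is a welcome expansion of what the paper leaves implicit). However, there is a genuine error in the final step. You assert that the closure condition is $\sum_k\theta_k=2\pi$ and that ``the geometric ranges of $\varphi_k,\psi_k$ select the value $2\pi$'' from the congruence $\sum_k\theta_k\in 2\pi\mathbb{Z}$ implied by \eqref{eq:spherical}. Both halves of this fail in the generalized setting the theorem addresses. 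A flower around a negatively oriented central ring ($r<0$) closes up with $\sum_k\theta_k=-2\pi$ --- the paper states explicitly after \eqref{eq:Theta} that $\Theta=-2\pi$ for negative orientation --- and since generalized patterns drop condition (3) of Definition~\ref{def:ring_pattern}, branched flowers with angle sum $2N\pi$ for other integers $N$ must also be admitted; this is why the paper's proof says the angles ``sum up to $2N\pi$'', not $2\pi$. Correspondingly, \eqref{eq:spherical} only determines the angle sum modulo $2\pi$, and that is precisely all it needs to determine here: the equivalence is ``flower exists $\Leftrightarrow\ \sum_k\theta_k\in 2\pi\mathbb{Z}\ \Leftrightarrow$ \eqref{eq:spherical}''. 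Pinning the sum to exactly $2\pi$ is the content of the later Theorem~\ref{thm:beta-equations}, which needs the real-valued function $g$ and the restriction $\beta\in[0,2K]$ to resolve the branch of the argument; it cannot be extracted from the ranges of the arctangents alone. If you replace ``the sum equals $2\pi$'' by ``the sum lies in $2\pi\mathbb{Z}$'' throughout (checking that a flower closes for any such multiple, with the appropriate orientation or branching), your argument becomes correct and coincides with the paper's.
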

\begin{proof}
Condition (\ref{eq:spherical}) is equivalent to the fact that the angles $\theta_k$ around any internal vertex $v$ on a sphere sum up to~$2 N \pi$. This guarantees that the corresponding rings build a flower.
\end{proof}

Fort a more detailed analysis of the angles around vertexes let us introduce a smooth function of $x\in {\mathbb R}$
\begin{equation}
\label{eq:g(x)}
g(x):=\frac{\pi}{2}-\arg \sn(\frac{x+iK'}{2}), 
\end{equation}
normalized by $g(0)=0$. The function $g(x)$ possesses the following symmetries:
\begin{eqnarray}
g(x+4K)=g(x)+\pi,
\label{eq:g(x)_periodicity}
\\
g(-x)=-g(x).
\label{eq:g(x)_odd}
\end{eqnarray}
An example of the graph of this function is presented in Fig.~\ref{fig:g(x)}.
\begin{figure}[t]
  \centering
  \includegraphics[width=.6\linewidth]{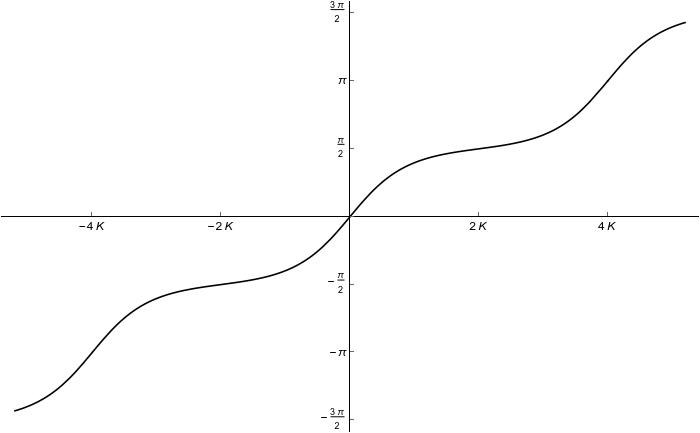}
  \caption{Graph of the function $g(x)=\frac{\pi}{2}-\arg \sn(\frac{x+iK'}{2})$ for $q=0.8$.
  }
  \label{fig:g(x)}
\end{figure}

Applying the addition formula 
$$
\sn(a+b)=\frac{\sn a\cn b\dn b+ \sn b\cn a \dn a}{1-q^2\sn^2 a\sn^2 b}
$$
and
\begin{equation}
\label{eq:iK'/2}
\sn\frac{iK'}{2}=\frac{i}{\sqrt{q}},\quad \cn\frac{iK'}{2}=\frac{\sqrt{1+q}}{\sqrt{q}},\quad \dn \frac{iK'}{2}=\sqrt{1+q},
\end{equation} 
we obtain another formula for $g(x)$:
\begin{equation}
\label{eq:g(x)_2}
g(x)=\arctan \frac{(1+q)\sn\frac{x}{2}}{\cn\frac{x}{2}\dn\frac{x}{2}}.
\end{equation}

The function $g(x)$ is monotonically increasing. Indeed, differentiating
$$
\sn\frac{x+iK'}{2}=\frac{i}{\sqrt{q}}e^{-ig(x)}
$$
implies
$$
g'(x)=-\dfrac{\cn\frac{x+iK'}{2}\dn\frac{x+iK'}{2}}{2i\sn\frac{x+iK'}{2}}=\frac{1}{2}\dn\frac{x+iK'}{2}\dn\frac{x-iK'}{2}.
$$
Applying the addition formula
$$
\dn\frac{x+iK'}{2}\dn\frac{x-iK'}{2}=\dfrac{\dn^2\frac{iK'}{2}-q^2\cn^2\frac{iK'}{2}\sn^2\frac{x}{2}}{1-q^2\sn^2\frac{iK'}{2}\sn^2\frac{x}{2}}
$$
and substituting (\ref{eq:iK'/2})
we obtain
\begin{equation}
\label{eq:g'_1}
g'(x)=\frac{1+q}{2}\left(\frac{1-q\sn^2\frac{x}{2}}{1+q\sn^2\frac{x}{2}}\right).
\end{equation}
This function is obviously positive valued for any $x$.

Another formula for it, which can be obtained by applying addition formulas, is
\begin{equation}
\label{eq:g'_2}
g'(x)=\frac{1}{2}(\dn x +q \cn x).
\end{equation}

For the second derivative we have
\begin{equation}
\label{eq:g''}
g''(x)=-\frac{\sn x}{2}(\dn x +q \cn x),
\end{equation}
which implies that $g(x)$ is concave on $x\in [0,2K]$ and convex on $x\in [2K,4K]$.


In the rest of this section we consider the most interesting case when every ring lies in a half-sphere, i.e.
$$
R<\frac{\pi}{2},\quad \text{for all rings},
$$
which corresponds to $\beta\in [0,2K]$. Four possible combinations of the orientations of two neighboring rings are presented in Fig.~\ref{fig:four_cases_orientatioin}. For $r>0$ the angle is positive $\theta_k>0$. For $r<0$ the angle is negative $\theta_k<0$. In both cases 
\begin{equation}
\label{eq:angle_tmp}
-\pi<\theta_k<\pi.
\end{equation}

\begin{figure}[t]
  \centering
  \includegraphics[width=\linewidth]{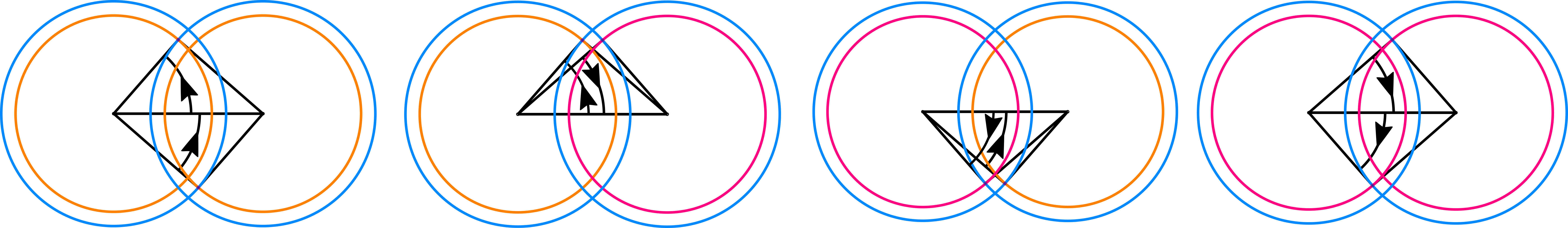}
  \caption{
    Cyclic quadrilaterals defined by two orthogonally intersecting circle
    rings depending on the signs of the radii: \\
    (Left): $r>0, r_k > 0$, embedded quadrilateral, $\theta_k>0$,\\
    (Center-Left): $r>0, r_k <0$, non-embedded quadrilateral, $\theta_k>0$,\\
    (Center-Right): $r<0, r_k >0$, non-embedded quadrilateral, $\theta_k<0$,
    (Right): $r<0, r_k <0$, embedded quadrilateral, $\theta_k<0$.
  }
  \label{fig:four_cases_orientatioin}
\end{figure}

\begin{lemma}
\label{lem:angle_spherical}
For two orthogonally intersecting rings on a sphere with external radii $R,R_k<\frac{\pi}{2}$ and for any orientation of their inner circles  $c$ and $c_k$ the opening angle $ \theta_k$ is given by
\begin{eqnarray}
\theta_k &=& \pi +g(\beta-\beta_k) - g(\beta+\beta_k), \quad \text{if}\  r>0,
\label{eq:angle_positive_r_exact}\\
\theta_k &=& g(\beta-\beta_k) - g(\beta+\beta_k), \quad \text{if}\  r<0.
\label{eq:angle_negative_r_exact}
\end{eqnarray}
\end{lemma}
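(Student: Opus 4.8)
The plan is to take the formulas \eqref{eq:angle_positive_r}--\eqref{eq:angle_negative_r}, which were derived for arbitrary orientations and which, under the hypothesis $R,R_k<\tfrac{\pi}{2}$ (equivalently $\beta,\beta_k\in[0,2K]$), hold together with the sharp bound \eqref{eq:angle_tmp}, rewrite the argument of the $\sn$-quotient in terms of the function $g$ of \eqref{eq:g(x)}, and then fix the $2\pi$-ambiguity of $\arg$ using the monotonicity of $g$. Combining \eqref{eq:angle_positive_r}--\eqref{eq:angle_negative_r} with \eqref{eq:angle_tmp}, $\theta_k$ is the unique number in $(-\pi,\pi)$ with
$$
\theta_k\ \equiv\ \delta\pi+\arg\Bigl(\frac{\sn\frac{\beta+\beta_k+iK'}{2}}{\sn\frac{\beta-\beta_k+iK'}{2}}\Bigr)\pmod{2\pi},\qquad \delta=\begin{cases}1,&r>0,\\ 0,&r<0.\end{cases}
$$
No separate treatment of the sign of $r_k$ is needed, since the derivation of \eqref{eq:angle_positive_r}--\eqref{eq:angle_negative_r} used only $\dn\beta>0$ and ${\rm sign}\,\cn\beta={\rm sign}\,r$, and is thus uniform over the four cases of Fig.~\ref{fig:four_cases_orientatioin}.

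The first step is to express $\arg\sn\tfrac{x+iK'}{2}$ through $g$. Because the zeros of $\sn$ lie at $2mK+2niK'$ and its poles at $2mK+(2n+1)iK'$ ($m,n\in\Z$), the value $\sn\tfrac{x+iK'}{2}$ is finite and nonzero for every real $x$, so $\arg\sn\tfrac{x+iK'}{2}$ is a well-defined smooth function of $x\in\R$ once a branch is chosen; the normalization $g(0)=0$ together with $\sn\tfrac{iK'}{2}=i/\sqrt q$ from \eqref{eq:iK'/2} shows that the branch in \eqref{eq:g(x)} is $\arg\sn\tfrac{x+iK'}{2}=\tfrac{\pi}{2}-g(x)$. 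Hence, modulo $2\pi$,
$$
\arg\Bigl(\frac{\sn\frac{\beta+\beta_k+iK'}{2}}{\sn\frac{\beta-\beta_k+iK'}{2}}\Bigr)\ \equiv\ \bigl(\tfrac{\pi}{2}-g(\beta+\beta_k)\bigr)-\bigl(\tfrac{\pi}{2}-g(\beta-\beta_k)\bigr)\ =\ g(\beta-\beta_k)-g(\beta+\beta_k),
$$
so $\theta_k\equiv\delta\pi+g(\beta-\beta_k)-g(\beta+\beta_k)\pmod{2\pi}$.

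The second step is to show that $\delta\pi+g(\beta-\beta_k)-g(\beta+\beta_k)$ already lies in $(-\pi,\pi)$, so that it must equal $\theta_k$. By \eqref{eq:g'_1} (or \eqref{eq:g'_2}) the function $g$ is strictly increasing, and \eqref{eq:g(x)_periodicity}--\eqref{eq:g(x)_odd} give $g(0)=0$, $g(2K)=\tfrac{\pi}{2}$ and $g(4K)=\pi$. Since $R_k<\tfrac{\pi}{2}$ excludes $\beta_k\in\{0,2K\}$, we have $0<2\beta_k<4K$, so from $\beta-\beta_k<\beta+\beta_k<(\beta-\beta_k)+4K$ and $g(x+4K)=g(x)+\pi$ we get $0<g(\beta+\beta_k)-g(\beta-\beta_k)<\pi$, i.e. $g(\beta-\beta_k)-g(\beta+\beta_k)\in(-\pi,0)$. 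Thus the expression lies in $(-\pi,0)$ when $r<0$ and in $(0,\pi)$ when $r>0$, in both cases inside $(-\pi,\pi)$, which yields \eqref{eq:angle_positive_r_exact}--\eqref{eq:angle_negative_r_exact}. I expect the only delicate point to be exactly this branch bookkeeping: making sure the extra $\pi$ picked up from $\arg(-{\rm sign}(r)\,\cdot\,)$ and the a priori uncontrolled branch of the quotient's argument combine to land in $(-\pi,\pi)$; the estimate $0<g(\beta+\beta_k)-g(\beta-\beta_k)<\pi$ is precisely what guarantees it, and all remaining manipulations are the addition-formula computations already done above.
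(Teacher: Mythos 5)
Your argument is correct and follows the same two-step route as the paper's own proof: first observe that \eqref{eq:angle_positive_r}--\eqref{eq:angle_negative_r} agree with \eqref{eq:angle_positive_r_exact}--\eqref{eq:angle_negative_r_exact} modulo $2\pi$ via the definition \eqref{eq:g(x)}, then use $\beta_k\in[0,2K]$ to place the right-hand sides inside the interval $(-\pi,\pi)$ forced by \eqref{eq:angle_tmp}. Your derivation of the bound $0<g(\beta+\beta_k)-g(\beta-\beta_k)<\pi$ from the monotonicity of $g$ and the symmetry $g(x+4K)=g(x)+\pi$ just makes explicit what the paper leaves implicit in its second sentence.
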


\begin{proof}
Formulas (\ref{eq:angle_positive_r},\ref{eq:angle_negative_r})  coincide with (\ref{eq:angle_positive_r_exact},\ref{eq:angle_negative_r_exact}) modulo $2\pi$. On the other hand, since $\beta_k\in [0,2K]$, expressions (\ref{eq:angle_positive_r_exact},\ref{eq:angle_negative_r_exact}) satisfy the geometric condition (\ref{eq:angle_tmp}).
\end{proof} 
The cone angle at a vertex is defined by
\begin{equation}
\label{eq:Theta}
\Theta:=\sum_{k}\theta_k,
\end{equation}
where the sum is taken over all neighboring rings.
For an internal vertex of an orthogonal ring pattern $\Theta=2\pi$ for positive $(r>0)$ and  $\Theta=-2\pi$ for negative $(r<0)$ orientations respectively.

\begin{theorem}
\label{thm:beta-equations}
Uniformizing variables $\beta$ determine an orthogonal ring pattern with $R<\frac{\pi}{2}$ if and only if they lie in the interval $\beta\in [0,2K]$  and for all internal vertices satisfy the condition
\begin{equation}
\label{eq:angle_internal_beta}
\sum_{k=1}^4 g(\beta+\beta_k)-g(\beta-\beta_k)=2\pi,
\end{equation}
where the sum is taken over all four neighboring rings.

For a boundary vertice $v$ the variable $\beta$ satisfies 
\begin{eqnarray}
\label{eq:angle_boundary_beta}
\pi V(v)+\sum g(\beta-\beta_k)-g(\beta+\beta_k)=\Theta (v), \quad \text{if}\  r>0, \\
\sum g(\beta-\beta_k)-g(\beta+\beta_k)=\Theta (v), \quad \text{if}\  r<0, \nonumber
\end{eqnarray}
where $\Theta (v)$ is the total nominal angle at the vertex, and $V(v)$ is the valence of $v$, i.e. the number of neighboring rings orthogonal to the ring centered at $v$. 
\end{theorem}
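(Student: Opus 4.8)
The plan is to reduce the statement to the angle formula of Lemma~\ref{lem:angle_spherical} together with the cone angle identity $\Theta=\pm2\pi$ at an interior vertex recorded just above the theorem, and to invoke Theorem~\ref{thm:spherical} for the converse.

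\emph{Necessity.} Assuming that the $\beta$'s come from an orthogonal ring pattern with $R<\pi/2$, I would first use Proposition~\ref{prop:sphere_ring_uniformization} and the table following it to note that $R<\pi/2$ at every vertex is equivalent to $\beta\in[0,2K]$. For an interior vertex $v$ (which has valence $4$) I would substitute the two cases of Lemma~\ref{lem:angle_spherical} into $\sum_{k=1}^4\theta_k=\Theta$, using $\Theta=2\pi$ when $r>0$ and $\Theta=-2\pi$ when $r<0$. In the case $r>0$ the four additive constants $\pi$ combine to $4\pi$ and one is left with $\sum_{k=1}^4\bigl(g(\beta+\beta_k)-g(\beta-\beta_k)\bigr)=2\pi$; in the case $r<0$ one obtains the same equation directly. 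Hence \eqref{eq:angle_internal_beta} holds, and notably it is the same equation for both orientations. For a boundary vertex $v$ of valence $V(v)$ the total angle occupied by the flower sectors is $\Theta(v)=\sum_k\theta_k$ by the very definition of the nominal angle, and substituting Lemma~\ref{lem:angle_spherical} termwise yields \eqref{eq:angle_boundary_beta}, the extra term $\pi V(v)$ being the sum of the $V(v)$ constants $\pi$ present in the $r>0$ case.

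\emph{Sufficiency.} Conversely, assume $\beta\in[0,2K]$ at every vertex and \eqref{eq:angle_internal_beta} at every interior vertex. Writing $g(x)=\frac{\pi}{2}-\arg\sn(\frac{x+iK'}{2})$, equation \eqref{eq:angle_internal_beta} is precisely the assertion that the argument of the product in \eqref{eq:spherical} is $\equiv0\pmod{2\pi}$; the modulus of that product is automatically $1$, since $|\sn(x+\tfrac{iK'}{2})|=1/\sqrt q$ for every real $x$ makes each of its factors unimodular. Therefore \eqref{eq:spherical} holds, and Theorem~\ref{thm:spherical} produces a generalized orthogonal ring pattern; in particular conditions (1)--(2) of Definition~\ref{def:ring_pattern} hold at every interior ring. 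To establish condition (3), I would observe that by Lemma~\ref{lem:angle_spherical} and \eqref{eq:angle_tmp} every flower sector at an interior vertex $v$ has opening angle $\theta_k\in(-\pi,\pi)$, all of the same sign as $r_v$ (the sign being determined by whether $\beta_v$ lies in $[0,K]$ or in $[K,2K]$), and that \eqref{eq:angle_internal_beta}, being an identity for the genuine continuous function $g$ rather than merely a congruence modulo $2\pi$, forces $\sum_{k=1}^4\theta_k=\pm2\pi$ with that very sign. Thus the four flower sectors sweep the circle around $v$ exactly once in the sense of the orientation of $c_v$, which is precisely the cyclic ordering condition (3) for the four next-neighbouring touching points $\ell_{m\pm1,n}$ on $c_v$ and $\ell_{m,n\pm1}$ on $C_v$. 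Finally $\beta\in[0,2K]$ translates back, via the same table, to $R\le\pi/2$, so the resulting pattern is of the required type.

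\emph{Main obstacle.} The only step that is neither a substitution nor a citation is the final part of the sufficiency argument: converting ``the flower sectors at $v$ sweep the full angle exactly once, with the correct sign'' into condition~(3), which is phrased through the cyclic order of the touching points on $c_v$ and $C_v$. I expect this to require matching the sector of angle $\theta_k=\varphi_k+\psi_k$ associated with the edge $(v,v_k)$ with the arc of $c_v$ (respectively $C_v$) cut out between two consecutive touching points, using the geometry of the orthogonal kite of Fig.~\ref{fig:kite}; once that sign bookkeeping is settled the rest is immediate.
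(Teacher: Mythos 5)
Your argument is correct and is essentially the proof the paper intends: the paper leaves Theorem~\ref{thm:beta-equations} without an explicit proof because it follows immediately by substituting the exact angle formulas of Lemma~\ref{lem:angle_spherical} into the cone-angle conditions $\Theta=\pm2\pi$ (interior) and $\Theta=\Theta(v)$ (boundary), exactly as you do. Your extra care in the sufficiency direction --- observing that the identity for the genuine function $g$, rather than a congruence mod $2\pi$, is what upgrades a generalized pattern (Theorem~\ref{thm:spherical}) to one satisfying the orientation condition (3) --- is a correct and welcome elaboration of a point the paper glosses over.
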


\section{Hyperbolic orthogonal ring patterns}
\label{sec:hyperbolic_ring_patterns}

From the hyperbolic Pythagoras' Theorem we have:
$$
\cosh R \cosh r_k=\cosh r \cosh R_k,
$$
and it follows, as in the spherical
case,  that we can introduce a constant $q \le 1$ such that:
\begin{equation}
\label{eq:q_hyperbolic}
  \cosh r = q \cosh R.
\end{equation}

As in the spherical case we introduce new variables and express the radii
in terms of Jacobi elliptic functions:
\begin{equation}
\label{eq:hyperbolic_ring_uniformization_beta}
\cosh R=\sn (\beta,q) ,\ \sinh R= i \cn (\beta,q), \ \sinh r=i \dn (\beta, q).
\end{equation}
Note that in contrast to the spherical case, the $\beta$-variables are now
complex:
$$
\beta = \gamma + i K',  \gamma\in \R \quad {\rm for} \ \sn \beta \in \R, \sn \beta \ge 1/q.
$$ 
In terms of real parameters $\gamma$ the uniformization (\ref{eq:sphere_ring_uniformization}) is given by
\begin{equation}
\label{eq:hyperbolic_ring_uniformization_gamma}
\cosh R=\frac{1}{q\sn (\gamma,q)} ,\ \tanh R= \dn (\gamma,q), \ \sinh r=\frac{\cn (\gamma, q)}{\sn (\gamma,q)}.
\end{equation}

\begin{proposition}
\label{prop:hyperbolic_ring_uniformization}
The formulas (\ref{eq:hyperbolic_ring_uniformization_beta}) or (\ref{eq:hyperbolic_ring_uniformization_gamma})
describe a one to one correspondence between the radii $(r,R)\in [-\infty, +\infty] \times [R_0,+\infty]$ of a hyperbolic ring with the parameter $q\le 1$ given by (\ref{eq:q_hyperbolic}) and the variable $\beta=\gamma +iK'$ with $\gamma\in [0, 2K] $ of Jacobi functions of modulus $q$. Here $\cosh R_0= 1/q$. We control the sign by requiring $r=R$ for $q=1$.
\end{proposition}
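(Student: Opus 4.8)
The plan is to mirror the structure of Proposition~\ref{prop:sphere_ring_uniformization}, adapting each step to the hyperbolic setting. First I would establish the constant $q$: from the hyperbolic Pythagoras relation $\cosh R\cosh r_k=\cosh r\cosh R_k$ applied to every orthogonally intersecting pair one reads off that $\cosh r/\cosh R$ is the same for all rings, giving $\cosh r=q\cosh R$ with $q\le 1$ since $r\le R$ in absolute value (the inner circle is smaller). I would note $R\ge R_0$ where $\cosh R_0=1/q$, which is forced by $\cosh r\ge 1$.

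Next I would verify that the formulas (\ref{eq:hyperbolic_ring_uniformization_gamma}) are internally consistent, i.e. that they do parametrize a genuine ring with the right $q$. The key Jacobi identities are $\dn^2\gamma+q^2\sn^2\gamma=1$, which gives $\tanh^2 R+q^2\sn^2\gamma\cosh^{-2}R\cdot(\text{something})$ — more precisely, from $\cosh R=1/(q\sn\gamma)$ one computes $\sinh^2 R=\cosh^2R-1=(1-q^2\sn^2\gamma)/(q^2\sn^2\gamma)=\cn^2\gamma/(q^2\sn^2\gamma)$ using $\cn^2=1-\sn^2$, hence $\tanh^2 R=\cn^2\gamma/(1-q^2\sn^2\gamma)$; one must then check this equals $\dn^2\gamma$, i.e. $\dn^2\gamma(1-q^2\sn^2\gamma)=\cn^2\gamma$, which follows from $\dn^2\gamma=1-q^2\sn^2\gamma$ and $\cn^2\gamma=1-\sn^2\gamma$ after a short manipulation (indeed $\dn^2(1-q^2\sn^2)=(1-q^2\sn^2)^2$ versus $\cn^2=1-\sn^2$ — I would recheck the precise normalization here). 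Then $\sinh r=\cn\gamma/\sn\gamma$ together with $\cosh r=\sqrt{1+\sinh^2 r}=\sqrt{(\sn^2\gamma+\cn^2\gamma)}/\sn\gamma=1/\sn\gamma\cdot\sqrt{1-q^2\sn^2\gamma+\cdots}$ should reproduce $\cosh r=q\cosh R=1/\sn\gamma$ after using $\sn^2+\cn^2=1$; again the modulus-$q$ identity $\dn^2=1-q^2\sn^2$ is what makes it work. The equivalence of the $\beta=\gamma+iK'$ form (\ref{eq:hyperbolic_ring_uniformization_beta}) with the $\gamma$ form is then just the standard imaginary-quarter-period shift formulas $\sn(\gamma+iK')=1/(q\sn\gamma)$, $\cn(\gamma+iK')=-i\dn\gamma/(q\sn\gamma)$, $\dn(\gamma+iK')=-i\cn\gamma/\sn\gamma$, which I would quote.

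For the bijectivity claim I would argue monotonicity. On $\gamma\in(0,2K)$, by symmetry it suffices to treat $\gamma\in(0,K]$ and use reflection. As $\gamma$ runs over $(0,2K)$, $\sn\gamma$ increases from $0$ to $1$ and back to $0$, so $\cosh R=1/(q\sn\gamma)$ decreases from $+\infty$ to $1/q=\cosh R_0$ and back to $+\infty$; meanwhile $\sinh r=\cn\gamma/\sn\gamma$ is strictly decreasing from $+\infty$ (at $\gamma\to 0^+$) through $0$ (at $\gamma=K$) to $-\infty$ (at $\gamma\to 2K^-$), so $r$ sweeps $(-\infty,+\infty)$ monotonically and injectively. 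Thus the pair $(r,R)$ traces the curve $\cosh r=q\cosh R$ bijectively, and since $r$ alone is already a monotone coordinate the map $\gamma\mapsto(r,R)$ is a bijection onto $\{(r,R):\cosh r=q\cosh R,\ R\ge R_0\}=[-\infty,+\infty]\times\{R\ge R_0\}$ read off via the constraint. The sign normalization $r=R$ when $q=1$: at $q=1$ one has $\sn\gamma=\tanh\gamma$, $\cn\gamma=\dn\gamma=\operatorname{sech}\gamma$, so $\cosh R=\coth\gamma=\cosh r$ forces $|r|=|R|$, and choosing the branch with $\operatorname{sign}(\sinh r)=\operatorname{sign}(\sinh R)$, i.e. $r>0\iff\gamma\in(0,K)$, pins down $r=R$; this is the convention that fixes the otherwise-free sign of $r$ throughout.

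The main obstacle, as in the spherical proof, is bookkeeping of signs and branches rather than any deep difficulty: one must make sure that the square roots taken when passing between $\sinh$, $\cosh$, $\tanh$ are assigned consistently, that $i\cn\beta$ and $i\dn\beta$ are indeed real (which holds precisely because $\beta=\gamma+iK'$, where $\cn$ and $\dn$ become pure imaginary), and that the interval $\gamma\in[0,2K]$ — rather than $[0,4K]$ as in the spherical case — is the correct fundamental domain because here $r$ ranges over all of $\R$ in a single monotone pass while in the spherical case $r$ wraps around the circle. I would present the computation compactly, leaning on the addition/quarter-period formulas already recorded in the paper, and flag the $q=1$ degeneration as the place where the sign convention is anchored.
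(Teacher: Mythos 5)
Your proposal is correct and follows essentially the same route as the paper's (much terser) proof: check that the formulas in (\ref{eq:hyperbolic_ring_uniformization_gamma}) are consistent with $\cosh r = q\cosh R$ via the Jacobi identities, note that $\cosh r = 1/\sn\gamma$, and observe that $r$ (equivalently $\sinh r=\cn\gamma/\sn\gamma$, which is strictly monotone on $(0,2K)$) pins down $\gamma$ uniquely while $R$ alone would not. The one slip is writing $1-q^2\sn^2\gamma=\cn^2\gamma$ where the correct identity is $1-q^2\sn^2\gamma=\dn^2\gamma$ --- you rightly flagged this for rechecking, and with the correct identity $\tanh^2 R=\dn^2\gamma$ follows immediately with no further manipulation.
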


\begin{proof}
The first two equations relate $R$ and $\gamma\in [0, 2K] $.  
Then the last formula in (\ref{eq:hyperbolic_ring_uniformization_gamma}) determines $r$, which satisfies $\cosh r= \frac{1}{\sn \gamma}$. This implies (\ref{eq:q_hyperbolic}). We note that $r$ may be negative. 
\end{proof}

The angles in a right-angled hyperbolic triangle may be computed using the
following formulas:
\begin{align*}
 \varphi_k = \arctan \frac{\tanh R_k}{\sinh r}
 \quad \text{and} \quad   
\psi_k  = \arctan \frac{\tanh r_k}{\sinh R}.
\end{align*}

In terms of the elliptic function parametrization the angles are given by
\begin{align}
  \varphi_k &= 
  \arctan\left( 
    \frac{\cn \beta_k}{\sn \beta_k \dn \beta}
  \right)
	    &\text{and}&&
  \psi_k &=
  \arctan\left(
  \frac{\dn \beta_k}{q\sn \beta_k \cn \beta}
  \right).
\end{align}

They can be transformed to the following form
\begin{eqnarray*}
  \varphi_k &=& \arg \left(1-\frac{\dn (\beta_k+iK')}{\dn \beta}\right),\\
  \psi_k &= &\arg \left(1-\frac{\cn (\beta_k+iK')}{\cn \beta}\right). 
\end{eqnarray*}

For the sum this implies
$$
\theta_k=\varphi_k+\psi_k= \arg \left( ( \frac{-1}{\cn\beta \dn\beta}) (\frac{\cn\beta-\cn (\beta_k+iK')}{\dn\beta +\dn (\beta_k+iK')})\right).
$$
Note that $ \cn (\beta_k+iK')$ is real and $\cn (\beta)$ is pure imaginary. 

The first factor in this formula is positive for $r>0$ and negative for $r<0$. Applying the addition formulas (\ref{eq:addition_formulas}) we obtain
\begin{equation}
\label{eq:theta_beta_hyp}
\theta_k=\arg\left(-\text{sign}(r)\frac{\sn\frac{\beta+\beta_k+ iK'}{2}}{\sn\frac{\beta-\beta_k+ iK'}{2}}\right).
\end{equation}
It turns on that orthogonal hyperbolic and spherical ring patterns are governed by the same complex equation. The following theorem is analog to theorem~\ref{thm:spherical}. 
\begin{theorem} 
\label{thm:hyperbolic}
Rings build a generalized orthogonal ring pattern in hyperbolic plane if and only if they are given by the variables $\beta=\gamma+iK'$ with $\gamma\in[0,2K]$ satisfying equation (\ref{eq:spherical}) for every internal vertex of $G$.
  \end{theorem}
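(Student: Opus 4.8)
The plan is to mirror the argument for Theorem~\ref{thm:spherical} as closely as possible, exploiting the fact — just observed in the text via formula (\ref{eq:theta_beta_hyp}) — that the opening angle $\theta_k$ of an orthogonal quadrilateral spanned by two hyperbolic rings is given by exactly the same expression $\arg\bigl(-\mathrm{sign}(r)\,\sn\frac{\beta+\beta_k+iK'}{2}/\sn\frac{\beta-\beta_k+iK'}{2}\bigr)$ as in the spherical case. First I would establish the direction "pattern $\Rightarrow$ equation": given a generalized orthogonal ring pattern in the hyperbolic plane, every internal ring possesses a ring flower, so the opening angles $\theta_k$ around the corresponding internal vertex $v$ must sum to $2N\pi$ (where $N=V(v)$ is the number of neighboring rings, here $N=4$ for an internal vertex of $G$). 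Summing (\ref{eq:theta_beta_hyp}) over the four neighbors and using that $\sum_k \theta_k \equiv 0 \pmod{2\pi}$ forces the product $\prod_{v_k\sim v}\sn\frac{\beta-\beta_k+iK'}{2}\big/\sn\frac{\beta+\beta_k+iK'}{2}$ to be real and positive; a parity/count of the $-\mathrm{sign}(r)$ factors (there are four of them, so their product is $+1$) together with a monotonicity or winding argument pins this real positive number down to be exactly $1$, which is precisely equation (\ref{eq:spherical}).

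For the converse, "equation $\Rightarrow$ pattern", I would run the construction of the ring flower: starting from the $\beta$-values (equivalently $\gamma$-values via (\ref{eq:hyperbolic_ring_uniformization_gamma})) one builds the five rings with the prescribed radii and glues neighboring rings along their orthogonal intersections; equation (\ref{eq:spherical}) guarantees that the accumulated angle closes up after going once around the central ring, so the last ring matches the first and the flower is consistent. This is the same bookkeeping as in Theorem~\ref{thm:spherical}, and I would simply cite that argument and point out that the hyperbolic Pythagoras relation and the uniformization (\ref{eq:hyperbolic_ring_uniformization_beta}) make all the relevant trigonometric identities go through verbatim.

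The main obstacle is the branch-of-argument / sign bookkeeping. In the hyperbolic case one has $\cn\beta_k(+iK')$ real while $\cn\beta$ is purely imaginary — the opposite pattern of "which quantity is real" compared to the spherical case — and the domain is now $\beta=\gamma+iK'$ with $\gamma\in[0,2K]$ rather than $\beta$ real. So while the final complex equation (\ref{eq:spherical}) is literally the same, I must check carefully that the individual $\theta_k$ land in the correct interval (an analog of $-\pi<\theta_k<\pi$ from (\ref{eq:angle_tmp})) so that "$\sum\theta_k\in 2\pi\Z$" can be upgraded to the exact value $\pm2\pi$, and hence to the multiplicative identity with right-hand side exactly $1$ rather than a root of unity or a positive real $\neq 1$. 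Verifying these range constraints — using the explicit behavior of $g$ and the fact that $\gamma\in[0,2K]$ — is where the real work lies; everything else is a transcription of the spherical proof.
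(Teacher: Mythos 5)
Your proposal follows the paper's route exactly: the paper proves this theorem only by analogy with Theorem~\ref{thm:spherical}, deriving the angle formula (\ref{eq:theta_beta_hyp}) and observing that equation (\ref{eq:spherical}) encodes the condition that the opening angles around an internal vertex sum to an integer multiple of $2\pi$, which is precisely the flower condition defining a generalized pattern. One correction to your sign/branch bookkeeping: the step upgrading ``the product is a positive real'' to ``the product equals $1$'' is not a monotonicity or winding argument but the identity $|\sn(\tfrac{x+iK'}{2})|=1/\sqrt{q}$ for real $x$ (recorded just before Theorem~\ref{thm:spherical}); since $\sn$ has period $2iK'$ and real Taylor coefficients, with $\beta=\gamma+iK'$ both the numerator and the denominator of each factor reduce to values of $\sn$ at conjugate points of this form, so every factor has modulus $1$ in the hyperbolic case as well.
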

  
  In $\gamma$-variables the governing equation for hyperbolic patterns is
   \begin{equation}  \label{eq:hyperbolic}
    \prod_{v_k \sim v} 
  \dfrac{  \sn (\dfrac{\gamma - \gamma_k + i K'}{2})}{ \sn (\dfrac{\gamma + \gamma_k - i K'}{2})}=1.
  \end{equation}

Formula (\ref{eq:theta_beta_hyp}) implies
\begin{eqnarray*}
  \theta_k &=& -\arg\left(\sn\frac{\gamma+\gamma_k+ iK'}{2} \sn\frac{\gamma-\gamma_k+ iK'}{2}\right) +\pi, 
  \quad \text{if}\ r>0, 
  \label{eq:angle_positive_r_hyp}\\
 \theta_k &=& -\arg\left(\sn\frac{\gamma+\gamma_k+ iK'}{2} \sn\frac{\gamma-\gamma_k+ iK'}{2}\right), 
  \quad \text{if}\ r<0. 
   \label{eq:angle_negative_r_hyp}
 \end{eqnarray*}
 
 \begin{lemma}
\label{lem:angle_hyperbolic}
For two orthogonally intersecting hyperbolic rings and for any orientation of their inner circles  $c$ and $c_k$ the opening angle $ \theta_k$ is given by
\begin{eqnarray}
\theta_k &=& g(\gamma-\gamma_k) + g(\gamma+\gamma_k), \quad \text{if}\  r\ge 0 \Leftrightarrow \gamma\in [0, K],
\label{eq:angle_positive_r_exact_gamma}\\
\theta_k &=& g(\gamma-\gamma_k) + g(\gamma+\gamma_k)- \pi, \quad \text{if}\  r\le 0  \Leftrightarrow \gamma\in [K, 2K],
\label{eq:angle_negative_r_exact_gamma}
\end{eqnarray}
where $g(x)$ is the function (\ref{eq:g(x)}).
\end{lemma}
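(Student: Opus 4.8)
The plan is to pass from the already-established complex formula (\ref{eq:theta_beta_hyp}) to the two stated real formulas by expressing the argument of a product of $\sn$-values through the function $g$, and then to fix the branch of the resulting identity (which a priori holds only modulo $2\pi$) by a range argument, in the same spirit as the proof of Lemma~\ref{lem:angle_spherical}.

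The first step is to rewrite things in the $\gamma$-variables. Since $\beta=\gamma+iK'$, $\beta_k=\gamma_k+iK'$ and $\sn(z+iK')=1/(q\sn z)$, formula (\ref{eq:theta_beta_hyp}) turns into
\[
\theta_k\ \equiv\ \arg(-\text{sign}(r))\ -\ \arg\!\left(\sn\!\left(\tfrac{\gamma+\gamma_k+iK'}{2}\right)\sn\!\left(\tfrac{\gamma-\gamma_k+iK'}{2}\right)\right)\pmod{2\pi},
\]
which reproduces the two displayed identities for $\theta_k$ obtained above from (\ref{eq:theta_beta_hyp}). By the definition (\ref{eq:g(x)}) of $g$ one has $\arg\sn\!\left(\tfrac{x+iK'}{2}\right)=\tfrac{\pi}{2}-g(x)$ for every real $x$, so modulo $2\pi$ the second argument above equals $\pi-g(\gamma+\gamma_k)-g(\gamma-\gamma_k)$. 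Together with $\arg(-\text{sign}(r))=\pi$ for $r>0$ and $0$ for $r<0$, this shows that $\theta_k$ coincides with the right-hand side of (\ref{eq:angle_positive_r_exact_gamma}) when $r>0$ and with that of (\ref{eq:angle_negative_r_exact_gamma}) when $r<0$, modulo $2\pi$. It remains to remove this ambiguity.

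For the geometric quantity, $\theta_k=\varphi_k+\psi_k$ with $\varphi_k=\arg\!\left(1-\dn(\beta_k+iK')/\dn\beta\right)$ and $\psi_k=\arg\!\left(1-\cn(\beta_k+iK')/\cn\beta\right)$; since $\dn(\beta_k+iK')$ and $\cn(\beta_k+iK')$ are real while $\dn\beta$ and $\cn\beta$ are purely imaginary (by (\ref{eq:hyperbolic_ring_uniformization_beta})), each of these complex numbers has real part $1$, so $\varphi_k,\psi_k\in(-\tfrac{\pi}{2},\tfrac{\pi}{2})$ and hence $\theta_k\in(-\pi,\pi)$. For the elliptic expressions I would use the properties of $g$ recorded in Section~\ref{sec:ring_patterns}: $g$ is strictly increasing, $g(0)=0$, $g(x+4K)=g(x)+\pi$ and $g(-x)=-g(x)$; in particular $g(4K)=\pi$, $g(2K)=\tfrac{\pi}{2}$, $g(3K)=\pi-g(K)$ and $0<g(K)<\tfrac{\pi}{2}$. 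If $\gamma\in[0,K]$ and $\gamma_k\in[0,2K]$, then $\gamma+\gamma_k\in[0,3K]$ and $\gamma-\gamma_k\in[-2K,K]$, so monotonicity gives
\[
-\tfrac{\pi}{2}\ \le\ g(\gamma+\gamma_k)+g(\gamma-\gamma_k)\ \le\ g(3K)+g(K)=\pi,
\]
and the upper bound cannot be attained, since $\gamma+\gamma_k=3K$ and $\gamma-\gamma_k=K$ cannot hold simultaneously for $\gamma\le K$; thus this sum lies in $(-\pi,\pi)$. When $\gamma\in[K,2K]$ and $\gamma_k\in[0,2K]$, one has $\gamma+\gamma_k\in[K,4K]$ and $\gamma-\gamma_k\in[-K,2K]$, the same reasoning gives $g(\gamma+\gamma_k)+g(\gamma-\gamma_k)\in(0,\tfrac{3\pi}{2})$, and subtracting $\pi$ places it in $(-\pi,\pi)$. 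Since two numbers in the open interval $(-\pi,\pi)$ that are congruent modulo $2\pi$ must be equal, the asserted formulas follow.

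The only delicate point is this last range estimate: $g$ reaches the value $\pi$ on $[0,4K]$, so a priori the sum of two $g$-values could leave $(-\pi,\pi)$; what rescues the argument is that on the admissible rectangles the two arguments $\gamma+\gamma_k$ and $\gamma-\gamma_k$ cannot be extremal at the same time. I expect this interval bookkeeping — together with keeping the case distinction on the sign of $r$ straight — to be the main thing to watch; everything else reduces to the definition of $g$ and the Jacobi addition formulas already used.
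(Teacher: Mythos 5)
Your proposal is correct and follows essentially the same route the paper takes: the paper derives the mod-$2\pi$ identities from (\ref{eq:theta_beta_hyp}) and then (as in the proof of the spherical Lemma~\ref{lem:angle_spherical}, to which the hyperbolic case is explicitly declared analogous) fixes the branch by the geometric bound $-\pi<\theta_k<\pi$ together with the range of $g$ on the admissible $\gamma$-intervals. Your interval bookkeeping, including the observation that $\gamma+\gamma_k$ and $\gamma-\gamma_k$ cannot be simultaneously extremal, is the correct way to make that branch argument precise.
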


Analogously to the spherical case we obtain
\begin{theorem}
\label{thm:gamma-equations}
Uniformizing variables $\gamma$ determine a hyperbolic orthogonal ring pattern if and only if they lie in the interval $\gamma\in [0,2K]$  and for all internal vertices satisfy the condition
\begin{equation}
\label{eq:angle_internal_gamma}
\sum_{k=1}^4 g(\gamma+\gamma_k)+g(\gamma-\gamma_k)=2\pi,
\end{equation}
where the sum is taken over all four neighboring rings. Here $g(x)$ is the function (\ref{eq:g(x)}).

For a boundary vertice $v$ the variable $\gamma$ satisfies 
\begin{eqnarray}
\label{eq:angle_boundary_gamma}
\sum g(\gamma-\gamma_k)+g(\gamma+\gamma_k)=\Theta (v), \quad \text{if}\  r>0,  \\
\sum g(\gamma-\gamma_k)+g(\gamma+\gamma_k)=\Theta (v)+ \pi V(v), \quad \text{if}\  r<0, \nonumber
\end{eqnarray}
where $\Theta (v)$ is the total nominal angle at the vertex (positive for $r>0$ and negative for $r<0$), and $V(v)$ is the valence of $v$, i.e. the number of neighboring rings orthogonal to the ring centered at $v$. 
\end{theorem}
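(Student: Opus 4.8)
\textbf{Proof proposal for Theorem~\ref{thm:gamma-equations}.}
The plan is to mirror the spherical derivation (Theorem~\ref{thm:beta-equations}) step by step, using the hyperbolic uniformization of Proposition~\ref{prop:hyperbolic_ring_uniformization} and Lemma~\ref{lem:angle_hyperbolic} as the building blocks, so that the only genuinely new work lies in tracking signs and in expressing the cone-angle conditions through the function $g$ of (\ref{eq:g(x)}). First I would recall that Proposition~\ref{prop:hyperbolic_ring_uniformization} gives a bijection between admissible radii $(r,R)$ of a hyperbolic ring and the parameter $\gamma\in[0,2K]$, so the $\gamma$-variables genuinely parametrize the pattern and the constraint $\gamma\in[0,2K]$ is exactly the range in which every ring is a bona fide hyperbolic ring; moreover the sign of $r$ is recorded by whether $\gamma\in[0,K]$ or $\gamma\in[K,2K]$, as stated in Lemma~\ref{lem:angle_hyperbolic}. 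This handles the ``if and only if'' on the level of individual rings; what remains is the compatibility around vertices.

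Next I would reduce the flower/ring-pattern condition to a statement about the cone angle $\Theta=\sum_k\theta_k$. As in the spherical case, conditions (1) and (2) of Definition~\ref{def:ring_pattern} at an internal vertex $v$ are equivalent to requiring that the four opening angles $\theta_k$ of the orthogonal kites (Fig.~\ref{fig:kite}) close up, i.e. $\Theta(v)=2\pi$ when the central ring has $r>0$ and $\Theta(v)=-2\pi$ when $r<0$; by Theorem~\ref{thm:hyperbolic} this is precisely equation (\ref{eq:spherical}) in the $\beta=\gamma+iK'$ variables, equivalently (\ref{eq:hyperbolic}) in the $\gamma$ variables. Then I would substitute the exact formulas for $\theta_k$ from Lemma~\ref{lem:angle_hyperbolic}: for $r>0$ (so $\gamma\in[0,K]$), $\theta_k=g(\gamma-\gamma_k)+g(\gamma+\gamma_k)$, and summing over the four neighbours and setting $\Theta=2\pi$ gives exactly (\ref{eq:angle_internal_gamma}). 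For $r<0$ (so $\gamma\in[K,2K]$), each $\theta_k$ carries the extra $-\pi$, so $\sum_k\theta_k=\sum_k\bigl(g(\gamma+\gamma_k)+g(\gamma-\gamma_k)\bigr)-4\pi=-2\pi$, which again rearranges to (\ref{eq:angle_internal_gamma}); thus the single equation (\ref{eq:angle_internal_gamma}) covers both orientations, which is the key economy of the $g$-formulation. For a boundary vertex $v$ the same substitution, now with an arbitrary prescribed total angle $\Theta(v)$ and valence $V(v)=\#\{k:v_k\sim v\}$, yields $\sum_k\bigl(g(\gamma-\gamma_k)+g(\gamma+\gamma_k)\bigr)=\Theta(v)$ in the $r>0$ case and the same left-hand side $=\Theta(v)+\pi V(v)$ in the $r<0$ case, because each of the $V(v)$ terms contributes an extra $-\pi$; this is (\ref{eq:angle_boundary_gamma}).

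Finally I would check that the constraint $\gamma\in[0,2K]$ is not only necessary but, together with the angle equations, sufficient: given $\gamma$'s in this range satisfying (\ref{eq:angle_internal_gamma}) at every internal vertex, Lemma~\ref{lem:angle_hyperbolic} reconstructs each opening angle $\theta_k$, and because $\gamma_k\in[0,2K]$ these values automatically lie in the geometrically admissible window (the analogue of (\ref{eq:angle_tmp}); this is the point where the branch of $\arg$ is pinned down, exactly as in the proof of Lemma~\ref{lem:angle_spherical}), so that the flowers can actually be built and glued consistently around the simply connected complex $\Pi$. I expect the main obstacle to be precisely this branch-tracking: verifying that with $\gamma,\gamma_k\in[0,2K]$ the expressions $g(\gamma\pm\gamma_k)$ always select the correct sheet so that the identities of Lemma~\ref{lem:angle_hyperbolic} hold on the nose (not merely modulo $2\pi$), and hence that the equation (\ref{eq:angle_internal_gamma}) with right-hand side exactly $2\pi$ — rather than $2\pi$ plus an a priori unknown multiple of $2\pi$ — is equivalent to the closing-up of the flower. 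Once the range of $g$ on $[-2K,2K]$ and its monotonicity (\ref{eq:g'_1}) are used to control these sheets, the rest is the routine rearrangement indicated above.
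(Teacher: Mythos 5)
Your proposal is correct and follows essentially the same route as the paper, which gives no explicit proof here but simply states the result follows ``analogously to the spherical case'': one combines the exact angle formulas of Lemma~\ref{lem:angle_hyperbolic} with the cone-angle condition $\Theta=\pm 2\pi$ at internal vertices (and the prescribed $\Theta(v)$ at boundary vertices), and your sign bookkeeping — in particular the observation that the $-\pi$ per leg in the $r<0$ case exactly cancels against $\Theta=-2\pi$ so that a single equation (\ref{eq:angle_internal_gamma}) covers both orientations — reproduces the stated equations. Your closing remark about pinning down the branch of $\arg$ so that (\ref{eq:angle_internal_gamma}) holds with right-hand side exactly $2\pi$ is precisely the role played by the bound (\ref{eq:angle_tmp}) in the proof of Lemma~\ref{lem:angle_spherical}, so nothing essential is missing.
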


\section{Circle pattern limit}
\label{sec:cp_limit}

For $q=1$ both circles of the ring coincide $R=r$, and we obtain orthogonal circle patterns in the sphere and in the hyperbolic plane.
The Jacobi elliptic functions become hyperbolic functions in this case:
\begin{equation}
\label{eq:Jacobi_q=1}
\sn(z,1)=\tanh z,\ \cn (z,1)=\dn (z,1)=\frac{1}{\cosh z},\ K'=\frac{\pi}{2}, \ K\to \infty.
\end{equation}

The function $g(x)$ given by (\ref{eq:g(x)_2}) also simplifies:
$$
g(x)=\arctan \sinh x=-\frac{\pi}{2}+2\arctan e^x.
$$
The last identity follows from the addition formula
$$
\arctan a +\arctan b =\arctan \frac{a+b}{1-ab}.
$$


The relation between the $\beta$-variables and the spherical
radii is the following:
\begin{equation}
\label{eq:q=1_spherical}
  \tanh \beta = \cos R, \frac{1}{\cosh \beta}=\sin R \quad\Rightarrow\quad e^\beta = \cot \frac{R}{2}.
\end{equation}

The circles are positively oriented, and formula (\ref{eq:angle_positive_r_exact}) for the angles $\theta_k$ becomes
\begin{eqnarray*}
\theta_k=\pi + 2\arctan e^{\beta-\beta_k}- 2\arctan e^{\beta +\beta_k}=
2\arctan e^{\beta-\beta_k}+2\arctan e^{-\beta -\beta_k}.
\end{eqnarray*}


In the case $q=1$, and $\beta=\gamma +\frac{\pi}{2}$ formulas (\ref{eq:hyperbolic_ring_uniformization_gamma}) give the following representation for the hyperbolic radii:
\begin{equation}
\label{eq:q=1_hyperbolic}
\cosh R= \coth \gamma, \ \sinh R= \frac{1}{\sinh \gamma} \quad\Rightarrow\quad e^\gamma = \coth \frac{R}{2}.
\end{equation}

The circles are positively oriented, and formula (\ref{eq:angle_positive_r_exact_gamma}) becomes
\begin{eqnarray*}
\theta_k=
2\arctan e^{\gamma_k-\gamma}+2\arctan e^{-\gamma -\gamma_k}.
\end{eqnarray*}
In this limiting case the variables $\beta$ and $\gamma$ differ by the sign from the canonical variables $\rho$ used in \cite{BHoSp06} and \cite{BSp04}.

\section{Variational description}
\label{sec:variational}
Orthogonal ring patterns can be described using a variational principle.

Let us consider the anti-derivative of $g(x)$
\begin{eqnarray}
\label{eq:F(x)}
& & F(x)= \int_0^x g(u)du=\\
&  &\int_0^x \frac{\pi}{2} -\arg\sn\frac{u+iK'}{2} du=\int_0^x \arctan \frac{(1+q)\sn\frac{u}{2}}{\cn\frac{u}{2}\dn\frac{u}{2}}du.
\nonumber
\end{eqnarray}
$F(x)$ is an even convex function with $F(0)=0, F'(x)=g(x)$ and
\begin{equation}
\label{eq:F''}
F''(x)=\frac{1}{2}(\dn x+q\cn x),
\end{equation}
see (\ref{eq:g'_2}). Its graph for $q=0.8$ is shown in Fig.~\ref{fig:F(x)}.
\begin{figure}[t]
  \centering
  \includegraphics[width=.4\linewidth]{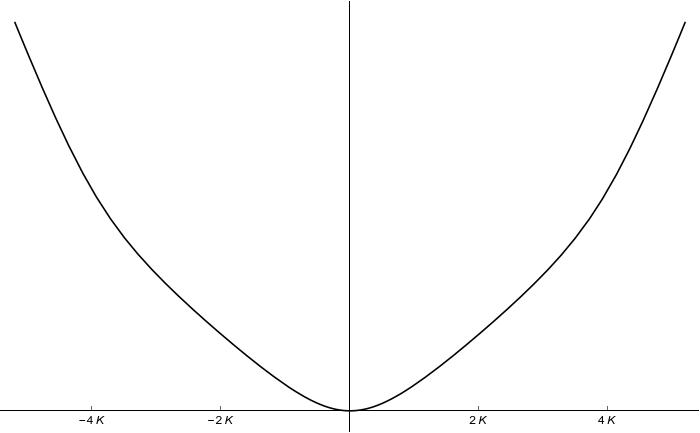}
  \caption{Graph of $F(x)$ for $q=0.8$.}
  \label{fig:F(x)}
\end{figure}

\subsection*{In the sphere}

Consider the functional
\begin{equation}
\label{eq:functional_spherical}
S_{sph}(\beta):=\sum_{(jk)} \left( F(\beta_j-\beta_k)-F(\beta_j+\beta_k)\right) +\sum_j\Phi_j\beta_j,
\end{equation}
where the first sum is taken over all edges and the second sum over all vertices of $G$, and $\Phi_j$ are some prescribed parameters at the vertices. 

\begin{theorem}
\label{thm:variational_spherical}

The critical points $\beta$ of the functional $S_{sph}$ with all $\beta_j\in [0,2K]$ and 
\begin{eqnarray}
&\Phi_j=2\pi & \text{for  inner rings}, \nonumber \\
&\Phi_j=\pi V(j)-\Theta(j) &  \text{for positively oriented boundary rings},
\label{eq:Phi_j_sph}     \\
&\Phi_j=-\Theta(j) &  \text{for negatively oriented boundary rings}. \nonumber
 \end{eqnarray}
 correspond to spherical orthogonal ring patterns with all $R_j\le\frac{\pi}{2}$.
 Here $V(j)$ and $\Theta(j)$ are the number of  the rings neighboring to the $j$-th boundary ring and  the nominal angle covered by these rings respectively. 
 
The second derivative of $S_{sph}$ is the quadratic form
 \begin{eqnarray}
 \label{eq:hessian_spherical}
 D^2 S_{sph}= &\frac{1}{2}\sum_{(j,k)} (\dn(\beta_j-\beta_k)+q\cn(\beta_j-\beta_k))(d\beta_j-d\beta_k)^2-\\
&(\dn(\beta_j+\beta_k)+q\cn(\beta_j+\beta_k))(d\beta_j+d\beta_k)^2,\nonumber
 \end{eqnarray}
 where the sum is taken over pairs of neighboring rings.
 \end{theorem}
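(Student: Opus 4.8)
The plan is to verify the two claims separately. For the statement about critical points, I would compute the partial derivative of $S_{sph}$ with respect to $\beta_j$ directly. Since $F'=g$ and $F$ is even, differentiating the edge sum $\sum_{(jk)}(F(\beta_j-\beta_k)-F(\beta_j+\beta_k))$ with respect to $\beta_j$ produces, for each neighbor $v_k\sim v_j$, the term $g(\beta_j-\beta_k)-g(\beta_j+\beta_k)$ (the chain rule gives a $+1$ from the first argument and a $-1$ from the second, but $g$ is odd so both contributions have the same sign after rewriting). Hence
\[
\frac{\partial S_{sph}}{\partial \beta_j}=\sum_{v_k\sim v_j}\bigl(g(\beta_j-\beta_k)-g(\beta_j+\beta_k)\bigr)+\Phi_j.
\]
Setting this to zero and comparing with the internal-vertex equation \eqref{eq:angle_internal_beta} and the boundary equations \eqref{eq:angle_boundary_beta} of Theorem~\ref{thm:beta-equations}, one sees immediately that the choice of $\Phi_j$ in \eqref{eq:Phi_j_sph} makes the critical point equations coincide exactly with the ring-pattern equations. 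The case distinction on the sign of $r_j$ (equivalently $\beta_j\in[0,K]$ vs.\ $[K,2K]$) is absorbed into the definition of $\Theta(j)$ and the $\pi V(j)$ term, so no further work is needed beyond bookkeeping. One should also note that the constraint $\beta_j\in[0,2K]$ is exactly the range in which Lemma~\ref{lem:angle_spherical} applies, which is why we restrict the functional to this domain.

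For the Hessian, I would differentiate the gradient computed above once more. The second derivative of $F$ is $F''(x)=\tfrac12(\dn x+q\cn x)$ by \eqref{eq:F''}. Writing $w(x):=\tfrac12(\dn x+q\cn x)$, the edge term $F(\beta_j-\beta_k)-F(\beta_j+\beta_k)$ contributes to the bilinear form $D^2S_{sph}$ the expression $w(\beta_j-\beta_k)(d\beta_j-d\beta_k)^2-w(\beta_j+\beta_k)(d\beta_j+d\beta_k)^2$, since $\frac{\partial^2}{\partial\beta\,\partial\beta'}F(\beta-\beta')=-F''(\beta-\beta')$ etc.; summing over all edges and noting that the linear term $\sum_j\Phi_j\beta_j$ has vanishing second derivative yields \eqref{eq:hessian_spherical}. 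This is a purely formal computation: expand $dF$ of a function of $\beta_j\pm\beta_k$ by the chain rule and collect the quadratic differentials.

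The main obstacle is not analytic but organizational: making sure that the sign conventions in \eqref{eq:Phi_j_sph} are matched consistently with the two cases of Theorem~\ref{thm:beta-equations} and with the sign of the orientation, and checking that the restriction $\beta_j\in[0,2K]$ (which guarantees $R_j\le\pi/2$ via Proposition~\ref{prop:sphere_ring_uniformization} and the table) is preserved — i.e., that a critical point in the interior of this box really does correspond to a genuine ring pattern rather than a spurious solution on the boundary of the domain. I would address this by invoking Lemma~\ref{lem:angle_spherical}, whose validity is exactly conditioned on $\beta_j,\beta_k\in[0,2K]$, so that on this domain the formal critical-point equation is equivalent to the geometric cone-angle condition \eqref{eq:Theta}, and hence to the existence of ring flowers at all interior vertices with the prescribed boundary data.
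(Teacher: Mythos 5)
Your proposal is correct and follows essentially the same route as the paper: compute the gradient using $F'=g$ and the parity of $F$, identify the critical-point equations with \eqref{eq:angle_internal_beta} and \eqref{eq:angle_boundary_beta} via the choice \eqref{eq:Phi_j_sph}, and obtain the Hessian from $F''(x)=\tfrac12(\dn x+q\cn x)$. Your additional remarks on the role of the restriction $\beta_j\in[0,2K]$ and Lemma~\ref{lem:angle_spherical} only make explicit what the paper leaves implicit.
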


\begin{proof}
We obtain 
$$
\frac{\partial S_{sph}(\beta)}{\partial\beta_j}=\sum_{\text{neighbors}\ k} \left(g(\beta_j-\beta_k)-g(\beta_j+\beta_k)\right) +\Phi_j,  
$$
which coincides with (\ref{eq:angle_internal_beta}, \ref{eq:angle_boundary_beta}) for $\Phi_j$ chosen by (\ref{eq:Phi_j_sph}).

Equation (\ref{eq:hessian_spherical}) for the second derivative of $S_{sph}$ is obtained by taking the second derivative in the first sum of equation (\ref{eq:functional_spherical}).
\end{proof}

Orthogonal ring patterns with the prescribed boundary angles  $\Theta(j)$ and all rings satisfying $R<\frac{\pi}{2}$ are in one to one correspondence with the critical points of this functional lying in the interval $\beta\in [0, 2K]$. 

In the case of orthogonal circle patterns $q=1$ we obtain
$$
F(x)=\int_0^x \arctan \sinh u du= -\frac{\pi}{2}x+2\int_0^x\arctan e^u du.
$$
Using its evenness it can be transformed to the form 
$$
F(x)={\mathcal F}(x)+{\mathcal F}(-x)-2{\mathcal F}(0),
$$
where
\begin{equation}
\label{eq:functional_dilogarithm}
{\mathcal F}(x)=\int_{-\infty}^x \arctan e^u du= {\rm Im} {\rm Li}_2(ie^x). 
\end{equation}
The dilogarithm function is defined by ${\rm Li}_2(z)=-\int_0^z \log(1-\xi)d\xi/\xi$.

This gives the functional 
\begin{equation}
\label{eq:functional_spherical_circles}
S_{sph}=\sum_{(jk)} \left( {\mathcal F}(\beta_j-\beta_k)+{\mathcal F}(\beta_k-\beta_j)-
{\mathcal F}(\beta_j+\beta_k)- {\mathcal F}(-\beta_j-\beta_k)\right) +\sum_j\Phi_j\gamma_j, 
\end{equation}
used in \cite{BHoSp06} for orthogonal circle patterns (note that $\beta=-\rho$). Its second derivative is
\begin{eqnarray*}
 D^2 S_{sph}= \sum_{(j,k)} \frac{1}{\cosh(\beta_j-\beta_k)}(d\beta_j-d\beta_k)^2-
 \frac{1}{\cosh(\beta_j+\beta_k)}(d\beta_j+d\beta_k)^2.
 \end{eqnarray*} 

\subsection*{In the hyperbolic plane}

Variational description of hyperbolic ring patterns is similar to the spherical case. Let us introduce the functional
\begin{equation}
\label{eq:functional_hyperbolic}
S_{hyp}(\gamma):=\sum_{(jk)} \left( F(\gamma_j-\gamma_k)+F(\gamma_j+\gamma_k)\right) +\sum_j\Phi_j\gamma_j.
\end{equation}

Denote by $V_B$ the set of boundary vertices of $G$, i.e. the vertices with less then four neighbors.  
\begin{theorem}
\label{thm:variational_hyperbolic}
Let $\Theta:V_B\to (-2\pi, 2\pi)$ be the prescribed cone angles at the boundary vertices. Assume that 
$|\Theta(j)|<\pi$ for the boundary vertices of valence 1. Then
the critical points of the functional $S_{hyp}$ with
\begin{eqnarray}
&\Phi_j=-2\pi & \text{for  inner rings}, \nonumber \\
&\Phi_j=  -\Theta(j) & \text{for positively oriented boundary rings},
\label{eq:Phi_j_hyp}     \\
&\Phi_j=-\pi V(j)-\Theta(j) &  \text{for negatively oriented boundary rings}, \nonumber
 \end{eqnarray}
 correspond to hyperbolic orthogonal ring patterns. 
Here $V(j)$ is the number of  the rings neighboring to the $j$-th boundary ring.

In particular, if $\gamma$ is a critical point of $S_{hyp}$ then all $\gamma_j\in[0,2K]$. 
 \end{theorem}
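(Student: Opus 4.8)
The plan is to mirror the proof of Theorem~\ref{thm:variational_spherical}, adapted to the hyperbolic functional $S_{hyp}$. First I would compute the gradient: differentiating (\ref{eq:functional_hyperbolic}) with respect to $\gamma_j$ and using $F'=g$ gives
$$
\frac{\partial S_{hyp}}{\partial\gamma_j}=\sum_{\text{neighbors }k}\bigl(g(\gamma_j-\gamma_k)+g(\gamma_j+\gamma_k)\bigr)+\Phi_j,
$$
where I have used that $F$ is even so the edge term $F(\gamma_j+\gamma_k)$ contributes $+g(\gamma_j+\gamma_k)$ and $F(\gamma_j-\gamma_k)$ contributes $+g(\gamma_j-\gamma_k)$ (the sign of the argument being absorbed by oddness of $g$). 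Setting this to zero and inserting the prescribed $\Phi_j$ from (\ref{eq:Phi_j_hyp}), one recovers exactly the internal equation (\ref{eq:angle_internal_gamma}) at inner vertices and the two boundary equations (\ref{eq:angle_boundary_gamma}) according to the sign of $r$. By Theorem~\ref{thm:gamma-equations}, a point $\gamma\in[0,2K]^{V(G)}$ satisfying these is precisely a hyperbolic orthogonal ring pattern, so the correspondence between critical points in $[0,2K]^{V(G)}$ and ring patterns is immediate from the earlier results.

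The substantive part is the last assertion: that every critical point of $S_{hyp}$ automatically lies in $[0,2K]^{V(G)}$, with no constraint imposed by hand. Here I would exploit the global behavior of $g$. Recall from (\ref{eq:g(x)_periodicity})--(\ref{eq:g(x)_odd}) and the monotonicity ($g'>0$) that $g$ is a strictly increasing odd function with $g(x+4K)=g(x)+\pi$; in particular $g(2K)=\pi/2$, $g$ maps $[0,2K]$ onto $[0,\pi/2]$, and more generally $g(x)-\frac{\pi x}{4K}$ is bounded. The idea is a barrier/monotonicity argument at a vertex where $\gamma_j$ is extremal. Suppose, for contradiction, that at a critical point some $\gamma_j$ violates $0\le\gamma_j\le 2K$; take $j$ where $\gamma_j$ is maximal (resp.\ minimal) and suppose $\gamma_j>2K$. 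At such a vertex every neighbor satisfies $\gamma_k\le\gamma_j$, so $\gamma_j-\gamma_k\ge 0$ and $\gamma_j+\gamma_k$ is large; one then shows the left side of the critical equation is forced to be too large (or has the wrong sign) to equal $-\Phi_j\in\{2\pi\}\cup(-2\pi,2\pi)\cup\ldots$, using the sublinear-with-slope-$\pi/4K$ growth of $g$ together with the bound on the number of neighbors ($\le4$) and the hypothesis $|\Theta(j)|<2\pi$ (and $<\pi$ for valence-one boundary vertices). The valence hypotheses are exactly what is needed to close the estimate at low-valence boundary vertices, where there are too few neighbor terms to absorb a large angle.

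I expect the main obstacle to be making the extremal-vertex estimate airtight across all cases: inner vs.\ boundary, positive vs.\ negative orientation, and the different valences $1,2,3$. The cleanest route is probably to prove a single monotonicity lemma — something like: if $\gamma_j\ge\gamma_k$ for all neighbors $k$ and $\gamma_j$ is large, then $\sum_k\bigl(g(\gamma_j-\gamma_k)+g(\gamma_j+\gamma_k)\bigr)$ is strictly larger than the right side allowed by $-\Phi_j$ — and symmetrically for $\gamma_j$ small and negative, using $g(0)=0$ and oddness to get $\sum_k(g(\gamma_j-\gamma_k)+g(\gamma_j+\gamma_k))\le 0$ when all $\gamma_k\ge\gamma_j$ and $\gamma_j\le 0$. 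One then also has to rule out the case where all the $\gamma_j$ are finite but some lie in, say, $(2K,4K)$ without being globally maximal; this is handled by noting that the pattern interpretation via Theorem~\ref{thm:gamma-equations} requires $\gamma\in[0,2K]$ and the functional's critical equation has no other solutions once the extremal-vertex bound pins the maximum and minimum into $[0,2K]$. Once the extremum is trapped in $[0,2K]$, all vertices are, and the proof concludes.
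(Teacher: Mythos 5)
Your first paragraph coincides with the paper's proof: the gradient of $S_{hyp}$ is exactly the left-hand side of (\ref{eq:angle_internal_gamma}), (\ref{eq:angle_boundary_gamma}) with the stated $\Phi_j$, so that part is fine. The issue is the second half, which is the substantive claim. Your barrier argument is left as a sketch precisely at its quantitative core: you never establish the per-neighbor bound that makes the contradiction work, and the tool you propose to establish it --- the approximation of $g$ by $\tilde g(x)=\pi x/(4K)$ with bounded error, i.e.\ ``sublinear-with-slope-$\pi/4K$ growth'' --- cannot do the job. That approximation only controls $g$ up to a bounded oscillation, so it gives no information when $\gamma_j$ is, say, $2K+\delta$ for small $\delta$; yet the statement must exclude \emph{every} $\gamma_j>2K$, not only ``large'' ones (your own text slides from ``suppose $\gamma_j>2K$'' to ``if $\gamma_j$ is large''). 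The missing ingredient is an exact identity, not an asymptotic one: by oddness and $g(x+4K)=g(x)+\pi$ one has, for \emph{every} $\gamma_k$,
\begin{equation*}
g(2K-\gamma_k)+g(2K+\gamma_k)=-g(\gamma_k-2K)+g(\gamma_k-2K)+\pi=\pi ,
\end{equation*}
so monotonicity of $g$ in $\gamma_j$ gives $g(\gamma_j-\gamma_k)+g(\gamma_j+\gamma_k)>\pi$ whenever $\gamma_j>2K$, hence $\partial S_{hyp}/\partial\gamma_j>V(j)\pi+\Phi_j\ge 0$; symmetrically $g(-\gamma_k)+g(\gamma_k)=0$ gives $\partial S_{hyp}/\partial\gamma_j<\Phi_j\le 0$ for $\gamma_j<0$. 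This is the paper's argument, and it is where the hypotheses $\Theta(j)<2\pi$ and $|\Theta(j)|<\pi$ at valence-one vertices enter, as you correctly anticipated.

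A secondary point: the extremal-vertex setup is unnecessary and adds confusion rather than strength. The inequalities above hold at \emph{any} vertex with $\gamma_j>2K$ (or $\gamma_j<0$), with no assumption on the neighbors, so there is nothing to ``pin down'' via a maximum principle, and your worry about coordinates in $(2K,4K)$ that are not globally maximal is moot (if the global maximum is $\le 2K$ and the minimum is $\ge 0$, every coordinate already lies in $[0,2K]$). The appeal to Theorem~\ref{thm:gamma-equations} to dispose of that case is circular as written, since that theorem presupposes $\gamma\in[0,2K]$, which is exactly what is being proved. Replace the extremal-vertex machinery by the pointwise identity above and the proof closes.
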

 \begin{proof}
The equation for critical points
$$
\frac{\partial S_{hyp}(\gamma)}{\partial\gamma_j}=\sum_{\text{neighbors}\ k} \left(g(\gamma_j-\gamma_k)+g(\gamma_j+\gamma_k)\right) +\Phi_j =0 
$$
coincides with (\ref{eq:angle_internal_gamma}, \ref{eq:angle_boundary_gamma}).

It is left to show that if $\gamma$ is a critical point of $S_{hyp}$ then all $\gamma_j$ lie in the interval $[0,2K]$. 
The function $g(x)$ is monotonically increasing. Therefore for $\gamma_j>2K$ using the symmetries (\ref{eq:g(x)_periodicity}, \ref{eq:g(x)_odd}) we obtain
$$
g(\gamma_j -\gamma_k)+g(\gamma_j + \gamma_k)>g(2K-\gamma_k)+g(2K+\gamma_k)=-g(-2K+\gamma_k)+g(2K+\gamma_k)=\pi.
$$
Thus we have
$$
\frac{\partial S_{hyp}(\gamma)}{\partial\gamma_j} > V(j)\pi +\Phi_j \ge 0
$$
for $\Phi_j\ge -V(j)\pi$. The last condition is automatically satisfied for inner vertices as well as for boundary vertices with negatively oriented rings. For positively oriented boundary vertices the condition $\Theta(j) \le V(j)\pi$ is satisfied for all vertices with $V(j) \ge 2$. For the boundary vertices of valence 1 we obtain the condition $\Theta(j)\le \pi$.

For $\gamma_j<0$ we obtain
$$
g(\gamma_j -\gamma_k)+g(\gamma_j + \gamma_k)< g(-\gamma_k)+g(\gamma_k)=0.
$$
Thus  we have
$$
\frac{\partial S_{hyp}(\gamma)}{\partial\gamma_j} <\Phi_j \le 0.
$$
The last inequality  $\Phi_j\le 0$ is satisfied for inner vertices as well as for boundary vertices with positively oriented rings. For negatively oriented boundary vertices the condition $-V(j)\pi-\Theta(j) \le 0$ is satisfied for all vertices with $V(j) \ge 2$. For the boundary vertices of valence 1 we obtain the condition $|\Theta(j)|\le \pi$.
\end{proof}

Next theorem follows from the fact that the derivative $g'(x)$ is always positive, see (\ref{eq:g'_1}) and (\ref{eq:g'_2}). 
\begin{theorem}
\label{thm:convex_hyperbolic}
The functional $S_{hyp}$ is convex, its second derivative  is the quadratic form
 \begin{eqnarray}
 \label{eq:hessian_hyperbolic}
 D^2 S_{hyp}= &\frac{1}{2}\sum_{(j,k)} (\dn(\gamma_j-\gamma_k)+q\cn(\gamma_j-\gamma_k))(d\gamma_j-d\gamma_k)^2+\\
&(\dn(\gamma_j+\gamma_k)+q\cn(\gamma_j+\gamma_k))(d\gamma_j+d\gamma_k)^2,\nonumber
 \end{eqnarray}
 where the sum is taken over pairs of neighboring rings.
 \end{theorem}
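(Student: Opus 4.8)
The plan is to verify formula (\ref{eq:hessian_hyperbolic}) by a direct double differentiation and then to read off convexity from the positivity of $F''$ already recorded in the excerpt.

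First I would note that the linear term $\sum_j\Phi_j\gamma_j$ in $S_{hyp}$ has vanishing second differential, so $D^2 S_{hyp}$ comes entirely from the edge sum $\sum_{(jk)}\bigl(F(\gamma_j-\gamma_k)+F(\gamma_j+\gamma_k)\bigr)$. Fix an edge $(jk)$. Since $\mathrm{d}(\gamma_j-\gamma_k)=d\gamma_j-d\gamma_k$ and $\mathrm{d}(\gamma_j+\gamma_k)=d\gamma_j+d\gamma_k$, the chain rule gives that the Hessian of $F(\gamma_j-\gamma_k)$ is $F''(\gamma_j-\gamma_k)(d\gamma_j-d\gamma_k)^2$ and that of $F(\gamma_j+\gamma_k)$ is $F''(\gamma_j+\gamma_k)(d\gamma_j+d\gamma_k)^2$. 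Summing over all edges and substituting $F''(x)=\tfrac12(\dn x+q\cn x)$ from (\ref{eq:F''}) reproduces (\ref{eq:hessian_hyperbolic}) verbatim.

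For the convexity claim I would use that $F''(x)=g'(x)$ is strictly positive for every real $x$. This follows from (\ref{eq:g'_1}), which writes $g'(x)=\tfrac{1+q}{2}\left(\tfrac{1-q\sn^2\frac{x}{2}}{1+q\sn^2\frac{x}{2}}\right)$, a ratio of positive numbers whenever $q\le 1$ (both numerator and denominator lie in the interval $(0,2]$ since $|\sn|\le 1$). Consequently every summand of (\ref{eq:hessian_hyperbolic}) is a nonnegative multiple of a perfect square, so $D^2 S_{hyp}$ is positive semidefinite as a quadratic form and $S_{hyp}$ is convex. One in fact gets strict convexity on a connected $G$: if $D^2 S_{hyp}(d\gamma)=0$ then $d\gamma_j-d\gamma_k=0$ and $d\gamma_j+d\gamma_k=0$ for every edge, forcing $d\gamma\equiv 0$.

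I do not anticipate a real obstacle; the one point needing care is that the Jacobi functions $\cn$ and $\dn$ of modulus $q$ individually take negative values, so a naive termwise sign estimate of $\dn x+q\cn x$ is not available. The clean route is to rely on the identity $\dn x+q\cn x=2g'(x)$ (equations (\ref{eq:g'_2}) and (\ref{eq:F''})) together with the manifestly positive closed form (\ref{eq:g'_1}), after which the statement follows in a couple of lines.
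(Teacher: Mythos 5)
Your proposal is correct and follows essentially the same route as the paper: the paper likewise obtains the Hessian by termwise second differentiation of the edge sum and derives convexity from the positivity of $g'(x)=F''(x)$ via the closed form (\ref{eq:g'_1}). Your added remark on strict convexity for connected $G$ is a harmless (and valid) strengthening beyond what the paper states.
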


The convexity of $S_{hyp}$ allows to construct hyperbolic ring patterns by simple minimization of the functional and to prove their existence and uniqueness, see Section~\ref{sec:computation}.

In the case $q=1$  the functional $S_{hyp}$ up to a constant becomes the functional for hyperbolic orthogonal circle patterns, introduced in \cite{BSp04}: 
$$
S_{hyp}=\sum_{(jk)} \left( {\mathcal F}(\gamma_j-\gamma_k)+{\mathcal F}(\gamma_k-\gamma_j)+
{\mathcal F}(\gamma_j+\gamma_k)+ {\mathcal F}(-\gamma_j-\gamma_k)\right) +\sum_j\Phi_j\gamma_j, 
$$
where $\mathcal F$ is given by (\ref{eq:functional_dilogarithm}).

The second derivative of this convex functional is
\begin{eqnarray*}
 D^2 S_{hyp}= \sum_{(j,k)} \frac{1}{\cosh(\gamma_j-\gamma_k)}(d\gamma_j-d\gamma_k)^2+
 \frac{1}{\cosh(\gamma_j+\gamma_k)}(d\gamma_j+d\gamma_k)^2.
 \end{eqnarray*} 

\section{Boundary value problems} 
\label{sec:computation}
Let us construct hyperbolic orthogonal ring patterns satisfying classical boundary conditions. 

\begin{theorem} 
\label{thm:BVP_hyperbolic}
Hyperbolic orthogonal ring patterns can be obtained as solutions of the following boundary valued problems:
\begin{itemize}
\item (Dirichlet boundary conditions)
For any choice of prescribed radii $\gamma:V_B\to [0,2K]$ of boundary rings there exists a unique hyperbolic orthogonal 
ring pattern $\mathcal R$ with these boundary radii.
\item (Neumann boundary conditions) 
Let $\Theta:V_B\to (-2\pi, 2\pi)$ be prescribed boundary cone angles such that $|\Theta(j)|<\pi$ for the boundary vertices of valence 1.
Then  there exists a unique hyperbolic orthogonal ring pattern $\mathcal R$ with these boundary cone angles.  
\end{itemize}
\end{theorem}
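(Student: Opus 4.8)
The plan is to derive both parts from the variational principle of Theorem~\ref{thm:variational_hyperbolic} together with the convexity of Theorem~\ref{thm:convex_hyperbolic}, realizing each boundary value problem as a minimization of $S_{hyp}$. I would first upgrade Theorem~\ref{thm:convex_hyperbolic} to \emph{strict} convexity. Writing $\tfrac12(\dn x+q\cn x)=g'(x)$, which is strictly positive for every finite $x$ by~\eqref{eq:g'_1}, each edge $(j,k)$ contributes $g'(\gamma_j-\gamma_k)(d\gamma_j-d\gamma_k)^2+g'(\gamma_j+\gamma_k)(d\gamma_j+d\gamma_k)^2$ to $D^2 S_{hyp}$, and this expression vanishes on a tangent vector only if $d\gamma_j=d\gamma_k=0$. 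Since every vertex of $G$ is incident to at least one edge, $D^2 S_{hyp}$ is positive definite, both on the whole configuration space and on the subspace spanned by the interior variables. A strictly convex function has at most one critical point, which already yields the uniqueness statements in both cases.

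For the Dirichlet problem, fix boundary radii $\gamma_j\in[0,2K]$ for $j\in V_B$ and minimize $S_{hyp}$ (with $\Phi_j=-2\pi$ at the interior vertices) over the compact cube $\{0\le\gamma_j\le 2K \text{ for all interior } j\}$; a minimizer $\gamma^\ast$ exists by continuity and compactness. I claim it lies in the open cube. If $\gamma^\ast_j=0$ for some interior $j$, then $\partial S_{hyp}/\partial\gamma_j(\gamma^\ast)=\sum_{k\sim j}(g(-\gamma^\ast_k)+g(\gamma^\ast_k))+\Phi_j=-2\pi<0$ by the oddness~\eqref{eq:g(x)_odd} of $g$, so $S_{hyp}$ decreases as $\gamma_j$ increases, contradicting minimality; if $\gamma^\ast_j=2K$, then, using the identity $g(2K-x)+g(2K+x)=\pi$ (a consequence of~\eqref{eq:g(x)_periodicity} and~\eqref{eq:g(x)_odd}), one gets $\partial S_{hyp}/\partial\gamma_j(\gamma^\ast)=\pi V(j)+\Phi_j=4\pi-2\pi>0$, again a contradiction. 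Hence $\gamma^\ast$ is an unconstrained critical point of $S_{hyp}$, and by Theorem~\ref{thm:variational_hyperbolic} (equivalently Theorem~\ref{thm:gamma-equations}) it determines a hyperbolic orthogonal ring pattern with the prescribed boundary radii.

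For the Neumann problem all $\gamma_j$ are free. Given $\Theta\colon V_B\to(-2\pi,2\pi)$, put $\Phi_j=-2\pi$ at the interior vertices and, at a boundary vertex, $\Phi_j=-\Theta(j)$ when $\Theta(j)\ge 0$ and $\Phi_j=-\pi V(j)-\Theta(j)$ when $\Theta(j)\le 0$ as in~\eqref{eq:Phi_j_hyp}, and minimize $S_{hyp}$ over the cube $\{0\le\gamma_j\le 2K \text{ for all } j\}$. The same face computation gives $\partial S_{hyp}/\partial\gamma_j=\Phi_j$ on $\{\gamma_j=0\}$ and $\partial S_{hyp}/\partial\gamma_j=\pi V(j)+\Phi_j$ on $\{\gamma_j=2K\}$, and the hypotheses on $\Theta$ are precisely what force $\Phi_j<0$ and $\pi V(j)+\Phi_j>0$ at every vertex: for interior vertices and boundary vertices of valence $\ge 2$ this follows from $V(j)\ge 2$ and $|\Theta(j)|<2\pi$, whereas valence-$1$ vertices require the sharper bound $|\Theta(j)|<\pi$ --- the same bookkeeping as in the proof of Theorem~\ref{thm:variational_hyperbolic}. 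Thus the minimizer is an interior critical point and, by Theorem~\ref{thm:variational_hyperbolic}, a hyperbolic orthogonal ring pattern; strict convexity gives uniqueness.

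The main obstacle is the orientation bookkeeping in the Neumann case: the sign of $\Phi_j$ one must prescribe depends on the orientation of the $j$-th boundary ring, which is part of the unknown, so one has to check that the orientation realized by $\gamma^\ast_j$ --- positive for $\gamma^\ast_j\in[0,K]$, negative for $\gamma^\ast_j\in[K,2K]$ --- is consistent with the sign of $\Theta(j)$ used to define $\Phi_j$, so that the geometric cone angle $\sum_k\theta_k$ of Lemma~\ref{lem:angle_hyperbolic} at $j$ really equals $\Theta(j)$ and not $\Theta(j)\mp\pi V(j)$. I would settle this using that $\gamma_j\mapsto\sum_{k\sim j}(g(\gamma_j-\gamma_k)+g(\gamma_j+\gamma_k))$ is monotone increasing from the value $0$ at $\gamma_j=0$ to the value $\pi V(j)$ at $\gamma_j=2K$; the borderline case $\Theta(j)=0$ forces a degenerate boundary ring with $r_j=\pm\infty$ and should be read as a limit. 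Note, finally, that this approach is genuinely special to the hyperbolic setting: the spherical functional~\eqref{eq:functional_spherical} is not convex, which is exactly why the spherical existence statement (Theorem~\ref{thm:circle-ring_deformation}) needs a separate min-max argument rather than plain minimization.
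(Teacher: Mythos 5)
Your proof is correct, but your existence argument takes a genuinely different route from the paper's. The paper minimizes $S_{hyp}$ unconstrained over all of $\R^N$ and secures a minimizer by a coercivity estimate: from convexity and evenness of $F$ it derives $S_{hyp}(\gamma)>\sum_j\bigl(V(j)F(\gamma_j)+\Phi_j\gamma_j\bigr)$ and then proves the quadratic lower bound $F(x)>\pi x^2/(8K)$ by comparing $g$ with the linear function $\tilde g(x)=\pi x/(4K)$, so that $S_{hyp}\to+\infty$ as $|\gamma|_{\max}\to\infty$; the localization of the resulting critical point to $[0,2K]^N$ is then inherited from Theorem~\ref{thm:variational_hyperbolic}. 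You instead minimize over the compact cube $[0,2K]^N$ and exclude boundary minimizers by evaluating $\partial S_{hyp}/\partial\gamma_j$ on the faces $\gamma_j=0$ and $\gamma_j=2K$, where it equals exactly $\Phi_j<0$ and $\pi V(j)+\Phi_j>0$ --- the same sign inequalities the paper uses to localize critical points, repackaged as a barrier argument. This buys a proof that needs no growth estimate at all, at the price of not recording the (independently useful) coercivity of the functional; both are valid. You are also more careful than the paper on two points: you upgrade Theorem~\ref{thm:convex_hyperbolic} to strict convexity (each edge term of \eqref{eq:hessian_hyperbolic} vanishes only if both endpoint differentials vanish, and every vertex meets an edge), which is what uniqueness actually requires; and you flag the Neumann orientation bookkeeping --- the sign convention for $\Phi_j$ in \eqref{eq:Phi_j_hyp} presupposes the orientation of the boundary ring, which is only determined a posteriori by whether $\gamma_j^\ast$ lands in $[0,K]$ or $[K,2K]$ --- a consistency check the paper passes over in silence.
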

\begin{proof}
For the Dirichlet boundary value problem one minimizes the functional $S_{hyp}(\gamma)$ given by (\ref{eq:functional_hyperbolic}) with $\Phi_i=-2\pi$ for all internal vertices by varying $\gamma$ on internal vertices.
For the Neumann boundary value problem one minimizes the functional $S_{hyp}(\gamma)$ with the cone angles given by (\ref{eq:Phi_j_hyp}) by varying $\gamma$ on all vertices.
The uniqueness of the solution follows from the convexity of the functional  $S_{hyp}(\gamma)$, see Theorems~\ref{thm:variational_hyperbolic} and \ref{thm:convex_hyperbolic}.

To prove the existence one should show additionally that  
\begin{equation}
\label{eq:infinite_S}
S_{hyp}(\gamma)\to +\infty, \quad \text{for}\  |\gamma |_{\max}\to \infty,
\end{equation}
where $|\gamma |_{\max}=\max_j \{ |\gamma_j  |\}$. Then the minimum of $S_{hyp}$ is reached at finite $\gamma_j$'s.

Since $F(x)$ is convex an even, we have
$$
F(\gamma_j - \gamma_k)+F(\gamma_j + \gamma_k)> 2F(\gamma_j) \ \text{and}\ >2F(\gamma_k),
$$
thus
$$
F(\gamma_j - \gamma_k)+F(\gamma_j + \gamma_k)> F(\gamma_j)+F(\gamma_k).
$$
For the functional this implies
\begin{equation}
\label{eq:S>VF+linear}
S_{hyp}(\gamma)>\sum_j (V(j)F(\gamma_j)+\Phi_j \gamma_j).
\end{equation}
The function $g(x)$  can be well approximated by the linear function 
$$
\tilde{g}(x)=\frac{\pi x}{4K}, \ g(2nK)=\tilde{g}(2nK), \forall n\in {\mathbb Z}.
$$ 
Moreover since $g(x)$ is concave on $x\in [0, 2K]$ and 
$$
g(2K+x)-\frac{\pi}{2}=\frac{\pi}{2}-g(2K-x),
$$ 
we have for $x>0$,
$$
F(x)=\int_0^x g(u)du>\int_0^x \tilde{g}(u)du= \frac{\pi x^2}{8K}.
$$
Thus the first term in (\ref{eq:S>VF+linear}) grows at least quadratically in $|\gamma_j |$, which implies (\ref{eq:infinite_S}). 
\end{proof} 
 
The variational description on Theorem~\ref{thm:BVP_hyperbolic} gives an effective tool for computation of the corresponding ring patterns. The solution can be obtained by direct minimization of the functional. An example of solution of a Neumann boundary value problem is presented in Fig.~\ref{fig:variational_hyperbolic}. All images in this Section present ring patterns computed by Nina Smeenk using the described variational methods.

\begin{figure}[htbp]
  \centering
\includegraphics[width=.7\linewidth]{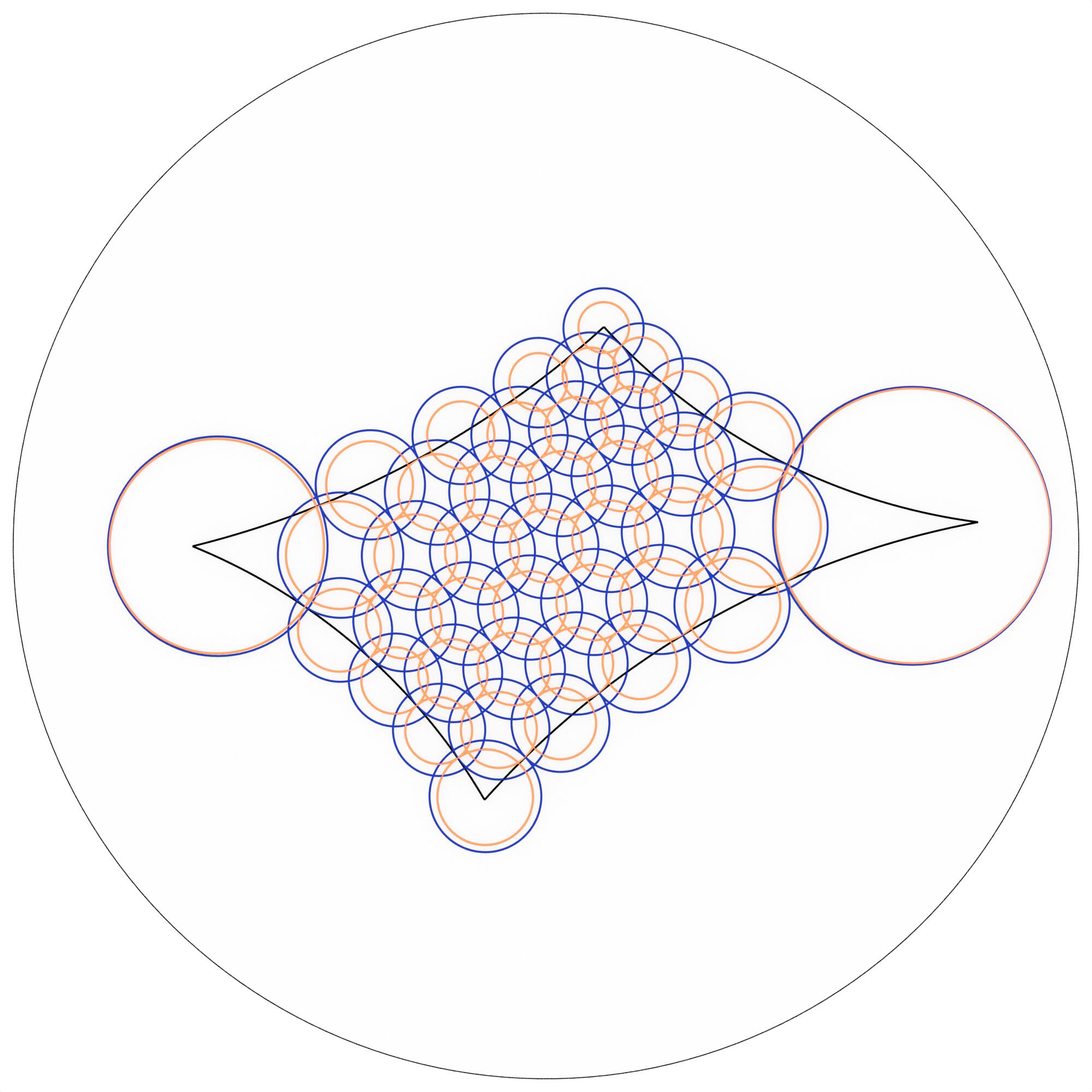}
  \caption{A hyperbolic orthogonal ring pattern solving the Neumann boundary value problem with a quadrilateral boundary,
   and $q=0.99$. The prescribed angles are
    $\pi$ for the boundary vertices of degree $2$. The shape is governed by the 
    four angles: $\Theta_1=\frac{\pi}{2}, \Theta_2=\frac{\pi}{10}, \Theta_3=\frac{2\pi}{5}, \Theta_4=\frac{\pi}{5}$, 
   with the sum $<2\pi$, prescribed for the
  four corner boundary vertices of degree 1.}%
   \label{fig:variational_hyperbolic} 
\end{figure}

Spherical orthogonal ring patterns also can be constructed using a variational principle. The method is similar to the one used in \cite{BHoSp06} for construction of orthogonal circle patterns in a sphere.

The functional (\ref{eq:functional_spherical}) is not convex. The quadratic form (\ref{eq:hessian_spherical})
is negative for the tangent vector $v=\sum_j \partial/\partial\beta_j$, the
index is therefore at least $1$. Define a reduced functional
$\widetilde{S}_{sph}(\beta)$ by maximizing in the direction $v$:
\begin{equation*}
  \widetilde{S}_{sph}(\beta)=\max_t S_{sph}(\beta+t v).
\end{equation*}
Obviously, $\widetilde{S}_{sph}(\beta)$ is invariant under translations in the
direction $v$. Now the idea is to minimize $\widetilde{S}_{sph}(\beta)$ restricted
to $\sum_j\beta_j=\rm{const}$. This method has proved to be amazingly
powerful. In particular, it can be used to produce branched orthogonal ring patterns
in the sphere.

\begin{figure}[htbp]
  \centering
\includegraphics[width=.3\linewidth]{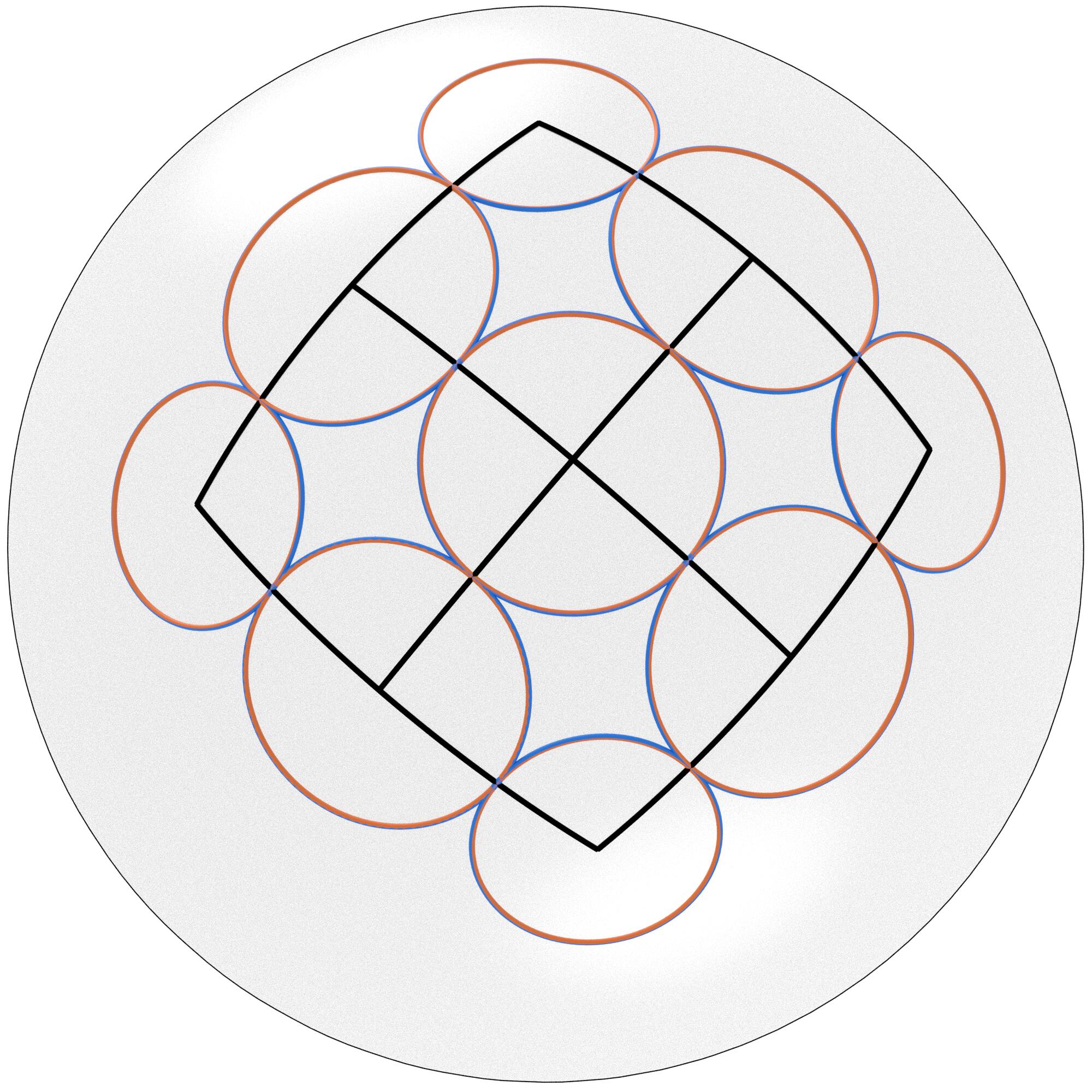}
\includegraphics[width=.3\linewidth]{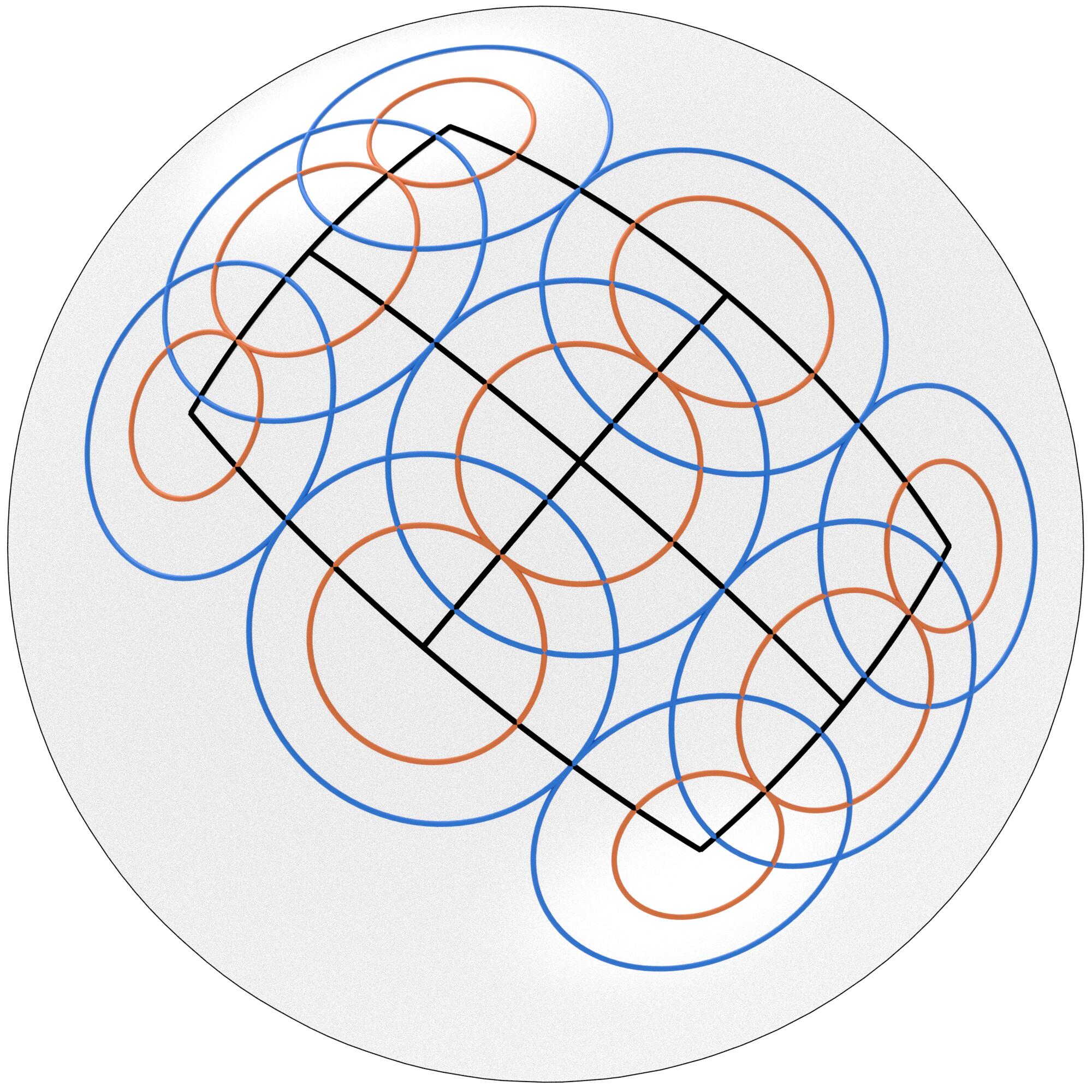}
\includegraphics[width=.3\linewidth]{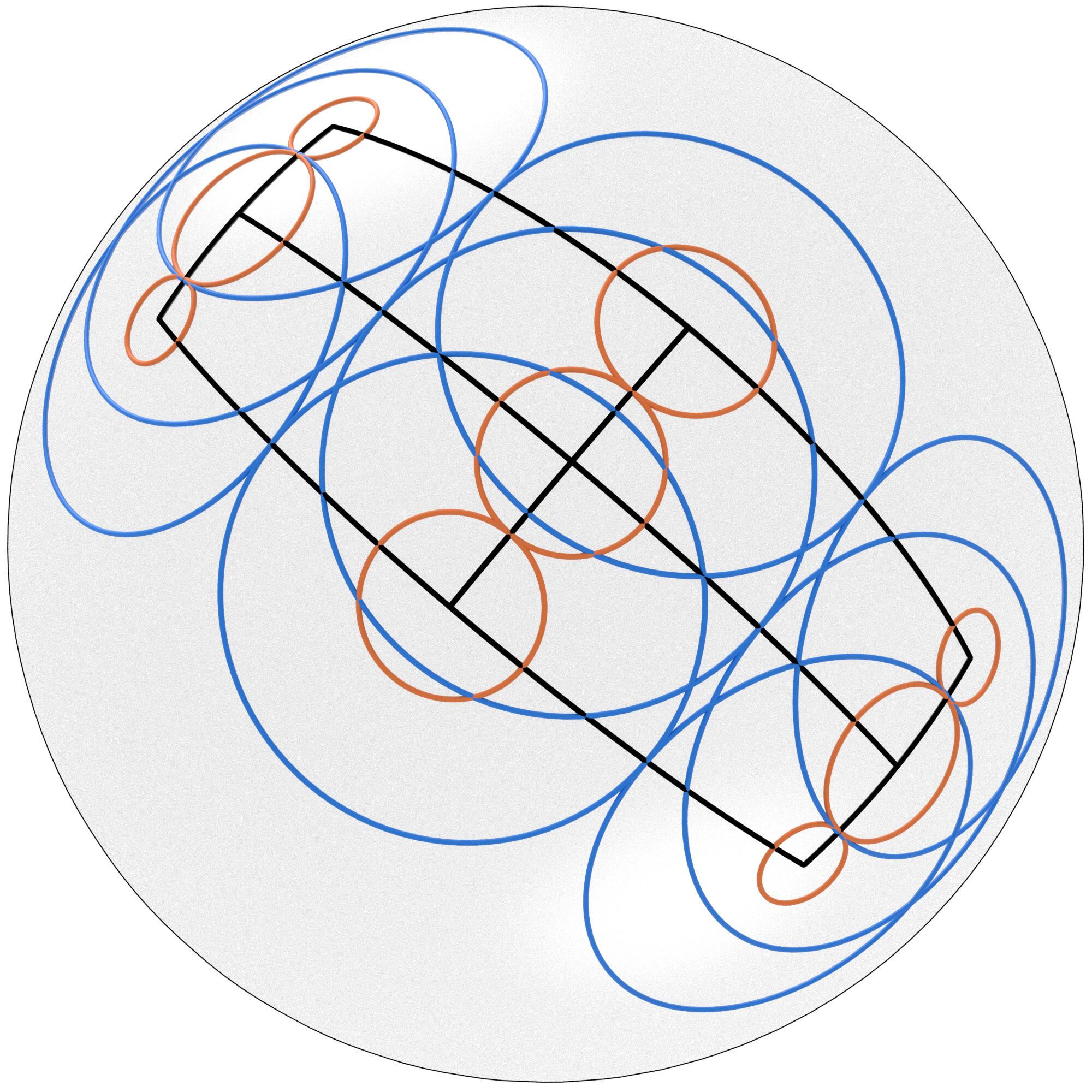}
  \caption{Spherical orthogonal ring patterns solving the Neumann boundary value problem with a quadrilateral boundary, and with $q=0.9999$ (left), $q=0.98$ (center) and $q=9.95$ (right) computed using the variational
    principle. The prescribed angles are
    $\pi$ for the boundary vertices of degree $2$. The shape is governed by the 
    four angles: $\Theta_1=\Theta_2=\Theta_3=\Theta_4=.55\pi$, with the sum $>2\pi$, prescribed for the
  four corner boundary vertices of degree 1.  For more clarity only half of the rings is shown.}%
   \label{fig:variational_spherical} 
\end{figure}

Three examples of solution of a Neumann boundary value problem with different parameters $q$ are presented in Fig.~\ref{fig:variational_spherical}). 
We minimize the functional  $\widetilde{S}_{sph}(\beta)$ for $q<1$ preserving the sum $\sum_j\beta_j=\rm{const}$. Numerical experiments show that the ring patterns exist for $q$ sufficiently close to 1. This fact is in complete agreement with the theory we present in Section~\ref{sec:deformations}.

Another example of an orthogonal ring pattern is presented in Fig.~\ref{fig:gauss_cmc}.  It corresponds to the Gauss map of a periodic discrete cmc surface, see \cite{BHoSm24}, which is a discrete analogue of the doubly periodic cmc surface $\psi_{0,\frac{\pi}{2}}(U_{2,2})$ by Lawson, see \cite[Theorem 9]{La70}. 

\begin{figure}[htbp]
  \centering
\includegraphics[width=.45\linewidth]{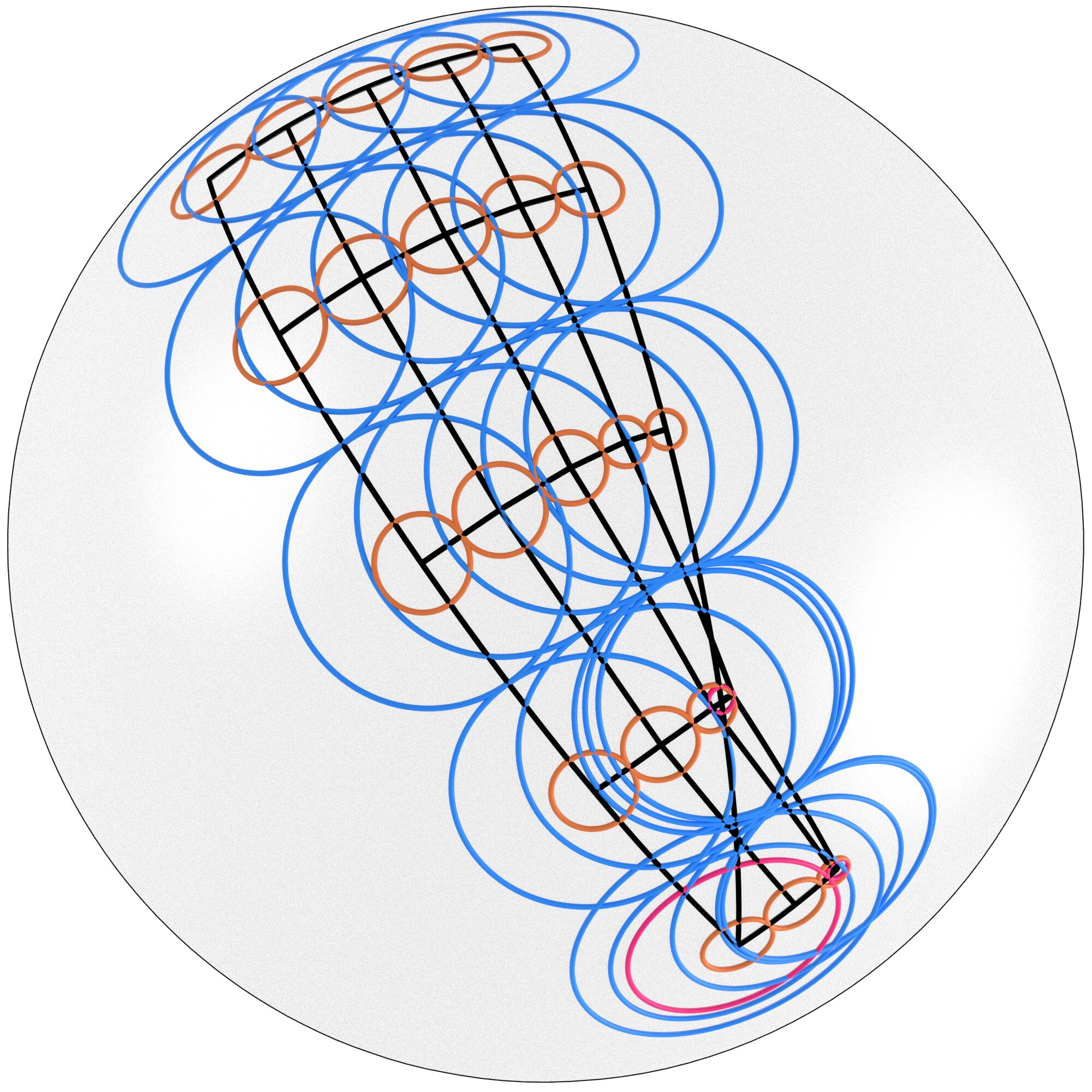}
\caption{An orthogonal ring pattern corresponding to the Gauss map of a discrete periodic cmc net in \cite{BHoSm24}, $q = 0.98288979$. The angles at the corner vertices are: $\Theta_1=\frac{2\pi}{3}, \Theta_2=\Theta_3=\Theta_4=\frac{\pi}{2}$. One half of the rings are shown. Red inner circles are negatively oriented.}%
   \label{fig:gauss_cmc} 
\end{figure}

\section{Ring patterns as deformations of circle patterns}
\label{sec:deformations}

Existence and uniqueness problems for spherical ring patterns are more difficult to investigate then in the hyperbolic case, since the corresponding functional $S_{sph}$ is not convex. However the variational description can be used to prove the existence of ring patterns that are close to circle patterns. Note that circle patterns in the sphere can be constructed by the stereographic projection of circles patterns in the plane. For the latter there exists a developed theory, including existence and uniqueness results, based on a semi-convex variational principle \cite{BSp04}.

Let us consider the Dirichlet or Neumann boundary value problems for orthogonal circle and ring patterns, analogous to the ones formulated in Theorem \ref{thm:BVP_hyperbolic} in the hyperbolic case. For both boundary value problems one has $N$ governing equations
$$
f_i(\beta)=0, \ \text{where} \ f_i(\beta):=\frac{\partial S_{sph}}{\partial \beta_i} 
$$
for $N$ variables $\beta_i$, where $N$ is the number of rings in the Neumann case and the number of inner rings in the Dirichlet case.

Consider an orthogonal spherical circle pattern which solves such a boundary value problem. It is parametrized by spherical radii $R^0_i$ of the circles or, equivalently, by the variables $\beta^0_i=\log \cot \frac{R^0_i}{2}$, see \eqref{eq:q=1_spherical} that satisfy the governing equations $f_i(\beta^0)=0$. Note that the functional \eqref{eq:functional_spherical_circles} for circle patterns is a special case of the functional $S_{sph}$ for ring patterns corresponding to $q=1$.

One says that a circle pattern allows an infinitesimal deformation if there exist $\phi_i$'s such that for the one-parameter family deformation 
$$
\beta_i^\epsilon:=\beta_i^0+\epsilon \phi_i
$$
the governing equations are satisfied up to the terms of second order $f_i(\beta^0+\epsilon \phi)=o(\epsilon)$, or, equivalently,
$$
0=\frac{\partial f_i(\beta^0 +\epsilon \phi)}{\partial \epsilon}_{| \epsilon=0} =\sum_j \frac{\partial f_i}{\partial\beta_j}(\beta^0) \phi_j=\sum_j \frac{\partial^2 S_{sph}}{\partial\beta_i\partial\beta_j}(\beta^0) \phi_j.
$$
This condition can be obviously reformulated as 
$$
\det \frac{\partial^2 S_{sph}}{\partial\beta_i \partial\beta_j}(\beta^0)=0.
$$
Circle patterns that do not allow infinitesimal deformations are called {\em rigid}. We have
\begin{proposition}
A circle pattern solution of a Dirichlet or Neumann boundary value problem with variables $\beta^0$ is rigid if and only if
\begin{equation}
\label{eq:Hess_nondeg} 
\det \frac{\partial^2 S_{sph}}{\partial\beta_i \partial\beta_j}(\beta^0)\neq 0.
\end{equation}
\end{proposition}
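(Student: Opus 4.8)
The statement to prove is essentially a tautology once the definitions are unwound, so the ``proof'' is mostly a matter of carefully tracing through the equivalences already set up in the surrounding text.

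First I would recall the chain of definitions just established. A circle pattern solution $\beta^0$ of the boundary value problem satisfies $f_i(\beta^0)=0$ for $i=1,\dots,N$, where $f_i(\beta)=\partial S_{sph}/\partial\beta_i$. By definition, an infinitesimal deformation is a nonzero vector $(\phi_i)$ such that $f_i(\beta^0+\epsilon\phi)=o(\epsilon)$ for all $i$; differentiating at $\epsilon=0$ this is exactly the statement that $(\phi_i)$ lies in the kernel of the Jacobian matrix $\big(\partial f_i/\partial\beta_j(\beta^0)\big)_{i,j}=\big(\partial^2 S_{sph}/\partial\beta_i\partial\beta_j(\beta^0)\big)_{i,j}$, which is the Hessian of $S_{sph}$ restricted to the relevant variables. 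Hence: a nontrivial $(\phi_i)$ exists if and only if this Hessian is singular.

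The proof is then just the logical negation: the pattern is rigid (no nontrivial infinitesimal deformation) precisely when the kernel of the Hessian is trivial, i.e. when $\det\big(\partial^2 S_{sph}/\partial\beta_i\partial\beta_j(\beta^0)\big)\neq 0$. This is exactly \eqref{eq:Hess_nondeg}. One should note that the Hessian in question is symmetric (second partials of a smooth function commute, $S_{sph}$ being $C^2$ since $F\in C^2$ with $F''$ given by \eqref{eq:F''}), so singularity of the matrix is equivalent to existence of a nonzero kernel vector over $\mathbb{R}$; there is no subtlety about real versus complex eigenvalues. The only care needed is bookkeeping of \emph{which} variables are free: in the Neumann case all $N$ ring-variables vary, in the Dirichlet case the $N$ inner-ring variables vary while boundary $\beta$'s are held fixed, and in each case $f_i$ ranges over the same index set, so the Jacobian is a genuine square $N\times N$ matrix and $\det$ makes sense.

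There is essentially no obstacle here — the statement is a restatement of the preceding two displayed equations. If anything, the ``hard part'' is purely expository: making sure the reader understands that ``allows an infinitesimal deformation'' has been defined so that it is literally equivalent to the vanishing of the determinant, so that the proposition contains no new mathematical content beyond a change of phrasing (rigidity = nondegeneracy of the Hessian). I would therefore write the proof in two or three sentences, citing the displayed computation of $\partial f_i(\beta^0+\epsilon\phi)/\partial\epsilon|_{\epsilon=0}$ above and invoking only the elementary linear-algebra fact that a square matrix has nontrivial kernel if and only if its determinant vanishes.
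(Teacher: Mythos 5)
Your proof is correct and follows exactly the paper's route: the paper derives the linearized equation $\sum_j \frac{\partial^2 S_{sph}}{\partial\beta_i\partial\beta_j}(\beta^0)\phi_j=0$ in the paragraph preceding the proposition and treats the determinant reformulation as immediate, so the proposition is indeed just the contrapositive restatement you describe. Your added care about the nontriviality of $\phi$ and about which variables are free in the Dirichlet versus Neumann case is consistent with the paper's setup and adds nothing contradictory.
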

Generic circle patterns are rigid.

\begin{theorem}
\label{thm:circle-ring_deformation}
For any rigid orthogonal circle pattern solution of a Dirichlet or Neumann boundary value problem and sufficiently small $\epsilon$ there exists a ring pattern solving the same boundary value problem, parametrized by $q=1-\epsilon$ with the circle pattern corresponding to $q=1$. 
\end{theorem}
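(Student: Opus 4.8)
The plan is to apply the implicit function theorem to the system $f_i(\beta,q)=\partial S_{sph}/\partial\beta_i=0$, viewing $q$ as a parameter near $q=1$. First I would make the $q$-dependence of the functional explicit: both the edge terms $F(\beta_j\pm\beta_k)$ and hence $g=F'$ depend smoothly on $q$ (this is visible from the formulas \eqref{eq:g'_2}, \eqref{eq:F''}, where $\dn$ and $\cn$ are analytic in the modulus for $q$ in a neighborhood of $1$; the boundary coefficients $\Phi_j$ in \eqref{eq:Phi_j_sph} do not depend on $q$ at all). So the map $F:(\beta,q)\mapsto(f_1(\beta,q),\dots,f_N(\beta,q))\in\R^N$ is $C^1$ (indeed real-analytic) on a neighborhood of $(\beta^0,1)$ in $\R^N\times\R$, and by hypothesis $F(\beta^0,1)=0$.

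The key step is that the partial Jacobian $\partial F/\partial\beta$ at $(\beta^0,1)$ is exactly the Hessian $\bigl(\partial^2 S_{sph}/\partial\beta_i\partial\beta_j\bigr)(\beta^0)$ evaluated at $q=1$, which is nonsingular precisely because the circle pattern is assumed rigid (the Proposition just above). Hence the implicit function theorem yields a $C^1$ curve $q\mapsto\beta(q)$, defined for $q$ in some interval $(1-\epsilon_0,1+\epsilon_0)$, with $\beta(1)=\beta^0$ and $f_i(\beta(q),q)=0$ for all $i$. Writing $q=1-\epsilon$ gives the asserted family.

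It remains to check that for $\epsilon$ small the solution $\beta(q)$ really parametrizes a bona fide spherical orthogonal ring pattern, i.e. that Theorem~\ref{thm:beta-equations} / Theorem~\ref{thm:variational_spherical} applies. This requires the open conditions $\beta_j(q)\in(0,2K)$ (equivalently $R_j<\pi/2$, all rings strictly inside a hemisphere). But $\beta^0\in\R^N$ and the circle-pattern radii $R^0_j=2\arctan e^{-\beta^0_j}$ are finite, so $\beta^0$ lies in the open cube $(0,2K)^N$ in the limit $q\to1$ only in the sense that $K\to\infty$; more carefully, one fixes $q_*<1$ close to $1$, notes $K=K(q_*)$ is finite but large, and since $\beta^0$ is a fixed finite vector and $\beta(q)\to\beta^0$ uniformly, for $\epsilon$ small enough $\beta_j(q)\in(0,2K(q))$ holds for every $j$ (and the boundary data, Dirichlet radii or Neumann angles, are preserved by construction because the corresponding equations are among the $f_i=0$ or are held fixed). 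Hence Theorem~\ref{thm:variational_spherical} certifies that $\beta(q)$ is a spherical ring pattern solving the same boundary value problem.

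The main obstacle is this last bookkeeping point: because $K=K(q)$ blows up as $q\to1$, one must be slightly careful that ``$\beta_j\in(0,2K)$'' is a genuinely open neighborhood of $\beta^0$ for each fixed $q<1$ and that the smallness of $\epsilon$ needed for $\beta(q)$ to stay inside it is compatible with the smallness needed for the implicit function theorem; both are satisfied because everything in sight is continuous in $q$ on a closed interval bounded away from degeneracy, but it should be stated explicitly. The rest — smoothness of the $q$-dependence, the Jacobian-equals-Hessian identification, and the invocation of rigidity — is routine.
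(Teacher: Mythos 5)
Your proposal is correct and follows essentially the same route as the paper: set up $f_i(\beta,q)=\partial S_{sph}/\partial\beta_i$, use rigidity to invert the Hessian $\partial f_i/\partial\beta_j$ at $(\beta^0,q=1)$, and apply the implicit function theorem to obtain $\beta(q)$ for $q$ near $1$. Your additional bookkeeping about $\beta_j(q)$ remaining in $(0,2K(q))$ as $K(q)\to\infty$ is a reasonable refinement that the paper leaves implicit, but it does not change the argument.
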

\begin{proof}
For small $\epsilon$ we should show the existence of functions $\beta_i(\epsilon)$ solving the system of equations
\begin{equation}
\label{eq:epsilon-family}
f_i(\beta, q):=\frac{\partial S_{sph}(\beta(\epsilon), q=1-\epsilon)}{\partial \beta_i}=0,
\end{equation}
for all rings (labeled by $i=1,\ldots,N$). 
This is a system of $N$ equations $f_i(\beta, q)=0$ for $N$ functions $\beta_j(\epsilon)$, satisfied at $\epsilon=0$ by the circle pattern radii $\beta(\epsilon=0)=\beta^0$.  
Since the circle pattern is rigid the matrix $\frac{\partial f_i}{\partial \beta_j}$ can be inverted at $\epsilon=0$, and by implicit function theorem there exists a solution for small $\epsilon$.  
Further, differentiation of \eqref{eq:epsilon-family} gives
$$
\sum_j \frac{\partial f_i}{\partial \beta_j}\frac{d\beta_j}{d\epsilon}-\frac{\partial f_i}{\partial q}=0,
$$
and for $\beta_i(\epsilon)$ we obtain a system of ordinary differential equations 
$$
\frac {d\beta_i}{d\epsilon}=\sum_j \left[ \frac{\partial^2 S_{sph}}{\partial\beta^2}\right]^{-1}_{i,j} \frac{\partial^2 S_{sph}(\beta, q)}{\partial q \partial \beta_j}, \quad q=1-\epsilon,
$$
with the initial condition $\beta(\epsilon=0)=\beta^0$.   
\end{proof}
This flow can be used for numerical computation of the ring patterns.

\section{Orthogonal ring patterns equation as a Laplace equation for Q4 integrable equation}
\label{sec:integrable}
 In this section we show that the basic equations (\ref{eq:spherical}, \ref{eq:hyperbolic}) for orthogonal ring patterns in the sphere and hyperbolic plane are integrable. They turned out to be a special case of the Q4 equation.

Quad equations on the square grid $x:\Z^2\to \C$ relate fields at the vertices of elementary squares of the grid. Integrable, i.e. multi-dimensionally  consistent, quad equations were classified in \cite{AdBSu03}, see also \cite{AdBSu09}. These equations are of the form
$$
Q(x_1,x_2,x_3,x_4; \alpha,\beta)=0,
$$
where $x_i$ are the fields at the vertices of a square and $\alpha, \beta$ are parameters associated to its edges, see Fig.\ref{fig:quad-equation}.

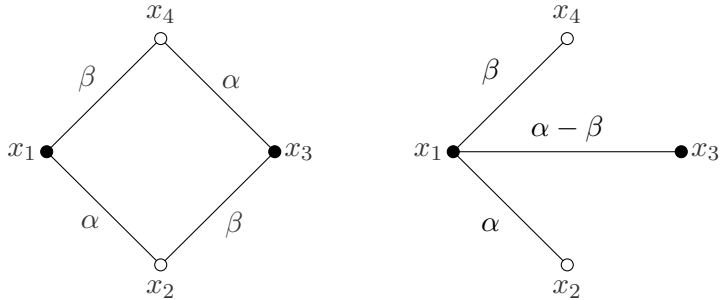
\begin{figure}[htbp] 
\begin{center}
    \begin{tikzpicture}[scale=1.5]
    \def\ptsize{1.5pt}
    \tikzset{label node/.style={inner sep=0, outer sep=.1cm, circle, fill=white, fill opacity=0.8}}
    \coordinate (x1) at (-1,0);
    \coordinate (x2) at (0,-1);
    \coordinate (x3) at (1,0);
    \coordinate (x4) at (0,1);
    \node[label node, anchor=east] at (x1) {$x_1$};
    \node[label node, anchor=north] at (x2) {$x_2$};
    \node[label node, anchor=west] at (x3) {$x_3$};
    \node[label node, anchor=south] at (x4) {$x_4$};
    \draw (x1) -- 
    node[label node, midway, anchor=north east] {$\alpha$} 
    (x2) -- 
    node[label node, midway, anchor=north west] {$\beta$} 
    (x3) -- 
    node[label node, midway, anchor=south west] {$\alpha$} 
    (x4) -- 
    node[label node, midway, anchor=south east] {$\beta$} 
    cycle;
    \foreach \complex in {x1, x3}
    {
      \draw [fill=black] (\complex) circle [radius=\ptsize];
    }
    \foreach \complex in {x2, x4}
    {
      \draw [fill=white] (\complex) circle [radius=\ptsize];
    }
  \end{tikzpicture}
  \hspace{1cm}
  \begin{tikzpicture}[scale=1.5]
    \def\ptsize{1.5pt}
    \tikzset{label node/.style={inner sep=0, outer sep=.1cm, circle, fill=white, fill opacity=0.8}}
    \coordinate (x1) at (-1,0);
    \coordinate (x2) at (0,-1);
    \coordinate (x3) at (1,0);
    \coordinate (x4) at (0,1);
    \node[label node, anchor=east] at (x1) {$x_1$};
    \node[label node, anchor=north] at (x2) {$x_2$};
    \node[label node, anchor=west] at (x3) {$x_3$};
    \node[label node, anchor=south] at (x4) {$x_4$};
    \draw (x1) -- node[midway, anchor=south] {$\alpha-\beta$} (x3);
    \draw (x1) -- node[midway, anchor=south east] {$\beta$} (x4);
    \draw (x1) -- node[midway, anchor=north east] {$\alpha$} (x2);
    \foreach \complex in {x1, x3}
    {
      \draw [fill=black] (\complex) circle [radius=\ptsize];
    }
    \foreach \complex in {x2, x4}
    {
      \draw [fill=white] (\complex) circle [radius=\ptsize];
    }

  \end{tikzpicture}
\end{center}
\caption{Fields at vertices and weights at the edges of an elementary square of a quad-equation. Three-leg form of a quad-equation}
\label{fig:quad-equation}
\end{figure}

The Q4 equation is the master equation in the list. All other integrable equations can be obtained from it by taking appropriate limits. It was found in \cite{Ad98} in the Weierstrass normalization of an elliptic curve, see \cite{AdSu04} for an elaborated treatment. In the Jacobi form this equation appeared in \cite{hietarinta_2005_cac}:
\begin{multline}
  \label{eq:Q4}
  \sn(\alpha;k)(x_1x_2+x_3x_4) \\
  - \sn(\beta;k)(x_1x_4+x_2x_3)
  - \sn(\alpha-\beta;k)(x_1x_3+x_2x_4) \\
  + \sn(\alpha-\beta;k)\sn(\alpha;k)\sn(\beta;k)(1+k^2x_1x_2x_3x_4)=0.
\end{multline}
Here $\sn(\alpha;k)$ is the Jacobi elliptic function of modulus $k$. The periods of the corresponding elliptic integrals are $4K$ and $4i\tilde{K}'$, and 
$$
\tilde{\tau}=i\frac{\tilde{K}'}{K}
$$ 
is the period of the elliptic curve.  Note that here we use tilde notations for the imaginary period $i\tilde{K}'$ and the period $\tilde\tau$.

Quad-equations imply Laplace type equations on a sublattice of $\Z^2$, which is in focus of our interest. 

Recall that 
$$
\sn(X;k)=\frac{\Theta_1(X)}{\sqrt{k}\Theta_4(X)},
$$
where we use an old-fashion normalization of the Jacobi theta functions, see \cite{Ak90}. It is related to a modern one, see \cite{BaEr55}, by an argument scaling:
$$
\Theta_i(X|\tilde{\tau})=\theta_i(\frac{X}{2K}|\tilde{\tau}), \quad i=1,2,3,4.
$$

As it was shown in \cite{bobenko_suris_2010} 
after the change of variables $x_i=\sn(X_i;k)$ the Q4 equation (\ref{eq:Q4}) can be presented in the following three-leg form:
$$
\dfrac{F(X_1,X_2,\alpha)}{F(X_1,X_4,\beta)}=F(X_1,X_3,\alpha-\beta)
$$
with 
\begin{multline} \label{eq:F_original}
F(X,Y,\alpha)=  
\dfrac{
\Theta_2(\frac{1}{2}(X+Y+\alpha)|\tilde{\tau})
\Theta_3(\frac{1}{2}(X+Y+\alpha)|\tilde{\tau})
}
{
\Theta_2(\frac{1}{2}(X+Y-\alpha)|\tilde{\tau})
\Theta_3(\frac{1}{2}(X+Y-\alpha)|\tilde{\tau})
}
\\
\cdot\dfrac{
\Theta_1(\frac{1}{2}(X-Y+\alpha)|\tilde{\tau})
\Theta_4(\frac{1}{2}(X-X+\alpha)|\tilde{\tau})
}
{
\Theta_1(\frac{1}{2}(X-Y-\alpha)|\tilde{\tau})
\Theta_4(\frac{1}{2}(X-Y-\alpha)|\tilde{\tau})
}.
\end{multline}

Three-leg form is closely related to the pluri-Lagrangian structure of the Q4 equation \cite{bobenko_suris_2010}. It turnes out that this structure is the one used for variational description of orthogonal ring patterns in Section~\ref{sec:variational}. 

Using the identities with the elliptic theta functions of the half-period
\begin{align*}
 2\Theta_1(v |\tilde{\tau})\Theta_4(v |\tilde{\tau})=\Theta_2(0 |\frac{\tilde{\tau}}{2})\Theta_1(v |\frac{\tilde{\tau}}{2}), \quad
2\Theta_2(v |\tilde{\tau})\Theta_3(v |\tilde{\tau})=\Theta_2(0 |\frac{\tilde{\tau}}{2})\Theta_2(v |\frac{\tilde{\tau}}{2}),
\end{align*}
 the formula for $F$ can be simplified:
\begin{align*}
F(X,Y,\alpha)=
\dfrac{
\Theta_2(\frac{1}{2}(X+Y+\alpha)|\frac{\tilde{\tau}}{2})
\Theta_1(\frac{1}{2}(X-Y+\alpha)|\frac{\tilde{\tau}}{2})
}
{
\Theta_2(\frac{1}{2}(X+Y-\alpha)|\frac{\tilde{\tau}}{2})
\Theta_1(\frac{1}{2}(X-Y-\alpha)|\frac{\tilde{\tau}}{2})
}.
\end{align*}
Introducing the notations
$$
K':=\frac{\tilde{K}'}{2}, \ \tau:=\frac{\tilde{\tau}}{2}
$$
for the torus with the half period and shifting the variables
$$
X\mapsto X+K,\ Y\mapsto Y+K
$$
we arrive at the following form
\begin{align} 
F(X,Y,\alpha)=
\dfrac{
\Theta_1(\frac{1}{2}(X+Y+\alpha)|\tau)
\Theta_1(\frac{1}{2}(X-Y+\alpha)|\tau)
}
{
\Theta_1(\frac{1}{2}(X+Y-\alpha)|\tau)
\Theta_1(\frac{1}{2}(X-Y-\alpha)|\tau)
}.
\end{align}

For $\alpha=iK'$ one obtains
\begin{align*}
\frac{\Theta_1(\frac{1}{2}(X\pm Y+iK')|\tau)}
{\Theta_1(\frac{1}{2}(X\pm Y-iK')|\tau)}=
i\sqrt{q}e^{-\frac{\pi i}{4K}(X\pm Y)}\sn(\frac{1}{2}(X\pm Y+iK'); q),
\end{align*}
and 
\begin{align*}
F(X,Y,iK')=-\frac{\sn(\frac{1}{2}(X+ Y+iK'); q)}{\sn(\frac{1}{2}(X- Y-iK'); q)} e^{-\frac{\pi i}{2K}X}.
\end{align*}
Similarly,
\begin{align*}
F(X,Y,-iK')=-\frac{\sn(\frac{1}{2}(X+ Y-iK'); q)}{\sn(\frac{1}{2}(X- Y+iK');q)} e^{\frac{\pi i}{2K}X}.
\end{align*}
Here the modulus $q$ corresponds to the torus with the periods $4K, 4iK'$.

\begin{figure}[htbp]
\begin{center}
    \begin{tikzpicture}[scale=2]
    \def\ptsize{1.125pt}
    \tikzset{label node/.style={inner sep=0, outer sep=.1cm, fill=white, fill opacity=0.8}}
    \coordinate (X) at (0,0);
    \coordinate (X1) at (1,1);
    \coordinate (X2) at (-1,1);
    \coordinate (X3) at (-1,-1);
    \coordinate (X4) at (1,-1);
    \coordinate (Y1) at (1,0);
    \coordinate (Y2) at (0,1);
    \coordinate (Y3) at (-1,0);
    \coordinate (Y4) at (0,-1);
    \foreach \complex in {X1, X2, X3, X4, Y1, Y2, Y3, Y4}
    {
      \draw (X) -- (\complex);
    }
    \path (X) -- node[pos=.6, anchor=west] {$\alpha-\beta$} (X1);
    \path (X) -- node[pos=.6, anchor=east] {$\beta-\alpha$} (X2);
    \path (X) -- node[pos=.6, anchor=east] {$\alpha-\beta$} (X3);
    \path (X) -- node[pos=.6, anchor=west] {$\beta-\alpha$} (X4);

    \path (X) -- node[pos=.6, anchor=south] {$\alpha$} (Y1);
    \path (X) -- node[pos=.6, anchor=east] {$\beta$} (Y2);
    \path (X) -- node[pos=.6, anchor=south] {$\alpha$} (Y3);
    \path (X) -- node[pos=.6, anchor=west] {$\beta$} (Y4);

    \node[label node, anchor=north west] at (X) {$X$};
    \node[label node, anchor=south west] at (X1) {$X_1$};
    \node[label node, anchor=south east] at (X2) {$X_2$};
    \node[label node, anchor=north east] at (X3) {$X_3$};
    \node[label node, anchor=north west] at (X4) {$X_4$};
    \node[label node, anchor=west] at (Y1) {$Y_1$};
    \node[label node, anchor=south] at (Y2) {$Y_2$};
    \node[label node, anchor=east] at (Y3) {$Y_3$};
    \node[label node, anchor=north] at (Y4) {$Y_4$};

    \foreach \complex in {X1, X2, X3, X4, X}
    {
      \draw [fill=black] (\complex) circle [radius=\ptsize];
    }

    \foreach \complex in {Y1, Y2, Y3, Y4}
    {
      \draw [fill=white] (\complex) circle [radius=\ptsize];
    }
    
  \end{tikzpicture}
\end{center}
\caption{From the Q4 equation to its Laplace type equation. The weights at the edges are indicated}
 \label{fig:Q4-star}
\end{figure}
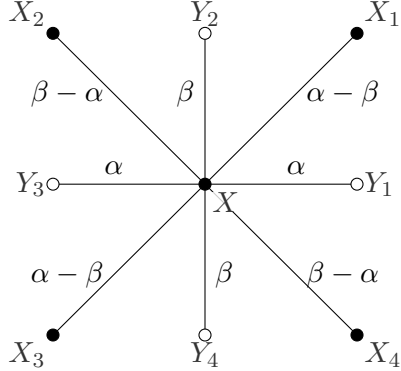

Let us show now how the orthogonal ring pattern equation (\ref{eq:spherical}) appears in this context. Consider a vertex of the $\Z^2$ lattice with its neighbors and next neighbors (see Fig.~\ref{fig:Q4-star}). The weights associated with horizontal and vertical edges are equal to $\alpha$ and $\beta$ respectively. Then the Q4 equations in the 3-leg form read as follows:
\begin{align*}
F(X,X_1,\alpha-\beta)=\frac{F(X,Y_1,\alpha)}{F(X,Y_2,\beta)}, \ 
F(X,X_2,\beta-\alpha)=\frac{F(X,Y_2,\beta)}{F(X,Y_3,\alpha)}\\
F(X,X_3,\alpha-\beta)=\frac{F(X,Y_3,\alpha)}{F(X,Y_4,\beta)}, \ 
F(X,X_4,\beta-\alpha)=\frac{F(X,Y_4,\beta)}{F(X,Y_1,\alpha)}.
\end{align*}
In the product the terms with $Y_i$ cancel, and we become the Laplace type equation in the multiplicative form
\begin{align}\label{eq:Laplace_F}
F(X,X_1,\alpha-\beta)
F(X,X_2,\beta-\alpha)
F(X,X_3,\alpha-\beta)
F(X,X_4,\beta-\alpha)=1.
\end{align}
For $\alpha-\beta=iK'$ the equation becomes
\begin{align}\label{eq:Laplace_preliminary}
&\frac{\sn(\frac{1}{2}(X+ X_1+iK'))}{\sn(\frac{1}{2}(X- X_1-iK'))}
\frac{\sn(\frac{1}{2}(X+ X_2-iK'))}{\sn(\frac{1}{2}(X- X_2+iK'))}\times \\
&\frac{\sn(\frac{1}{2}(X+ X_3+iK'))}{\sn(\frac{1}{2}(X- X_3-iK'))}
\frac{\sn(\frac{1}{2}(X+ X_4-iK'))}{\sn(\frac{1}{2}(X- X_4+iK'))}=1.\nonumber
\end{align}
This equation is on a square grid $\Z^2$ which is a sublattice of the original square grid of the Q4 equation. To identify this equation with (\ref{eq:spherical}) let us color, for example, vertical  lines of the grid of the Laplace system (\ref{eq:Laplace_preliminary}) alternatively black and white. The variables at black and white lines will be shifted in  opposite way: $Y\mapsto Y+iK'$ for black and $Y\mapsto Y-iK'$ for white vertices. If the vertices $X,X_2,X_4$ are on a black line, then the shifts in (\ref{eq:Laplace_preliminary}) are
$$
X\mapsto X+iK', X_{2,4}\mapsto X_{2,4}+iK', X_{1,3}\mapsto X_{1,3}-iK'.
$$
Using simple transformation identities for $\sn(X)$ function we obtain
\begin{align}\label{eq:Laplace_final}
&\frac{\sn(\frac{1}{2}(X+ X_1+iK');q)}{\sn(\frac{1}{2}(X- X_1+iK');q)}
\frac{\sn(\frac{1}{2}(X+ X_2+iK');q)}{\sn(\frac{1}{2}(X- X_2+iK');q)}\times \\
&\frac{\sn(\frac{1}{2}(X+ X_3+iK');q)}{\sn(\frac{1}{2}(X- X_3+iK');q)}
\frac{\sn(\frac{1}{2}(X+ X_4+iK');q)}{\sn(\frac{1}{2}(X- X_4+iK');q)}=1.\nonumber
\end{align}
This equation coincides with (\ref{eq:spherical}). One can easily verify that the same equation one obtains if the variables $X,X_2,X_4$ are on a white line. 

\begin{theorem} \label{th:LaplaceQ4_identification}
Equation (\ref{eq:spherical}), as well as (\ref{eq:hyperbolic}), describing the radii of the spherical and hyperbolic orthogonal ring patterns is a special case of the Laplace equation (\ref{eq:Laplace_F}) for Q4 discrete integrable equation on the square grid with parameter $\alpha-\beta$ on the edge equal to $iK'$.
\end{theorem}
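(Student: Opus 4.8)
The plan is to chain together the identities established above. The point of departure is the three-leg form of the Q4 equation from \cite{bobenko_suris_2010}, $F(X_1,X_2,\alpha)/F(X_1,X_4,\beta)=F(X_1,X_3,\alpha-\beta)$, with $F$ in the simplified form (on the torus with periods $4K$ and $4iK'$) derived just before the theorem. First I would write, for a vertex $X$ of $\Z^2$ and its star as in Fig.~\ref{fig:Q4-star}, the four three-leg relations, assigning weight $\alpha$ to the horizontal edges and $\beta$ to the vertical ones; then the diagonal legs $F(X,X_1,\cdot),\dots,F(X,X_4,\cdot)$ carry, alternately around the star, the weights $+(\alpha-\beta)$ and $-(\alpha-\beta)$, while the legs to the nearest neighbours $Y_1,\dots,Y_4$ carry $\alpha$ or $\beta$. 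Since only the difference $\alpha-\beta$ enters the final equation, we are free to impose $\alpha-\beta=iK'$.

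Multiplying the four relations, each factor $F(X,Y_i,\cdot)$ occurs once in a numerator and once in a denominator, so all of them cancel and one is left with the Laplace-type identity (\ref{eq:Laplace_F}). Setting $\alpha-\beta=iK'$ turns the diagonal legs into $F(X,X_1,iK')$, $F(X,X_2,-iK')$, $F(X,X_3,iK')$, $F(X,X_4,-iK')$, and I would substitute the explicit expressions for $F(X,Y,\pm iK')$ computed just above. The four prefactors $-1$ multiply to $+1$, and the exponential prefactors $e^{\mp\pi iX/2K}$ occur twice with each sign and therefore cancel as well; what remains is precisely equation (\ref{eq:Laplace_preliminary}), a product of four $\sn$-quotients equal to $1$, living on the sublattice of $\Z^2$ spanned by the diagonal directions.

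The remaining task is to remove the alternating signs of $iK'$ in (\ref{eq:Laplace_preliminary}). I would two-colour the vertical lines of this sublattice and pass to new fields by the substitution $Y\mapsto Y+iK'$ on the black lines and $Y\mapsto Y-iK'$ on the white ones. Because the two diagonal neighbours whose legs carry $-iK'$ sit on lines of the colour opposite to $X$ and to the two diagonal neighbours whose legs carry $+iK'$, this relabelling makes all eight half-sum arguments acquire the same $+iK'$; the occasional argument that emerges as $\tfrac{1}{2}(u\pm 3iK')$ is reduced to $\tfrac{1}{2}(u+iK')$ by the periodicity $\sn(v+2iK')=\sn(v)$. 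The result is equation (\ref{eq:Laplace_final}), which is verbatim the spherical governing equation (\ref{eq:spherical}); one then checks that placing the central vertex on a white line instead gives the same equation. Finally, by Theorem~\ref{thm:hyperbolic} the hyperbolic governing equation (\ref{eq:hyperbolic}) is the \emph{same} complex equation, expressed in the variables $\beta=\gamma+iK'$, hence it too is a special case of (\ref{eq:Laplace_F}) with edge parameter $\alpha-\beta=iK'$.

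The only delicate step is the bookkeeping in the third paragraph: one has to check that the two-colouring yields a globally consistent relabelling of the fields (it does, as the colour depends only on the column index modulo $2$), that a single global $iK'$-shift simultaneously neutralises the signs coming from all four diagonal directions, and that no multiplicative constant is lost in reducing the shifted arguments via the periodicity of $\sn$. The cancellations in the first two paragraphs — of the $Y_i$-legs, and of the signs and exponentials once $\alpha-\beta=iK'$ is imposed — are routine.
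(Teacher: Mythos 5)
Your proposal is correct and follows essentially the same route as the paper: the four three-leg relations around the star, cancellation of the $Y_i$-legs to obtain the multiplicative Laplace equation, specialization to $\alpha-\beta=iK'$ with the explicit formulas for $F(X,Y,\pm iK')$, and the alternating $\pm iK'$ shift on black/white vertical lines to reach the spherical equation, with the hyperbolic case covered by the identification $\beta=\gamma+iK'$. Your extra remarks on the cancellation of the sign and exponential prefactors and on the consistency of the two-colouring only make explicit what the paper leaves as ``simple transformation identities.''
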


Note that the modulus $k$ in (\ref{eq:Q4}) corresponds to the torus with the periods $4K, 4i\tilde{K}'$, which is the double cover of the torus with the periods $4K,4iK'$, corresponding to the modulus $q$ in (\ref{eq:Laplace_final}); $\tilde{K}'=2K'$.

\section{Smooth limit. Harmonic maps and elliptic sinh-Gordon equations}
\label{sec:smooth}

We describe the limit of small rings, smoothly varying in space, and demonstrate that in this limit orthogonal ring patterns converge to the corresponding harmonic maps.

\subsection*{In the sphere} 
First, let us consider the case of spherical ring patterns. If all rings are small then all $\beta$'s lie in the interval $[0,2K]$. Introducing $u:=K-\beta \in[-K,K]$ we obtain from \eqref{eq:sphere_ring_uniformization}
$$
\sin r=\cn \beta=q'\frac{\sn u}{\dn u},\quad \sin R=\dn \beta = \frac{q'}{\dn u}, \quad q'=\sqrt{1-q^2}.
$$
In the limit 
\begin{equation}
\label{eq:smooth_limit_q}
q'=\epsilon, \quad \epsilon\to 0, 
\end{equation}
we have small rings with the radii, see \eqref{eq:Jacobi_q=1}:
\begin{equation}
\label{eq:smooth_radii}
r\to q'\sinh u, \quad R\to q' \cosh u
\end{equation}
and the area $\pi(R^2-r^2)\to  q'^2$.

After simple transformations we obtain for the ingredients of \eqref{eq:spherical}:
\begin{eqnarray*}
\sn (\frac{\beta+\beta_k+iK'}{2})=(1+q)\sn (K -\frac{u_k+u}{2})\left( 
1+i\frac{q'^2}{1+q}\frac{\sn}{\cn\dn}(\frac{u_k+u}{2}) \right),\\
\sn (\frac{\beta-\beta_k+iK'}{2})=i \cn (\frac{u_k-u}{2}) \dn(\frac{u_k-u}{2})\left( 
1-i(1+q)\frac{\sn}{\cn\dn}(\frac{u_k-u}{2}) \right).
\end{eqnarray*}
In the smooth limit when the radii of the rings are assumed to depend on the ring coordinate smoothly we have:
\begin{equation}
\label{eq:smooth_limit_u}
u_k-u= \mathcal{O}(\epsilon), \quad \sum_{k=1}^4 (u_k-u)=\mathcal{O}(\epsilon^2),
\end{equation}
and equation \eqref{eq:spherical} becomes
\begin{equation}
\label{eq:limit_args}
\arg \prod_{k=1}^4\left(1+i \frac{q'^2}{1+q}\frac{\sn}{\cn\dn}(\frac{u_k+u}{2})\right)=
\arg \prod_{k=1}^4\left(1-i (1+q)\frac{\sn}{\cn\dn}(\frac{u_k-u}{2})\right).
\end{equation}
In the limit \eqref{eq:smooth_limit_q}, \eqref{eq:smooth_limit_u} this finally yields
\begin{equation}
\label{eq: Laplace u + sinh u=0}
\Delta u + \sinh 2u=0.
\end{equation}
Here we used 
$$
\frac{\sn}{\cn\dn}(x, q=1)=\frac{1}{2}\sinh 2x, \ \text{and} \ \sum_{k=1}^4\sinh(u_k-u)\to \sum_{k=1}^4(u_k-u)\to \Delta u,\  \epsilon\to 0,
$$
where $\Delta=\partial_x^2+\partial_y^2 $ is a properly normalized Laplace operator.

\subsection*{In the hyperbolic plane} 
For ring patterns in the hyperbolic plane the derivation of the smooth limit is similar.
 Introducing $u:=\gamma -K \in[-K,K]$ we obtain from \eqref{eq:hyperbolic_ring_uniformization_beta}, \eqref{eq:hyperbolic_ring_uniformization_gamma}
$$
\sinh r=i \dn (u+K+iK')=-q'\frac{\sn u}{\cn u},\quad \sinh R=i\cn (u+K+iK') = \frac{q'}{q\cn u}.
$$
In the limit \eqref{eq:smooth_limit_q} we again obtain small rings with the radii \eqref{eq:smooth_radii} and the area $q'^2$. Repeating the calculation in the spherical case we obtain from \eqref{eq:hyperbolic}: 
$$
\arg \prod_{k=1}^4 \left( 1-i \frac{q'^2}{1+q}\frac{\sn}{\cn\dn}(\frac{u_k+u}{2})\right)=
\arg \prod_{k=1}^4 \left( 1-i (1+q)\frac{\sn}{\cn\dn}(\frac{u_k-u}{2})\right).
$$
This equation differs from \eqref{eq:limit_args} just by one sign. In the smooth limit of small rings \eqref{eq:smooth_limit_q}, \eqref{eq:smooth_limit_u} we finally obtain:
\begin{equation}
\label{eq: Laplace u - sinh u=0}
\Delta u - \sinh 2u=0.
\end{equation}
We have shown that the difference equations \eqref{eq:spherical} and \eqref{eq:hyperbolic_ring_uniformization_beta} provide an approximation of the partial differential equations \eqref{eq: Laplace u + sinh u=0} and \eqref{eq: Laplace u - sinh u=0} in the usual sense of numerical mathematics, see for example \cite{Sa01}.  
\begin{theorem}
\label{the:smooth_limit}
Discrete equations \eqref{eq:spherical} and \eqref{eq:hyperbolic_ring_uniformization_beta} with $\beta=K-u$ and $\gamma=K-u$ in the limit \eqref{eq:smooth_limit_q}, \eqref{eq:smooth_limit_u} approximate the elliptic sinh-Gordon equations \eqref{eq: Laplace u + sinh u=0} and \eqref{eq: Laplace u - sinh u=0} 
respectively. If $u(x,y)$ is a solution of \eqref{eq: Laplace u + sinh u=0} or \eqref{eq: Laplace u - sinh u=0}, then the rings with the radii 
\begin{eqnarray*}
 r_{m,n}=\epsilon \sinh u(x,y), r_{m\pm 1,n}=\epsilon \sinh u(x\pm \epsilon,y), r_{m,n\pm 1}=\epsilon \sinh u(x,y\pm \epsilon), \\
 R_{m,n}=\epsilon \cosh u(x,y), R_{m\pm 1,n}=\epsilon\cosh u(x\pm \epsilon,y), R_{m,n\pm 1}=\epsilon \cosh u(x,y\pm \epsilon), 
\end{eqnarray*}
 satisfy the spherical or hyperbolic ring equations \eqref{eq:spherical} or \eqref{eq:hyperbolic_ring_uniformization_beta} respectively up to $\mathcal{O}(\epsilon)$. They form infinitesimal spherical or hyperbolic ring flowers (see Sect.~\ref{sec:ring_patterns}).
\end{theorem}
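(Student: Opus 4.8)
\emph{Proof plan.}\ The plan is to upgrade the asymptotic computation carried out above into quantitative estimates; both assertions then follow by reading off the leading order. I will describe the spherical case, the hyperbolic one being identical up to the single sign difference already visible between \eqref{eq:limit_args} and its hyperbolic counterpart. First I would substitute $\beta=K-u$ into the uniformization \eqref{eq:sphere_ring_uniformization}, so that the radii become $\sin r=q'\sn u/\dn u$, $\sin R=q'/\dn u$, and impose the scaling $q'=\epsilon$ of \eqref{eq:smooth_limit_q}; together with the $q\to1$ degenerations \eqref{eq:Jacobi_q=1} this gives the radii asymptotics \eqref{eq:smooth_radii}, which are exactly the sampled radii appearing in the statement. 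Next I would rewrite each factor $\sn(\tfrac{\beta\pm\beta_k+iK'}{2})$ by the addition theorem and \eqref{eq:iK'/2} in the factorized form used just before \eqref{eq:limit_args}: a ``difference'' factor carrying $1-i(1+q)\tfrac{\sn}{\cn\dn}(\tfrac{u_k-u}{2})$ and a ``sum'' factor carrying $1+i\tfrac{q'^2}{1+q}\tfrac{\sn}{\cn\dn}(\tfrac{u_k+u}{2})$, the remaining pieces being positive real and cancelling in the product.

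The heart of the argument is the order count. By Theorem~\ref{thm:spherical} the left-hand side of \eqref{eq:spherical} is a positive real number, so at an interior vertex \eqref{eq:spherical} is equivalent to the vanishing of its argument, i.e.\ to \eqref{eq:limit_args}, and I would estimate each side separately. The sum-factors contribute $\sum_{k=1}^4\tfrac{q'^2}{1+q}\tfrac{\sn}{\cn\dn}(\tfrac{u_k+u}{2})+\mathcal{O}(\epsilon^6)$; since $q'=\epsilon$, $q\to1$, $u_k\to u$ and $\tfrac{\sn}{\cn\dn}(x,1)=\tfrac12\sinh 2x$, this is $\epsilon^2\sinh 2u+o(\epsilon^2)$. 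The difference-factors contribute $-\sum_{k=1}^4(1+q)\tfrac{\sn}{\cn\dn}(\tfrac{u_k-u}{2})+\mathcal{O}(\epsilon^3)=-\sum_{k=1}^4\sinh(u_k-u)+o(\epsilon^2)$; expanding $\sinh$ and invoking the smoothness hypotheses \eqref{eq:smooth_limit_u}, so that $\sum_k(u_k-u)=\epsilon^2\Delta u+\mathcal{O}(\epsilon^4)$ (the cubic stencil sum cancelling by symmetry), this is $-\epsilon^2\Delta u+o(\epsilon^2)$, with $\Delta$ normalized to the lattice spacing $\epsilon$. Equating the two sides of \eqref{eq:limit_args}, dividing by $\epsilon^2$ and letting $\epsilon\to0$ then gives $\Delta u+\sinh 2u=0$, i.e.\ \eqref{eq: Laplace u + sinh u=0}. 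For the hyperbolic case I would use Theorem~\ref{thm:hyperbolic}, the substitution $\gamma=K-u$ and \eqref{eq:hyperbolic_ring_uniformization_gamma} in place of \eqref{eq:sphere_ring_uniformization}; the corresponding factorization of the terms of \eqref{eq:hyperbolic} differs only in the sign of the sum-factor, which flips the sign of $\sinh 2u$ and yields \eqref{eq: Laplace u - sinh u=0}.

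For the converse and the ring-flower statement, I would take a $C^\infty$ solution $u$ of \eqref{eq: Laplace u + sinh u=0} (resp.\ \eqref{eq: Laplace u - sinh u=0}), define the ring radii at $v_{m,n}$ and its four neighbours by sampling $u$ as in the theorem (noting that by \eqref{eq:smooth_radii} these agree with the exact radii of a ring of parameter $q=\sqrt{1-\epsilon^2}$ up to $\mathcal{O}(\epsilon^3)$, within the claimed accuracy), and run the expansion above in reverse: the left-hand side of \eqref{eq:spherical} (resp.\ \eqref{eq:hyperbolic}) at each interior vertex differs from $1$ only through the two $\mathcal{O}(\epsilon^2)$ contributions just computed, whose sum is $\epsilon^2(\Delta u+\sinh 2u)+o(\epsilon^2)=o(\epsilon^2)$ since $u$ solves the equation; in particular the residual is $\mathcal{O}(\epsilon)$, which is the asserted accuracy. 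Equivalently, the cone-angle identity \eqref{eq:angle_internal_beta} holds at each interior vertex up to an error tending to $0$ with $\epsilon$. Hence in the limit $\epsilon\to0$ conditions (1) and (2) of Definition~\ref{def:ring_pattern} hold around $v_{m,n}$, i.e.\ the four neighbouring rings close up into an infinitesimal ring flower.

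I expect the main obstacle to be exactly this bookkeeping of orders. The delicate point is that the balance in \eqref{eq:limit_args} takes place two orders below the naive size of the terms: the difference-factors are individually $1+\mathcal{O}(\epsilon)$, and only the hypothesis $\sum_k(u_k-u)=\mathcal{O}(\epsilon^2)$, together with the vanishing $\sum_k(u_k-u)^3=\mathcal{O}(\epsilon^4)$ forced by the symmetry of the five-point stencil, promotes their combined argument contribution to the genuine order $\epsilon^2$ --- matching the order $\epsilon^2$ at which the sum-factors, suppressed by the explicit prefactor $q'^2$, first contribute. A secondary point is to make the $q\to1$ (equivalently $K\to\infty$) limits of $\sn,\cn,\dn$ and of $\tfrac{\sn}{\cn\dn}$ uniform over the bounded range of $u$-values occurring, so that all $\mathcal{O}(\cdot)$ estimates are uniform over the vertices of the pattern; this follows from \eqref{eq:Jacobi_q=1} together with the first-order corrections to these degenerations.
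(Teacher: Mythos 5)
Your proposal is correct and follows essentially the same route as the paper: the substitution $\beta=K-u$, the factorization of $\sn(\tfrac{\beta\pm\beta_k+iK'}{2})$ into a real positive part times $1+i(\cdot)$, the reduction of \eqref{eq:spherical} to the argument identity \eqref{eq:limit_args}, and the order count yielding $\epsilon^2(\Delta u+\sinh 2u)$ at leading order, with the single sign flip in the sum-factor producing the hyperbolic case. The extra bookkeeping you supply (cancellation of the cubic stencil sum, uniformity of the $q\to 1$ degenerations) only sharpens the paper's derivation and the reverse-sampling argument for the second claim, so no gap remains.
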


Note that both equations \eqref{eq: Laplace u + sinh u=0} and \eqref{eq: Laplace u - sinh u=0} are variational with the functionals
$$
S(u)=\int |\nabla u |^2\pm \cosh u,
$$
and, in agreement with the discrete case, the hyperbolic functional is convex.

\subsection*{Relation to harmonic maps and to constant mean curvature surfaces}

Elliptic sinh-Gordon equations \eqref{eq: Laplace u + sinh u=0} and \eqref{eq: Laplace u - sinh u=0}  describe cmc surfaces in $\R^3$ and $\R^{2,1}$ respectively. The function $u$ represents the conformal metric $e^{2u}$ of the surface.  The Gauss maps $N$ of these surfaces are harmonic map to the two-sphere $S^2$ and to the hyperbolic plane, see for example \cite{BHeSch19}. 

From considerations of the smooth limit in this section we conclude that orthogonal ring patters can be interpreted as integrable discretizations of harmonic maps to the two-sphere and to the hyperbolic plane. The corresponding discrete theory is developed in \cite{BHoSm24}, where discrete cmc surfaces are defined and constructed using orthogonal ring patterns. The surfaces are built by touching discs. In this context the radii of rings describe the conformal metric of the corresponding discrete cmc-surface. The Gauss map $N$ of a discrete cmc surface appears here in the form of so called two-spheres Koebe Q-net, see Fig.~\ref{fig:two-spheres_Koebe}, which we shortly describe below.

Let us decompose $G=G'\cup G''$ into two sublattices (recall that the vertices $(m,n)\in G$ satisfy $m+n=0 \ ({\rm mod} \ 2)$ 
$$
G'=\{ (m,n)\in G | m=0 \ ({\rm mod} \ 2) \}, \ G'=\{ (m,n)\in G | m=1 \ ({\rm mod} \ 2) \}.
$$
The lattices $G'$ and $G''$ are dual to each other; the faces $[(m,n), (m+2,n), (m+2,n+2), (m,n+2)] $ of $G'$ are dual to the vertices $(m+1,n+1)$ of $G''$ and vice versa. 

Recall that polyhedral surfaces in ${\mathbb R}^3$ with planar quadrilateral faces are called Q-nets \cite{BSu-DDG}.
Two Q-nets $p':G'\to {\mathbb R}^3$ and$p'':G''\to {\mathbb R}^3$ are called {\em a pair of dual two-spheres Koebe Q-nets} if they satisfy the following conditions:
\begin{enumerate}
\item $p'$ and $p''$ are combinatorially dual and vertex vectors of $p'$  build normals of the corresponding faces of $p''$ and vice versa.
\item The edges of face quadrilaterals $[p_{m,n}, p_{m+2,n}, p_{m+2,n+2}, p_{m,n+2}]$
of $p'$ and $p''$ alternatively touch two concentric spheres $S^2_\pm$:
the lines $(p_{m,n}, p_{m+2,n})$  touch $S^2_-$, and 
the lines $(p_{m,n}, p_{m,n+2})$  touch $S^2_+$.
\item The dual edges are orthogonal 
\begin{equation}
(p_{m,n}, p_{m+2,n})\perp (p_{m+1,n-1}, p_{m+1,n+1}),
\label{eq:orthogonality_dual}
\end{equation}
 and their touching points $l_{m+1,n}^+\in S^2_+$ and $l_{m+1,n}^-\in S^2_-$ are projected to the same point on the unit sphere.
\end{enumerate} 
An example of a pair of dual two-spheres Koebe Q-nets is shown in Fig.~\ref{fig:two-spheres_Koebe}.

Pairs of dual two-spheres Koebe Q-nets are in one to one correspondence to orthogonal ring patterns in $S^2$, see \cite{BHoSm24} for detail. Combined to one map $p:G\to \R^3, p'=p_{| G'}, p''=p_{| G''}$ they can be obtained by an appropriate scaling of the ring centers $k_{m,n}\in S^2$:
$$
p_{m,n}=\frac{1}{\sqrt{q}\cos r_{m,n}} k_{m,n}=\frac{\sqrt{q}}{\cos R_{m,n}}k_{m,n}.
$$ 
The concentric spheres $S_\pm^2$ have the radii
$$
r_+=\frac{1}{\sqrt{q}} \ \ \text{and}\ r_-=\sqrt{q}.
$$
Let us remark that the orthogonality condition (\ref{eq:orthogonality_dual}) was previously introduced in \cite{BSST_2020} in the context of discrete confocal quadrics, and later was elaborated in the theory of orthogonal binets \cite{AfTe24}.

\begin{figure}[htbp] 
\begin{center}
\includegraphics[width=0.8\textwidth]{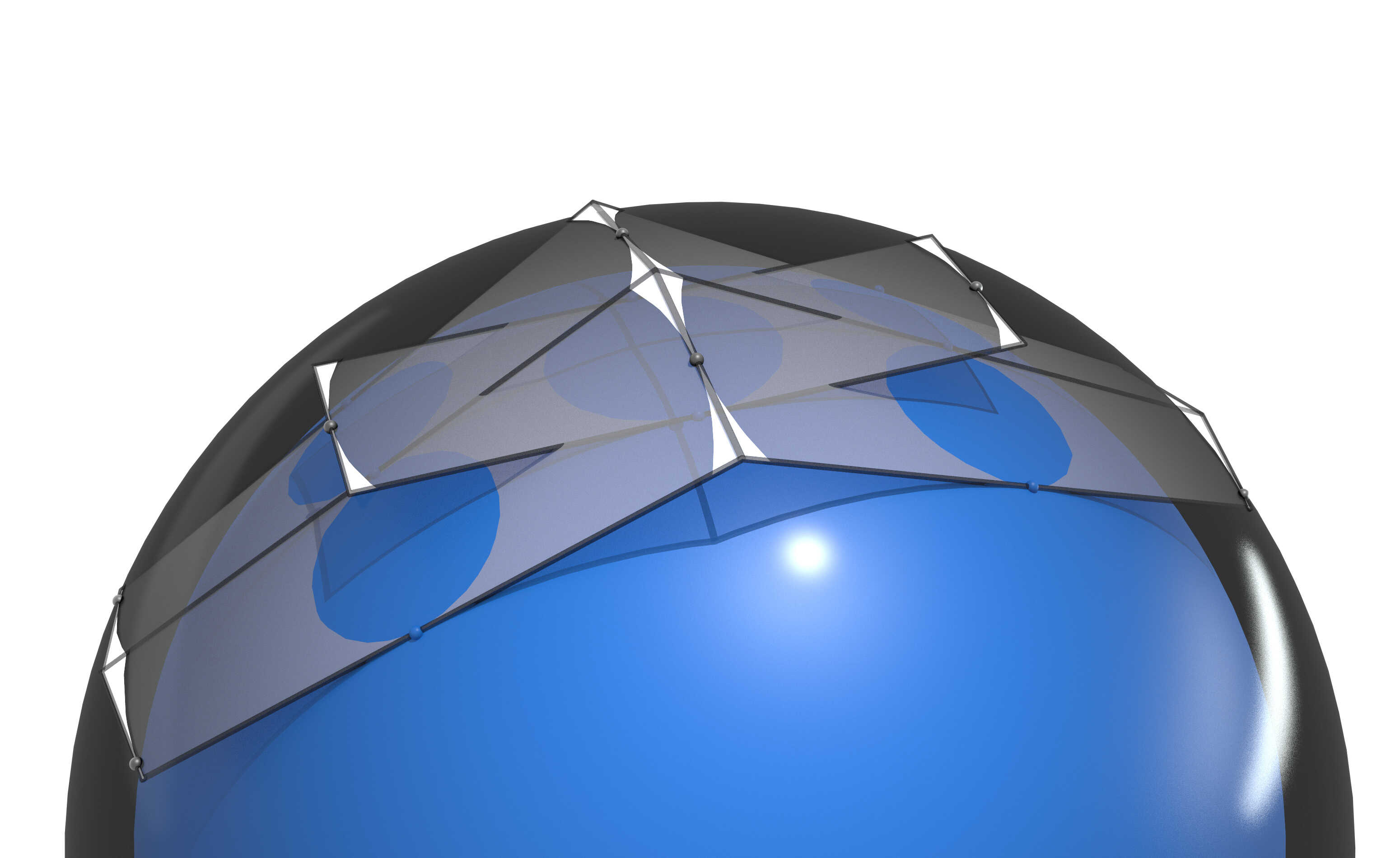}
\end{center}
\caption{A pair of dual two-spheres Koebe Q-nets obtained from an orthogonal ring pattern in $S^2$.}
\label{fig:two-spheres_Koebe}
\end{figure}

In the circle patterns case $q=1$ two spheres $S_\pm^2$ coincide with $S^2$, and one obtains classical Koebe polyhedra, all edges of which touch a sphere. The corresponding theory of discrete minimal surfaces constructed from orthogonal circle patters was developed in \cite{BHoSp06}. The construction steps are as follows: an orthogonal circle pattern in $S^2$ $\Rightarrow$  Koebe polyhedron $\Rightarrow$ discrete minimal surface. The convergence to the smooth minimal surfaces was proven using the convergence results for circle patterns \cite{Sch97, HeSch98, Bu08}. In the case of minimal surfaces the elliptic sinh-Gordon equation \eqref{eq: Laplace u + sinh u=0} degenerates to the Liouville equation $\Delta u=\frac{1}{2}e^{-u}$. The construction of discrete cmc surfaces in \cite{BHoSm24} is analogous: an orthogonal ring pattern in $S^2$ $\Rightarrow$  two-spheres Koebe polyhedron $\Rightarrow$ discrete cmc surface. Numerical experiments with ring patterns show good convergence to smooth surfaces and maps, and we expect for them a convergence behavior similar to the circle patterns and minimal surfaces. Note that orthogonal double circle patterns on the sphere have been applied in architectural geometry \cite{THDB18} for construction of doubly-curved building envelopes.

Cmc-surfaces in $\R^3$ are isometric to minimal surfaces in $S^3$. This fact, known as the Lawson correspondence \cite{La70}, can be lifted to a relation for the frames of the corresponding surfaces \cite{KGBPo97}. The Lawson correspondence is a powerful method to investigate embedded cmc-surfaces, by constructing the corresponding minimal surfaces in $S^3$, see for example \cite{GBKuSu03}. Analogously, cmc-surfaces in $\R^{2,1}$ are isometric to minimal surfaces in three-dimensional anti-de Sitter space $AdS_3$, which are popular models in the string theory.
First attempts to discretize the Lawson correspondence were carried out in \cite{BR_Lawson} without relation to ring patters. We plan to develop further the results of \cite{BHoSm24} and to find a discrete Lawson correspondence for ring patterns, which should be a geometric construction of discrete minimal surfaces in $S^3$ and $AdS_3$ from ring patterns in the sphere and hyperbolic plane. In particular, this will provide us an integrable model of discrete strings in $AdS_3$ with a convex variational principle. The latter is a well established tool to handle existence and uniqueness results.

\bibliographystyle{acm}
\bibliography{rings_noneucl}

\end{document}